\documentclass{amsart}
\usepackage{amssymb}
\usepackage{graphicx}

\theoremstyle{plain}
\newtheorem{theorem}{Theorem}[section]
\newtheorem{corollary}[theorem]{Corollary}

\newtheorem{proposition}[theorem]{Proposition}
\newtheorem{lemma}[theorem]{Lemma}
\newtheorem{conjecture}[theorem]{Conjecture}
\numberwithin{equation}{section}

\begin{document}

\title[Sums of the floor function]{Sums of the floor function related to class 
numbers of imaginary quadratic fields}

\author{Marc Chamberland}
\address{Department of Mathematics and Statistics, Grinnell College,
Grinnell, IA 50112, USA}
\email{chamberl@math.grinnell.edu}
\author{Karl Dilcher}
\address{Department of Mathematics and Statistics,
         Dalhousie University,
         Halifax, Nova Scotia, B3H 4R2, Canada}
\email{dilcher@mathstat.dal.ca}
\keywords{Floor function, quadratic residue, class number, imaginary quadratic
field}
\subjclass[2010]{Primary 11A25; Secondary 11R11, 11R29}
\thanks{Research supported in part by the Natural Sciences and Engineering
        Research Council of Canada, Grant \# 145628481}

\date{}

\setcounter{equation}{0}

\begin{abstract}
A curious identity of Bunyakovsky (1882), made more widely known by P\'olya and
Szeg{\H o} in their ``Problems and Theorems in Analysis", gives an evaluation
of a sum of the floor function of square roots involving primes
$p\equiv 1\pmod{4}$. We evaluate this sum also in the case $p\equiv 3\pmod{4}$,
obtaining an identity in terms of the class number of the imaginary quadratic
field ${\mathbb Q}(\sqrt{-p})$. We also consider certain cases where the prime
$p$ is replaced by a composite integer. Class numbers of imaginary quadratic
fields are again involved in some cases.

\end{abstract}

\maketitle

\section{Introduction}\label{sec:1}

The {\it floor function}, also known as the {\it greatest integer function\/}, 
of $x\in{\mathbb R}$ is the integer $\lfloor{x}\rfloor$ defined by
$\lfloor{x}\rfloor\leq x < \lfloor{x}\rfloor+1$.
Sums involving the floor function have a long history and have been extensively
studied. One of the best known sums of this type is
\begin{equation}\label{1.1}
\lfloor{x}\rfloor+\left\lfloor{x+\tfrac{1}{n}}\right\rfloor+
\left\lfloor{x+\tfrac{2}{n}}\right\rfloor
+\cdots+\left\lfloor{x+\tfrac{n-1}{n}}\right\rfloor = \lfloor{nx}\rfloor,
\end{equation}
where $n$ is a positive integer. The identity \eqref{1.1} is
due to Hermite \cite[p.~315]{He}; see also \cite[p.~85]{GKP} or 
\cite[Part~8, Problem~9]{PS2}.

A different type of such a sum, involving square roots, is
\begin{equation}\label{1.2}
1+\lfloor{\sqrt{2}}\rfloor+\lfloor{\sqrt{3}}\rfloor
+\cdots+\lfloor{\sqrt{n-1}}\rfloor = na-\tfrac{1}{3}a(a+\tfrac{1}{2})(a+1),
\end{equation}
where $a=\lfloor{\sqrt{n}}\rfloor$; see \cite[p.~87]{GKP}. A somewhat related
identity can be found in the famous two-volume ``Problems and Theorems in
Analysis" by P\'olya and Szeg{\H o} \cite{PS2} as Problem~20 in Part~8: if 
$p$ is a prime of the form $4n+1$, then
\begin{equation}\label{1.3}
\left\lfloor{\sqrt{p}}\right\rfloor+\left\lfloor{\sqrt{2p}}\right\rfloor+
\left\lfloor{\sqrt{3p}}\right\rfloor
+\cdots+\left\lfloor{\sqrt{\tfrac{p-1}{4}p}}\right\rfloor = \frac{p^2-1}{12}.
\end{equation}
P\'olya and Szeg{\H o} attribute this identity to Bouniakowski \cite{Bou} 
(better known as V.~Ya.~Bunyakovsky), and
in \cite{PS2} it is the last of several exercises that use the technique of
counting lattice points. A different method was used by Shirali in \cite{Sh}, 
which has been the main inspiration for the current paper. 

The identity \eqref{1.3} leads us to consider the arithmetic function
\begin{equation}\label{1.4}
f(n):=\sum_{j=1}^{\lfloor{n/4}\rfloor}\left\lfloor{\sqrt{jn}}\right\rfloor
-\frac{n^2-1}{12}\qquad(n\in{\mathbb N}).
\end{equation}
We can therefore rewrite \eqref{1.3} as follows.

\begin{proposition}[Bouniakowski, 1882]\label{prop:1.1}
For any  prime $p\equiv1\pmod{4}$ we have
\begin{equation}\label{1.5}
f(p)=0.
\end{equation}
\end{proposition}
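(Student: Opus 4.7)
The plan is to reinterpret the sum on the left of \eqref{1.5} as a lattice-point count. Setting $N=(p-1)/4$, I would use that $\lfloor\sqrt{jp}\rfloor$ equals the number of positive integers $k$ with $k^{2}\le jp$, so
$$
\sum_{j=1}^{N}\lfloor\sqrt{jp}\rfloor
 =\#\{(j,k)\in{\mathbb N}^{2}:\ 1\le j\le N,\ k^{2}\le jp\}.
$$
Interchanging the order of summation, and noting that $k^{2}\le Np=p(p-1)/4$ forces $k\le(p-1)/2$, I would rewrite this count as
$$
\sum_{k=1}^{(p-1)/2}\bigl(N-\lceil k^{2}/p\rceil+1\bigr)
 =\sum_{k=1}^{(p-1)/2}\bigl(N-\lfloor k^{2}/p\rfloor\bigr),
$$
where the equality uses the fact that $p\nmid k$ implies $k^{2}/p\notin{\mathbb Z}$.

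The leading term contributes $N\cdot(p-1)/2=(p-1)^{2}/8$, so the proof reduces to evaluating $S:=\sum_{k=1}^{(p-1)/2}\lfloor k^{2}/p\rfloor$. Writing $\lfloor k^{2}/p\rfloor=(k^{2}-(k^{2}\bmod p))/p$ and invoking the standard identity $\sum_{k=1}^{(p-1)/2}k^{2}=p(p^{2}-1)/24$, the problem becomes the evaluation of $T:=\sum_{k=1}^{(p-1)/2}(k^{2}\bmod p)$. As $k$ runs over $\{1,\dots,(p-1)/2\}$, the values $k^{2}\bmod p$ are precisely the nonzero quadratic residues modulo $p$, each taken exactly once, so $T$ is the sum of the nonzero quadratic residues in $\{1,\dots,p-1\}$.

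The hypothesis $p\equiv1\pmod4$ enters at exactly this point: it implies that $-1$ is a quadratic residue modulo $p$, so $r\mapsto p-r$ permutes the set of nonzero quadratic residues. Pairing each $r$ with $p-r$ gives $T=p(p-1)/4$. Substituting back produces $S=(p-1)(p-5)/24$, and a short simplification yields $(p-1)^{2}/8-S=(p^{2}-1)/12$, which is \eqref{1.5}. The only real obstacle is conceptual, namely recognising that the residues $k^{2}\bmod p$ are exactly the quadratic residues and exploiting the involution $r\mapsto p-r$, which is available precisely because $p\equiv1\pmod4$; this is also what suggests that the case $p\equiv3\pmod4$ will require a genuinely new ingredient (namely the class number of ${\mathbb Q}(\sqrt{-p})$).
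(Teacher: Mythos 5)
Your argument is correct and is essentially the paper's own route (following Shirali): both reduce the sum to $\sum_{k=1}^{(p-1)/2}\lfloor k^{2}/p\rfloor$, hence to the sum of the quadratic residues modulo $p$, and both evaluate that sum via the involution $r\mapsto p-r$, which preserves residues exactly because $\left(\frac{-1}{p}\right)=1$ for $p\equiv1\pmod4$. Your direct interchange of summation is a clean shortcut past the paper's general Proposition~\ref{prop:2.1}, whose extra bookkeeping (the $\delta_n(k)$ terms and Lemma~\ref{lem:2.2}) is only needed because $n$ there is allowed to be composite, whereas for $n=p$ prime the observation $p\nmid k^2$ makes the ceiling-to-floor step immediate.
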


The function $f(n)$ is the main object of study in this paper, and \eqref{1.5}
leads to a first obvious question: what can we say about $f(p)$ for primes
$p\equiv 3\pmod{4}$?

To explore this question, we list $f(p)$ in Table~1 for all such primes $p<100$,
with the exception of $p=3$. The regularity of the first three cases might lead
to the idea of an easy linear transformation to map them to 0.
This is done in columns 3 and 7 of Table~1. We now see that we have
$-p-1-4f(p)=0$ exactly when $p=7$, 11, 19, 43, and 67. If this sequence doesn't
already look familiar, the OEIS \cite{OEIS} reveals that these numbers represent
most of the square-free positive integers $d$ for which the ring of integers of
the imaginary quadratic field ${\mathbb Q}(\sqrt{-d})$ has unique factorization.
It is well-known that the largest such number is the prime $p=163$; and indeed,
we compute $f(p)=-41$ and $-p-1-4f(p)=0$. In fact, Shirali \cite[p.~270]{Sh} 
already observed this connection in a slightly different but equivalent 
setting. Since unique factorization means that the class number of the 
corresponding quadratic field is 1, it makes sense to consider the class 
numbers $h(-p)$ of ${\mathbb Q}(\sqrt{-p})$ for all $p\equiv 3\pmod{4}$ and
$p\geq 7$ as listed, for instance, in \cite[p.~425]{BS}. See columns 4 and 8
in Table~1.

\bigskip
\begin{center}
\begin{tabular}{|c|c|c|c||c|c|c|c|}
\hline
$p$ & $f(p)$ & $-p-1-4f(p)$ & $h(-p)$ & $p$ & $f(p)$ & $-p-1-4f(p)$ & $h(-p)$\\
\hline
7 & $-2$ & 0 & 1 & 47 & $-14$ & 8 & 5 \\
11 & $-3$ & 0 & 1 & 59 & $-16$ & 4 & 3 \\
19 & $-5$ & 0 & 1 & 67 & $-17$ & 0 & 1 \\
23 & $-7$ & 4 & 3 & 71 & $-21$ & 12 & 7 \\
31 & $-9$ & 4 & 3 & 79 & $-22$ & 8 & 5 \\
43 & $-11$& 0 & 1 & 83 & $-22$ & 4 & 3 \\
\hline
\end{tabular}

\medskip
{\bf Table~1}: $f(p)$ and $h(-p)$ for primes $p\equiv$ 3 (mod 4), $7\leq p<100$.
\end{center}
\bigskip

It is now quite clear how columns 3, 4 and 7, 8 relate to each other. In fact,
we can state the following result.

\begin{proposition}\label{prop:1.2}
Let $p\geq 7$ be a prime with $p\equiv3\pmod{4}$. Then
\begin{equation}\label{1.6}
f(p) = \frac{1}{4}\big(1-p-2h(-p)\big),
\end{equation}
where $h(-p)$ is the class number of ${\mathbb Q}(\sqrt{-p})$.
\end{proposition}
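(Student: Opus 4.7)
The plan is to follow the lattice-point approach that Shirali \cite{Sh} used for the case $p\equiv1\pmod4$, but to keep track of the extra contribution that arises in the $p\equiv3\pmod4$ setting. Throughout let $S(p):=\sum_{j=1}^{\lfloor p/4\rfloor}\lfloor\sqrt{jp}\rfloor$, so that $f(p)=S(p)-(p^2-1)/12$, and note that $\lfloor p/4\rfloor=(p-3)/4$.

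First I would rewrite $S(p)$ by summing horizontally instead of vertically. The inner count $\lfloor\sqrt{jp}\rfloor$ equals the number of integers $k\ge 1$ with $k^2\le jp$. Since $p$ is prime and $j\le(p-3)/4<p$, the equation $k^2=jp$ has no solutions with $1\le j\le(p-3)/4$, so the inequality can be replaced by $k^2<jp$, i.e.\ $j>k^2/p$, i.e.\ $j\ge\lfloor k^2/p\rfloor+1$. Checking that $k^2<jp\le p(p-3)/4$ forces $k\le(p-1)/2$, one obtains
\begin{equation*}
S(p)=\sum_{k=1}^{(p-1)/2}\Bigl(\tfrac{p-3}{4}-\bigl\lfloor k^2/p\bigr\rfloor\Bigr)
=\frac{(p-1)(p-3)}{8}-\sum_{k=1}^{(p-1)/2}\bigl\lfloor k^2/p\bigr\rfloor.
\end{equation*}

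Next I would evaluate the last sum. Writing $\lfloor k^2/p\rfloor=(k^2-r_k)/p$ where $r_k:=k^2\bmod p$, and using the well-known fact that as $k$ runs through $1,\dots,(p-1)/2$, the residues $r_k$ run through each nonzero quadratic residue mod $p$ exactly once, gives
\begin{equation*}
\sum_{k=1}^{(p-1)/2}\bigl\lfloor k^2/p\bigr\rfloor
=\frac{1}{p}\Bigl(\sum_{k=1}^{(p-1)/2}k^2-Q\Bigr),
\qquad Q:=\sum_{\genfrac(){}{}{r}{p}=1,\,1\le r\le p-1}\!\!\!r.
\end{equation*}
The first sum is $p(p^2-1)/24$. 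For $Q$, I would invoke Dirichlet's class number formula in the form
\begin{equation*}
\sum_{k=1}^{p-1}k\Bigl(\tfrac{k}{p}\Bigr)=-p\,h(-p)\qquad(p\equiv 3\pmod 4,\ p>3),
\end{equation*}
which together with $\sum_{k=1}^{p-1}k=p(p-1)/2$ yields $Q=p\bigl(p-1-2h(-p)\bigr)/4$.

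Substituting everything back and simplifying should give the claimed identity $f(p)=\bigl(1-p-2h(-p)\bigr)/4$. The main obstacle is really just bookkeeping: one must be careful that the step from $k^2\le jp$ to $k^2<jp$ is valid (which is where primality of $p$ enters essentially), and one must cite the correct normalization of the class number formula. Once these are in hand, the computation is mechanical and the three terms $(p-1)(p-3)/8$, $-(p^2-1)/24+(p-1-2h(-p))/4$ coming from $S(p)$, and $-(p^2-1)/12$ from the definition of $f(p)$ combine to the desired expression.
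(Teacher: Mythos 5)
Your proposal is correct and follows essentially the same route as the paper: a lattice-point interchange reducing $S(p)$ to $\sum_{k}\lfloor k^2/p\rfloor$ (which the paper packages as its general Proposition~\ref{prop:2.1}), followed by the sum over quadratic residues and Jacobi's form of Dirichlet's class number formula $\sum_{j=1}^{p-1}j\bigl(\tfrac{j}{p}\bigr)=-p\,h(-p)$, exactly as in the paper's Lemma~\ref{lem:2.3} and Corollary~\ref{cor:3.3}. The only bookkeeping point: $k^2\le p(p-3)/4$ in fact forces $k\le(p-3)/2$ rather than $(p-1)/2$, but the extra term you include at $k=(p-1)/2$ vanishes because $\lfloor(p-1)^2/(4p)\rfloor=(p-3)/4$, so your identity and the final computation stand.
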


It is the purpose of this paper to prove Proposition~\ref{prop:1.2} and other
identities of this type. One main tool is the connection between $f(n)$
and quadratic residues that was established by Shirali \cite{Sh} in a special
case. We generalize this in Section~\ref{sec:2} to arbitrary $n\in{\mathbb N}$
and obtain some initial elementary results in Section~\ref{sec:3}.

A second important tool is a class number formula of Dirichlet. We introduce
it in Section~\ref{sec:4}, where we also establish or quote specific
versions used in this paper. All this is then applied in Section~\ref{sec:5}
to obtain identities for $f(2p)$ and $f(4p)$, where $p$ is an odd prime, and
in Section~\ref{sec:6} to arbitrary powers of all primes. We finish with some
conjectures in Section~\ref{sec:7}.

\section{Connections with quadratic residues}\label{sec:2}

In the process of providing an alternative proof of Bouniakowski's result
\eqref{1.5}, Shirali \cite{Sh} established a connection with quadratic residues.
We extend this approach and set the stage for a connection with class
number formulas. Following \cite{Sh}, we set
\begin{equation}\label{2.1}
F(n):=\sum_{j=1}^{\lfloor{n/4}\rfloor}\left\lfloor{\sqrt{jn}}\right\rfloor
\qquad(n\in{\mathbb N}),
\end{equation}
and for integers $a\geq 0$, $b\geq 1$ we denote by Rem$(a\div b)$ the smallest 
nonnegative remainder as $a$ is divided by $b$. We are now ready to state and
prove the main connection between $F(n)$ and quadratic residues.
In what follows, we always assume that $n$ is a positive integer, and we write
\begin{equation}\label{2.1a}
n = 4\nu+r,\quad0\leq r\leq 3,\quad\hbox{so that}\quad
\textstyle{\nu=\lfloor\frac{n}{4}\rfloor.}
\end{equation}
We will also use the fact that any nonzero integer has a unique representation
as a product of a squarefree integer and a perfect square; see, e.g., 
\cite[p.~29]{NZM}.

\begin{proposition}\label{prop:2.1}
$(a)$ With $n$ as in \eqref{2.1a}, let $1\leq r\leq 3$ and $n=P_n\cdot Q_n^2$,
with $P_n$ squarefree. Then
\begin{equation}\label{2.2}
F(n)=2\nu^2-\frac{\nu}{3n}\cdot\left(8\nu^2+6\nu+1\right)+\frac{1}{2}(Q_n-1)
+\frac{1}{n}\sum_{k=1}^{2\nu}{\rm Rem}(k^2\div n).
\end{equation}
$(b)$ If $n=4\nu$, let $\nu=\overline{P}_n\cdot\overline{Q}_n^2$, with 
$\overline{P}_n$ squarefree. Then
\begin{equation}\label{2.3}
F(4\nu)=\tfrac{4}{3}\nu^2-\tfrac{1}{2}\nu-\tfrac{1}{12}+\overline{Q}_n
+\frac{1}{4\nu}\sum_{k=1}^{2\nu}{\rm Rem}(k^2\div 4\nu).
\end{equation}
\end{proposition}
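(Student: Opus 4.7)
The plan is to rewrite $F(n)$ by a double-counting argument and then isolate a correction term equal to $|S|:=\#\{k\in[1,2\nu]:n\mid k^{2}\}$. Using $\lfloor\sqrt{jn}\rfloor=\#\{k\ge 1:k^{2}\le jn\}$, we get $F(n)=\#\{(j,k):1\le j\le\nu,\ k\ge 1,\ k^{2}\le jn\}$. From \eqref{2.1a} one verifies $4\nu^{2}\le\nu n<(2\nu+1)^{2}$, so $\lfloor\sqrt{\nu n}\rfloor=2\nu$ (with equality on the left exactly when $r=0$), and hence the effective range of $k$ is $1\le k\le 2\nu$ in both cases. Swapping summations gives $F(n)=\sum_{k=1}^{2\nu}(\nu-\lceil k^{2}/n\rceil+1)$.

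Next I would split the ceilings according to whether $n\mid k^{2}$: off $S$ we have $\lceil k^{2}/n\rceil=\lfloor k^{2}/n\rfloor+1$, while on $S$ they agree. Collapsing yields
\[
F(n)=2\nu^{2}-\sum_{k=1}^{2\nu}\lfloor k^{2}/n\rfloor+|S|,
\]
and substituting $\lfloor k^{2}/n\rfloor=(k^{2}-{\rm Rem}(k^{2}\div n))/n$ together with $\sum_{k=1}^{2\nu}k^{2}=\nu(8\nu^{2}+6\nu+1)/3$ produces the shape of \eqref{2.2}. Part (b) then follows from the elementary simplification $(8\nu^{2}+6\nu+1)/12=\tfrac{2}{3}\nu^{2}+\tfrac{1}{2}\nu+\tfrac{1}{12}$ inside \eqref{2.2} specialised to $n=4\nu$.

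The main obstacle is evaluating $|S|$ in each regime. A prime-by-prime argument using that $P_{n}$ is squarefree shows $n\mid k^{2}\iff P_{n}Q_{n}\mid k$, so $|S|=\lfloor 2\nu/(P_{n}Q_{n})\rfloor$. For part (a), $r\in\{1,2,3\}$ forces $v_{2}(n)\in\{0,1\}$ and hence $Q_{n}$ is odd; from $2\nu=(P_{n}Q_{n}^{2}-r)/2$,
\[
\frac{2\nu}{P_{n}Q_{n}}=\frac{Q_{n}}{2}-\frac{r}{2P_{n}Q_{n}},
\]
and a short case-check (using $P_{n}\ge 2$ when $r=2$ since $v_{2}(n)=1$, and $P_{n}\ge 3$ when $r=3$ since $n\equiv 3\pmod 4$ cannot be a square) shows $r\le P_{n}Q_{n}$, whence the correction lies in $(0,\tfrac{1}{2}]$ and the floor equals $(Q_{n}-1)/2$. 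For part (b), comparing $v_{p}$'s on both sides gives $P_{4\nu}=\overline{P}_{n}$ and $Q_{4\nu}=2\overline{Q}_{n}$, so $|S|=\lfloor 2\nu/(2\overline{P}_{n}\overline{Q}_{n})\rfloor=\overline{Q}_{n}$ exactly (the endpoint $k=2\nu$ now lies in $S$ because $(2\nu)^{2}=\nu n$). The delicate floor estimate in (a) is precisely where the hypothesis $r\ne 0$ is used.
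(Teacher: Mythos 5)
Your argument is correct, and it reaches \eqref{2.2} and \eqref{2.3} by a route that is noticeably cleaner than the paper's, though the two are close cousins. The paper follows Shirali: it introduces the fiber counts $N_n(k)=|\{j\le\nu:\lfloor\sqrt{jn}\rfloor=k\}|$, writes $F(n)=\sum_k kN_n(k)$, expresses $N_n(k)$ as a difference of floors plus a correction $\delta_n(k)\in\{-1,0,1\}$ tracking when $n\mid k^2$ or $n\mid(k+1)^2$, and then telescopes; the resulting correction sum is evaluated by a separate lemma that explicitly lists the $k\le 2\nu-1$ with $n\mid k^2$ as the multiples of $P_nQ_n$ and counts them by a case analysis (with the endpoint $k=2\nu$ handled separately when $4\mid n$). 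Your double count $F(n)=\#\{(j,k):j\le\nu,\ k^2\le jn\}=\sum_{k=1}^{2\nu}(\nu-\lceil k^2/n\rceil+1)$ is what the paper's telescoping produces after the fact, but you get the divisibility correction for free from $\lceil x\rceil=\lfloor x\rfloor+1-[x\in\mathbb{Z}]$ rather than from the $\delta_n(k)$ bookkeeping, and your closed form $|S|=\lfloor 2\nu/(P_nQ_n)\rfloor$ (justified by the valuation argument $n\mid k^2\iff P_nQ_n\mid k$) unifies the paper's two counting cases: the inequality $P_nQ_n\ge r$ that you check is exactly the inequality the paper verifies in its Lemma, and your observation that $k=2\nu$ enters the count precisely when $r=0$ reproduces the paper's endpoint discussion. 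What your version buys is brevity and a single uniform correction term; what the paper's version buys is the explicit list \eqref{2.6}/\eqref{2.8} of the exceptional $k$, which it does not reuse later, so nothing is lost by your shortcut.
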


Proposition~\ref{prop:2.1}(b) simplifies if we consider the function $f(n)$ 
defined by \eqref{1.4}. The following can be obtained by straightforward
manipulation.

\begin{corollary}\label{cor:2.2}
Let $n\equiv 0\pmod{4}$ be a positive integer. Then
\begin{equation}\label{2.3a}
f(n)=-\frac{n}{8}+\overline{Q}_n
+\frac{1}{n}\sum_{k=1}^{n/2}{\rm Rem}(k^2\div n),
\end{equation}
where $\frac{n}{4}=\overline{P}_n\cdot\overline{Q}_n^2$, with $\overline{P}_n$ 
squarefree.
\end{corollary}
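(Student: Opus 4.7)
The plan is to derive Corollary~\ref{cor:2.2} directly from Proposition~\ref{prop:2.1}(b) by substituting the formula for $F(n)$ into the definition $f(n)=F(n)-\frac{n^2-1}{12}$ from \eqref{1.4}, and collecting terms. Since $n=4\nu$, the summation bound $2\nu$ equals $n/2$, and the denominator $4\nu$ equals $n$, so the only real work is algebraic simplification of the polynomial part.

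Concretely, I would first write
\[
\frac{n^2-1}{12}=\frac{(4\nu)^2-1}{12}=\tfrac{4}{3}\nu^{2}-\tfrac{1}{12},
\]
and subtract this from the right-hand side of \eqref{2.3}. The $\tfrac{4}{3}\nu^{2}$ terms cancel and the two $\tfrac{1}{12}$ terms cancel, leaving
\[
f(n)=-\tfrac{1}{2}\nu+\overline{Q}_{n}+\frac{1}{4\nu}\sum_{k=1}^{2\nu}{\rm Rem}(k^{2}\div 4\nu).
\]
Next, I would rewrite $-\tfrac{1}{2}\nu=-\tfrac{n}{8}$ using $\nu=n/4$, and replace $4\nu$ by $n$ and the upper summation index $2\nu$ by $n/2$ in the sum. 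This yields exactly \eqref{2.3a}.

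There is no real obstacle here: the result is advertised as being obtained by "straightforward manipulation," and indeed the derivation amounts to bookkeeping of the constants in \eqref{2.3}. The only minor point to be careful about is making sure that the substitution $\nu=n/4$ is used consistently in both the polynomial part and the summation indices, so that the final expression is written purely in terms of $n$ (apart from $\overline{Q}_{n}$, which is defined via the factorization $n/4=\overline{P}_{n}\cdot\overline{Q}_{n}^{2}$).
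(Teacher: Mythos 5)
Your derivation is correct and is exactly the ``straightforward manipulation'' the paper has in mind: substituting \eqref{2.3} into $f(n)=F(n)-\frac{n^2-1}{12}$, cancelling $\frac{4}{3}\nu^2$ and $\frac{1}{12}$, and rewriting $\nu=n/4$ throughout. The paper gives no further detail, so there is nothing to add.
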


To prove Proposition~\ref{prop:2.1}, we require the following lemma.

\begin{lemma}\label{lem:2.2}
Let $n\geq 1$ be an integer, and denote
\begin{equation}\label{2.4}
A(n):=\left|\left\{k: 1\leq k\leq \textstyle{2\lfloor\frac{n}{4}\rfloor-1}, 
n\mid k^2
\right\}\right|.
\end{equation}
Then the following is true:
\begin{enumerate}
\item[(a)] If $4\nmid n$ and $n=P\cdot Q^2$, where $P$ is squarefree, then
$A(n)=\frac{1}{2}(Q-1)$.
\item[(b)] If $n=4\cdot P\cdot Q^2$, where $P$ is squarefree, then $A(n)=Q-1$.
\end{enumerate}
\end{lemma}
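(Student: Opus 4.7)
The plan is to convert the divisibility condition $n \mid k^2$ into a single condition of the form $R \mid k$ for an integer $R = R(n)$, and then count the multiples of $R$ in the interval $[1, 2\nu-1]$, so that $A(n) = \lfloor (2\nu-1)/R \rfloor$.

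The arithmetic core, which I would establish first, is: for any positive integer $m$ written as $m = PQ^2$ with $P$ squarefree, one has $m \mid k^2$ if and only if $PQ \mid k$. This reduces to checking, prime by prime, that $2v_p(k) \ge v_p(m)$ is equivalent to $v_p(k) \ge \lceil v_p(m)/2 \rceil$; since $P$ is squarefree, this ceiling equals $v_p(Q) + [p \mid P]$, and multiplying these powers across all primes gives exactly $PQ$.

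For part~(a), the hypothesis $n = PQ^2$ with $P$ squarefree is already the standard decomposition, and the condition $4 \nmid n$ forces $Q$ to be odd (otherwise $4 \mid Q^2 \mid n$). Writing $n = 4\nu + r$ with $r \in \{1,2,3\}$, I would rearrange
\[
\frac{2\nu - 1}{PQ} \;=\; \frac{Q}{2} - \frac{r+2}{2PQ} \;=\; \frac{Q-1}{2} + \frac{1}{2}\Bigl(1 - \frac{r+2}{PQ}\Bigr),
\]
and observe that, since $(Q-1)/2$ is an integer, the floor equals $(Q-1)/2$ whenever $r+2 \le PQ$. A short case check on the residue class of $P$ modulo $4$ (forced by $r$) shows that the only violations occur for $n \in \{1,2,3\}$, which are degenerate: $\nu = 0$, so $A(n) = 0 = (Q-1)/2$.

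For part~(b), $n = 4PQ^2$ with $P$ squarefree is \emph{not} the standard squarefree decomposition of $n$, but the arithmetic core still applies: whether $P$ is odd (so that $n = P\cdot(2Q)^2$) or $P = 2P'$ is even (so that $n = (2P')\cdot(2Q)^2$), the product of the squarefree part of $n$ with the square-root of its square part equals $2PQ$ in both cases. Therefore $n \mid k^2 \iff 2PQ \mid k$, and
\[
A(n) \;=\; \left\lfloor \frac{2\nu - 1}{2PQ} \right\rfloor \;=\; \left\lfloor Q - \frac{1}{2PQ} \right\rfloor \;=\; Q - 1.
\]
The main obstacle is really the arithmetic core itself; once it is in place, the remaining work is routine, with the only subtlety being the bookkeeping in part~(b) to confirm that the non-standard form $n = 4PQ^2$ still yields the clean divisor $R = 2PQ$ regardless of the parity of $P$.
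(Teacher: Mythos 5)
Your proposal is correct and follows essentially the same route as the paper: both reduce $n\mid k^2$ to the single divisibility $PQ\mid k$ (resp.\ $2PQ\mid k$) and then count the multiples in $[1,2\lfloor n/4\rfloor-1]$, with the same boundary check $PQ\geq r+2$ and the same small-$n$ exceptions. The only cosmetic difference is that you establish the reduction via $p$-adic valuations while the paper argues directly from squarefreeness of $P$; the counting step is identical.
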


\begin{proof}
(a) The condition $n\mid k^2$ is equivalent to $k^2=mPQ^2$ for some integer
$m\geq 1$. Since $P$ is squarefree, the fact that $P$ divides $k^2$ implies
$P\mid k$, and thus $P\mid m$ as well. We therefore get
\begin{equation}\label{2.5}
\left(\frac{k}{P}\right)^2 = \frac{m}{P}\cdot Q^2,\quad\hbox{or}\quad
\widetilde{k}^2=\widetilde{m}\cdot Q^2,
\end{equation}
with $\widetilde{k}:=k/P$ and $\widetilde{m}:=m/P$. Furthermore, the second
identity in \eqref{2.5} implies that $\widetilde{m}$ is also a square, and
thus $\widetilde{k}$ is a multiple of $Q$. Hence
\begin{equation}\label{2.6}
k\in\left\{PQ, 2PQ,\ldots,\tfrac{Q-1}{2}\cdot PQ\right\},
\end{equation}
where we claim that the last term in \eqref{2.6} is the largest one that 
satisfies $k\leq 2\lfloor\frac{n}{4}\rfloor-1$. To see this, we note that
\[
\left\lfloor\frac{n}{4}\right\rfloor=\frac{PQ^2-r}{4},\qquad\hbox{and thus}
\qquad 2\left\lfloor\frac{n}{4}\right\rfloor-1=\frac{PQ^2-r-2}{2},
\]
where $1\leq r\leq 3$. Now we have
\[
\tfrac{Q-1}{2}\cdot PQ \leq\tfrac{1}{2}\left(PQ^2-r-2\right)\qquad
\Leftrightarrow\qquad PQ \geq r+2,
\]
and it is easy to check that the right-hand inequality holds for all $n\geq 5$.
Finally, it is obvious that $\frac{1}{2}(Q+1)PQ>2\lfloor\frac{n}{4}\rfloor-1$,
and by definition we have $A(n)=0$ for $n=1, 2, 3$; this completes the proof
of part (a).

(b) This time we rewrite $n\mid k^2$ as $k^2=4mPQ^2$, with some integer 
$m\geq 1$, and we proceed as before. In analogy to \eqref{2.5} we get
\begin{equation}\label{2.7}
\left(\frac{k}{2P}\right)^2 = \frac{m}{P}\cdot Q^2,\quad\hbox{or}\quad
\widetilde{k}^2=\widetilde{m}\cdot Q^2,
\end{equation}
with $\widetilde{k}:=k/(2P)$ and $\widetilde{m}:=m/P$. The second identity in
\eqref{2.7} means that $\widetilde{m}$ is also a square, and thus 
$\widetilde{k}$ is a multiple of $Q$. So, in analogy to \eqref{2.6} we have
\begin{equation}\label{2.8}
k\in\left\{2PQ, 4PQ,\ldots, 2(Q-1)PQ\right\},
\end{equation}
where we claim that the last term in \eqref{2.8} is the largest one that
satisfies $k\leq 2\lfloor\frac{n}{4}\rfloor-1$. This time we note that
$2\lfloor\frac{n}{4}\rfloor-1=2PQ^2-1$, and clearly
\[
2(Q-1)PQ < 2PQ^2-1 < 2Q\cdot PQ,
\]
which proves the claim. Therefore, by \eqref{2.8}, $A(n)=Q-1$, as desired.
\end{proof}

\begin{proof}[Proof of Proposition~\ref{prop:2.1}]
We begin by dealing jointly with both cases. Adapting the main ideas in 
\cite{Sh}, we fix an integer $k$ and define $N_n(k)$ to be the cardinality
\begin{equation}\label{2.9}
N_n(k):=\left|\left\{j\in{\mathbb N}:1\leq j\leq\lfloor n/4\rfloor,
\lfloor\sqrt{jn}\rfloor=k\right\}\right|.
\end{equation}
With the goal of summing over all relevant $k$, we note
that $j\leq\nu$ is equivalent to $k^2\leq n\nu$. Hence with \eqref{2.1a},
\begin{equation}\label{2.9a}
k^2\leq n\nu = (4\nu+r)\nu <4\nu^2+4\nu < (2\nu+1)^2,
\end{equation}
so that $k\leq 2\nu$. Therefore \eqref{2.1} and \eqref{2.9} imply
\begin{equation}\label{2.10}
F(n) = \sum_{k=1}^{2\nu} k\cdot N_n(k).
\end{equation}
First we evaluate $N_n(2\nu)$. By \eqref{2.9a} we have 
$\lfloor\sqrt{\nu n}\rfloor=2\nu$, while
\[
(\nu-1)n = (\nu-1)(4\nu+r) = 4\nu^2-(4-r)\nu-r < 4\nu^2
\]
since $0\leq r\leq 3$, and thus $\lfloor\sqrt{(\nu-1)n}\rfloor=2\nu-1$.
Therefore
\begin{equation}\label{2.11}
N_n(2\nu) = 1.
\end{equation}
We can now restrict our attention to $k\leq 2\nu-1$ and note that
$\left\lfloor{\sqrt{jn}}\right\rfloor=k$ if and only if
\begin{equation}\label{2.12}
k^2\leq jn < (k+1)^2\quad\Leftrightarrow\quad
\frac{k^2}{n}\leq j<\frac{(k+1)^2}{n}.
\end{equation}
From the right-hand relation in \eqref{2.12} we see that
\begin{equation}\label{2.13}
N_n(k)=\left\lfloor{\frac{(k+1)^2}{n}}\right\rfloor
-\left\lfloor{\frac{k^2}{n}}\right\rfloor+\delta_n(k),
\end{equation}
where
\[
\delta_n(k)=\begin{cases}
-1 &\hbox{when}\; n\mid(k+1)^2,\\
0 &\hbox{when}\; n\nmid k^2\;\hbox{and}\; n\nmid(k+1)^2,\\
1 &\hbox{when}\; n\mid k^2.
\end{cases}
\]
This, together with \eqref{2.10} and \eqref{2.11}, gives
\begin{equation}\label{2.14}
F(n)=2\nu+\sum_{k=1}^{2\nu-1}k\left(\left\lfloor{\frac{(k+1)^2}{n}}\right\rfloor
-\left\lfloor{\frac{k^2}{n}}\right\rfloor\right)
+\sum_{k=1}^{2\nu-1}k\cdot\delta_n(k).
\end{equation}
Let $S_1(n)$ and $S_2(n)$ be the first, resp.\ the second, sum on the right of
\eqref{2.14}. Then we have
\begin{align}
S_1(n)&=\sum_{k=1}^{2\nu-1}\left(k\left\lfloor{\frac{(k+1)^2}{n}}\right\rfloor
-(k-1)\left\lfloor{\frac{k^2}{n}}\right\rfloor\
-\left\lfloor{\frac{k^2}{n}}\right\rfloor\right)\label{2.15}\\
&=(2\nu-1)\left\lfloor{\frac{(2\nu)^2}{n}}\right\rfloor
-\sum_{k=1}^{2\nu-1}\left\lfloor{\frac{k^2}{n}}\right\rfloor\nonumber\\
&=2\nu\left\lfloor{\frac{(2\nu)^2}{n}}\right\rfloor
-\sum_{k=1}^{2\nu}\left\lfloor{\frac{k^2}{n}}\right\rfloor,\nonumber
\end{align}
where we have used telescoping. Next we rewrite
\begin{equation}\label{2.16}
S_2(n) = \sum_{\substack{k=1\\n\mid k^2}}^{2\nu-1}k
-\sum_{\substack{k=1\\n\mid(k+1)^2}}^{2\nu-1}k.
\end{equation} 
To simplify this expression, we first note that whenever $k$ occurs in the
first sum, then $k-1$ occurs in the second sum. Conversely, the final term
$k=2\nu-1$ in the second sum in \eqref{2.16} may not correspond to a term in
the first sum. This happens exactly when $n\mid(2\nu)^2$, and since 
$n=4\nu+r$, $0\leq r\leq 3$, this is the case if and only if $r=0$. Hence with
\eqref{2.16} we have 
\begin{align}
S_2(n) &= \sum_{\substack{k=1\\n\mid k^2}}^{2\nu-1}1 - \begin{cases}
2\nu-1&\hbox{when}\;r=0,\\
0&\hbox{when}\;1\leq r\leq 3,
\end{cases}\label{2.17}\\
&= A(n)  - \begin{cases}
2\nu-1&\hbox{when}\;r=0,\\
0&\hbox{when}\;1\leq r\leq 3,
\end{cases}\nonumber
\end{align}
where we have used \eqref{2.4}.

To put everything together, we begin with $r=0$, noting that
$\lfloor(2\nu)^2/n\rfloor=\nu$. Then \eqref{2.15}, \eqref{2.17}, and 
Lemma~\ref{lem:2.2}(b), combined with \eqref{2.14}, give
\begin{equation}\label{2.18}
F(4\nu)=2\nu+2\nu^2-\sum_{k=1}^{2\nu}\left\lfloor{\frac{k^2}{n}}\right\rfloor
+\overline{Q}_n-1-(2\nu-1).
\end{equation}
When $1\leq r\leq 3$, then it is easy to see with \eqref{2.1a} that 
$\lfloor(2\nu)^2/n\rfloor=\nu-1$. In this case, \eqref{2.15}, \eqref{2.17},
and Lemma~\ref{lem:2.2}(a), combined with \eqref{2.14}, give
\begin{equation}\label{2.19}
F(n)=2\nu+2\nu(\nu-1)-\sum_{k=1}^{2\nu}\left\lfloor{\frac{k^2}{n}}\right\rfloor
+\frac{Q_n-1}{2}.
\end{equation}

By division with remainder we have
\begin{equation}\label{2.20}
k^2 = \left\lfloor{\frac{k^2}{n}}\right\rfloor\cdot n + {\rm Rem}(k^2\div n),
\end{equation}
and thus
\begin{equation}\label{2.21}
\sum_{k=1}^{2\nu}\left\lfloor{\frac{k^2}{n}}\right\rfloor
=\sum_{k=1}^{2\nu}\frac{k^2}{n}
-\sum_{k=1}^{2\nu}\frac{{\rm Rem}(k^2\div n)}{n}.
\end{equation}
Using the well-known identity for sums of consecutive squares, we get
\begin{equation}\label{2.22}
\sum_{k=1}^{2\nu}\frac{k^2}{n}
=\frac{1}{6n}\cdot 2\nu(2\nu+1)(4\nu+1) = \nu\cdot\frac{8\nu^2+6\nu+1}{3n}.
\end{equation}
Substituting this last identity into \eqref{2.18} and \eqref{2.19}, we obtain
after some straightforward manipulations the desired identities \eqref{2.3} 
and \eqref{2.2}, respectively.
\end{proof}

\section{First consequences of Proposition~\ref{prop:2.1}}\label{sec:3}

As a special case of Proposition~\ref{prop:2.1}(a) we consider $n=p$, where $p$
is an odd prime. We first recall a few basic facts about quadratic residues. 
For a prime $p>2$ and an integer $a$ with 
$p\nmid a$, the number $a$ is said to be a {\it quadratic residue} modulo $p$ 
if there is an integer $k$ such that $k^2\equiv a\pmod{p}$; in this case we
write $a\in QR(p)$. When $p\nmid a$, the {\it Legendre symbol} is defined by
\begin{equation}\label{2.23}
\left(\frac{a}{p}\right) = \begin{cases}
1 &\hbox{when}\;\; a\in QR(p),\\
-1 &\hbox{when}\;\; a\not\in QR(p);
\end{cases}
\end{equation}
this is supplemented by $(\frac{a}{p})=0$ when $p|a$.
We can now prove the following connection between \eqref{2.2} and the Legendre
symbol.

\begin{lemma}\label{lem:2.3}
For any prime $p\geq 3$ we have
\begin{equation}\label{2.24}
\sum_{k=1}^{\frac{p-1}{2}}{\rm Rem}(k^2\div p) = \frac{p(p-1)}{4}
+\frac{1}{2}\sum_{j=1}^{p-1}j\left(\frac{j}{p}\right).
\end{equation}
\end{lemma}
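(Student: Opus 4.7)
The plan is to identify the left-hand side with the sum of the nonzero quadratic residues of $p$ in $\{1,\dots,p-1\}$, and then to extract that sum from the Legendre-symbol sum on the right using an elementary splitting argument.

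First I would observe that as $k$ ranges over $1,2,\dots,\frac{p-1}{2}$, the residues $k^2\bmod p$ are precisely the nonzero quadratic residues of $p$, each occurring exactly once. Indeed, the map $k\mapsto k^2\bmod p$ on $\{1,\dots,\frac{p-1}{2}\}$ is injective (if $k_1^2\equiv k_2^2\pmod p$ with both in this range, then $p\mid(k_1-k_2)(k_1+k_2)$ forces $k_1=k_2$), and its image consists of $\frac{p-1}{2}$ nonzero quadratic residues, which exhausts $QR(p)$. Since each $k$ satisfies $1\le k\le\frac{p-1}{2}<p$, we have $k^2\not\equiv 0\pmod p$, and the values ${\rm Rem}(k^2\div p)$ lie in $\{1,\dots,p-1\}$. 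Therefore
\[
\sum_{k=1}^{(p-1)/2}{\rm Rem}(k^2\div p)=\sum_{\substack{1\le a\le p-1\\ a\in QR(p)}}a=:S_+.
\]

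Next I would set $S_-:=\sum_{1\le a\le p-1,\ a\notin QR(p)}a$ and use the two linear relations
\[
S_++S_-=\sum_{j=1}^{p-1}j=\frac{p(p-1)}{2},\qquad S_+-S_-=\sum_{j=1}^{p-1}j\left(\frac{j}{p}\right),
\]
the second of which is immediate from the definition \eqref{2.23} of the Legendre symbol (there are no contributions from $p\mid j$ in the range). Adding these and dividing by $2$ yields
\[
S_+=\frac{p(p-1)}{4}+\frac{1}{2}\sum_{j=1}^{p-1}j\left(\frac{j}{p}\right),
\]
which is exactly \eqref{2.24}.

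There is no real obstacle here; the only point needing care is the bijection between $\{1,\dots,\frac{p-1}{2}\}$ and $QR(p)$ under squaring modulo $p$, which is a standard fact. Everything else is bookkeeping with a sign-weighted partition of $\{1,\dots,p-1\}$ into residues and non-residues.
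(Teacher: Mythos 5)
Your proof is correct and is essentially the same as the paper's: both identify $\sum_{k=1}^{(p-1)/2}{\rm Rem}(k^2\div p)$ with the sum of the quadratic residues in $\{1,\dots,p-1\}$ and then compare it with $\sum_{j=1}^{p-1}j(\tfrac{j}{p})$ via the residue/non-residue splitting. You merely spell out the injectivity of $k\mapsto k^2\bmod p$ more explicitly than the paper does.
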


\begin{proof}
By definition of the Legendre symbol we have
\begin{align*}
\sum_{j=1}^{p-1}j\left(\frac{j}{p}\right)
&=\sum_{j\in QR(p)}j - \sum_{j\not\in QR(p)}j
= 2\sum_{j\in QR(p)}j - \sum_{j=1}^{p-1}j \\
&= 2\sum_{k=1}^{\frac{p-1}{2}}{\rm Rem}(k^2\div p)-\frac{p(p-1)}{2},
\end{align*}
and \eqref{2.24} follows immediately.
\end{proof}

We can now obtain a proof of Proposition~\ref{prop:1.1}. We consider  
Lemma~\ref{lem:2.3} for $p\equiv 1\pmod{4}$ and use the well-known fact that 
\begin{equation}\label{2.25}
\sum_{j=1}^{p-1}j\left(\frac{j}{p}\right)=0\qquad (p\equiv 1\pmod{4}).
\end{equation}
The identity \eqref{2.25} can be shown, for instance, by noting that in this 
case we have $(\frac{p-j}{p})=(\frac{j}{p})$, and by changing the direction 
of summation we get
\begin{equation}\label{2.26}
\sum_{j=1}^{p-1}j\left(\frac{j}{p}\right)
=\sum_{j=1}^{p-1}(p-j)\left(\frac{p-j}{p}\right)
=p\sum_{j=1}^{p-1}\left(\frac{j}{p}\right)
-\sum_{j=1}^{p-1}j\left(\frac{j}{p}\right).
\end{equation}
The first sum on the right is 0 since there are as many quadratic residues as
nonresidues, and then \eqref{2.26} implies \eqref{2.25}.

Now \eqref{2.25}, together with \eqref{2.24} and \eqref{2.2} with $Q_p=1$,
leads to $F(p)=(p^2-1)/12$. This is, basically, the idea behind
Shirali's proof in \cite{Sh} of Proposition~\ref{prop:1.1}.

Since \eqref{2.25} does not hold for $p\equiv 3\pmod{4}$, Lemma~\ref{lem:2.3}
will be different in this case; we will deal with this in the next section.

\medskip
Quite surprisingly, Proposition~\ref{prop:1.1} remains true when we replace the
prime $p\equiv 1\pmod{4}$ with a product of distinct primes of this form. We 
will see that this, and a related identity, are consequences of a result by 
Shirali. We begin with a lemma.

\begin{lemma}\label{lem:4.1}
For any odd integer $n\geq 3$ we have
\begin{equation}\label{4.1}
\sum_{k=1}^{n-1}{\rm Rem}(k^2\div 2n) = \frac{n(n-1)}{2}
+2\sum_{k=1}^{\frac{n-1}{2}}{\rm Rem}(k^2\div n).
\end{equation}
\end{lemma}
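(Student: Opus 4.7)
The plan is to compare each term $\mathrm{Rem}(k^2\div 2n)$ against $\mathrm{Rem}(k^2\div n)$ by looking at the parity of the quotient, and then use a pairing argument to count how often the quotient is odd. Write
\[
k^2 = q_k n + r_k,\qquad 0\le r_k<n,\qquad k^2 = 2Q_k n + R_k,\qquad 0\le R_k<2n.
\]
Subtracting gives $R_k - r_k = n(q_k - 2Q_k)$, and since $R_k-r_k\in(-n,2n)$, the quantity $q_k-2Q_k$ must be $0$ or $1$. Consequently $R_k = r_k$ when $q_k = \lfloor k^2/n\rfloor$ is even, and $R_k = r_k + n$ when $q_k$ is odd.

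Summing from $k=1$ to $n-1$, I would obtain
\[
\sum_{k=1}^{n-1}\mathrm{Rem}(k^2\div 2n) = \sum_{k=1}^{n-1}\mathrm{Rem}(k^2\div n) + n\cdot M,
\]
where $M = \#\{k\in[1,n-1] : \lfloor k^2/n\rfloor \text{ is odd}\}$. Since $(n-k)^2\equiv k^2\pmod n$, we have $r_{n-k}=r_k$, so the first sum on the right equals $2\sum_{k=1}^{(n-1)/2}\mathrm{Rem}(k^2\div n)$. Thus the lemma reduces to showing $M = (n-1)/2$.

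For this I pair $k$ with $n-k$ (these are distinct because $n$ is odd). A direct computation gives $(n-k)^2/n = n - 2k + k^2/n$, so
\[
\left\lfloor\frac{(n-k)^2}{n}\right\rfloor - \left\lfloor\frac{k^2}{n}\right\rfloor = n-2k,
\]
which is odd since $n$ is odd. Therefore the two floors have opposite parities, so exactly one member of each of the $(n-1)/2$ pairs $\{k,n-k\}$ contributes to $M$, giving $M=(n-1)/2$. Substituting back yields the claimed identity.

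The only mild subtlety is verifying the range $R_k-r_k\in(-n,2n)$ and thereby pinning down $q_k-2Q_k\in\{0,1\}$; the rest is bookkeeping and the clean parity pairing, which is the real engine of the proof.
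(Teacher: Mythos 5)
Your argument is correct and is essentially the paper's proof: both rest on the pairing $k\leftrightarrow n-k$, the fact that $(n-k)^2\equiv k^2\pmod n$, and the observation that exactly one member of each pair acquires an extra $n$ when the remainder is taken modulo $2n$ instead of modulo $n$, yielding the excess $n(n-1)/2$. You detect the extra $n$ via the parity of $\lfloor k^2/n\rfloor$ while the paper computes $(n-k)^2\equiv n+a\pmod{2n}$ directly, but this is only a cosmetic difference (your version is, if anything, slightly cleaner in the degenerate case $n\mid k^2$).
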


\begin{proof}
We claim that of the smallest nonnegative remainders of $k^2$ and $(n-k)^2$
modulo $2n$, one is $<n$, while the other is $>n$.
Indeed, suppose that some $k$ with $1\leq k\leq n-1$ is such that
$k^2=2ns+a$, $1\leq a\leq n-1$, for some integer $s$. Then
\[
(n-k)^2\equiv n^2+k^2\equiv n^2+a = 2n\cdot\tfrac{n-1}{2}+n+a
\equiv n+a\pmod{2n},
\]
which proves the claim. So there are as many remainders modulo $2n$ that are
$>n$ as are $<n$, namely $\frac{n-1}{2}$, and the excess in the sum is
$\frac{n-1}{2}\cdot n$, while obviously $(n-k)^2\equiv k^2\pmod{n}$. This proves
the identity \eqref{4.1}.
\end{proof}

We now state Shirali's result as another lemma.

\begin{lemma}[Shirali \cite{Sh}]\label{lem:5.2}
Let $n$ be a product of primes that are congruent to $1\pmod{4}$, not 
necessarily distinct, and write $n=P_n\cdot Q_n^2$, with $P_n$ squarefree. Then
\begin{equation}\label{5.2}
\sum_{k=1}^{\frac{n-1}{2}}{\rm Rem}(k^2\div P) = \frac{n(n-Q_n)}{4}.
\end{equation}
\end{lemma}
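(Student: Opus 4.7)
The hypothesis that every prime divisor of $n$ is congruent to $1\pmod 4$ has the key consequence, via the Chinese Remainder Theorem, that $-1$ is a quadratic residue modulo $n$. My plan is to fix an integer $t$ with $t^2\equiv -1\pmod n$ and exploit the bijection $\sigma:k\mapsto tk\bmod n$ of the complete residue system $\{0,1,\dots,n-1\}$. (I am reading the statement with ${\rm Rem}(k^2\div n)$ in place of the apparent typo ${\rm Rem}(k^2\div P)$, consistent with Lemma~\ref{lem:2.3}.)

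The crucial observation is that $(tk)^2\equiv -k^2\pmod n$, so
\[
{\rm Rem}\bigl((tk)^2\div n\bigr)=
\begin{cases} 0, & n\mid k^2,\\ n-{\rm Rem}(k^2\div n), & n\nmid k^2.\end{cases}
\]
Summing this identity over $k=0,1,\dots,n-1$ and reindexing the left side by $\sigma$, I would obtain
\[
\sum_{k=0}^{n-1}{\rm Rem}(k^2\div n)=n(n-A_n)-\sum_{k=0}^{n-1}{\rm Rem}(k^2\div n),
\]
where $A_n:=|\{k:0\leq k\leq n-1,\ n\mid k^2\}|$. Solving gives $\sum_{k=0}^{n-1}{\rm Rem}(k^2\div n)=\tfrac{1}{2}n(n-A_n)$, and the symmetry $(n-k)^2\equiv k^2\pmod n$ (together with the fact that $n$ is odd, so the involution $k\mapsto n-k$ is fixed-point-free on $\{1,\dots,n-1\}$) cuts this in half again to yield the desired formula, provided I can identify $A_n$ with $Q_n$.

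To compute $A_n$, I would reuse the divisibility argument from the start of the proof of Lemma~\ref{lem:2.2}(a): writing $n=P_nQ_n^2$ with $P_n$ squarefree, the condition $n\mid k^2$ is equivalent to $P_nQ_n\mid k$ (because $P_n$ squarefree forces $P_n\mid k$, and the remaining factor $Q_n^2$ then forces $Q_n\mid k/P_n$). Thus $A_n$ counts the multiples of $P_nQ_n$ in $\{0,1,\dots,n-1\}$, which is exactly $n/(P_nQ_n)=Q_n$.

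I do not anticipate a serious obstacle; the entire $\pmod 4$ hypothesis is absorbed in the one line ``fix $t$ with $t^2\equiv -1\pmod n$,'' and everything else is bookkeeping. The one place where I would take care is verifying that the two symmetries being deployed, namely $k\mapsto tk\bmod n$ (which flips remainders to their complements in $\{0,n\}$) and $k\mapsto n-k$ (which identifies equal remainders), act cleanly and independently so that the factors of $\tfrac{1}{2}$ compose correctly to give $\tfrac14 n(n-Q_n)$.
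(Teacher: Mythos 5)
Your proof is correct, and it is worth noting that the paper itself offers no argument here: Lemma~\ref{lem:5.2} is quoted from Shirali without proof, so your write-up is a genuine addition rather than a variant of something in the text. Your reading of ${\rm Rem}(k^2\div P)$ as a typo for ${\rm Rem}(k^2\div n)$ is the right one (this is how \eqref{5.2} is used in the proof of Proposition~\ref{prop:5.1}). All the load-bearing steps check out: $-1$ is a square modulo $n$ because it is a square modulo each prime power $p^{e}\| n$ with $p\equiv 1\pmod 4$ (Hensel) and then modulo $n$ by CRT; the reindexing by $k\mapsto tk\bmod n$ is legitimate because ${\rm Rem}\bigl((tk)^2\div n\bigr)$ depends only on $tk\bmod n$; the identification $A_n=Q_n$ follows from $n\mid k^2\Leftrightarrow P_nQ_n\mid k$, which is exactly the divisibility argument the paper uses in Lemma~\ref{lem:2.2}(a) (checking prime by prime, $p^{e}\mid k^2$ iff $p^{\lceil e/2\rceil}\mid k$, and $\prod p^{\lceil e/2\rceil}=P_nQ_n$); and since $n$ is odd, $k\mapsto n-k$ has no fixed point on $\{1,\dots,n-1\}$, so the two halvings compose to give the factor $\tfrac14$. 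The argument also cleanly explains where the hypothesis $p\equiv 1\pmod 4$ enters (solely through the existence of $t$), which is a conceptual bonus over simply citing \cite{Sh}. A quick sanity check with $n=25$ ($Q_n=5$, sum $=125=25\cdot 20/4$) confirms the formula, including the multiplicity bookkeeping for non-squarefree $n$.
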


\begin{proposition}\label{prop:5.1}
With $n$ as in Lemma~\ref{lem:5.2}, we have
\begin{equation}\label{5.1}
f(n)=\frac{1}{4}\left(Q_n-1\right)\qquad\hbox{and}\qquad 
f(2n)=\frac{1}{4}\left(Q_n-1-n\right).
\end{equation}
In particular, if $n$ is squarefree, then
\begin{equation}\label{5.1a}
f(n)=0\qquad\hbox{and}\qquad f(2n)=-\frac{n}{4}.
\end{equation}
\end{proposition}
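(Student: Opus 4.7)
The strategy is to apply Proposition~\ref{prop:2.1}(a) directly in both cases, feeding in Shirali's Lemma~\ref{lem:5.2} to evaluate the remainder sum. Since $n$ is a product of primes congruent to $1\pmod{4}$, we have $n\equiv 1\pmod{4}$, so in the decomposition $n=4\nu+r$ we get $r=1$ and $\nu=(n-1)/4$. In particular $2\nu=(n-1)/2$, which is exactly the upper limit in Lemma~\ref{lem:5.2}. Thus I would substitute
\[
\sum_{k=1}^{2\nu}\mathrm{Rem}(k^2\div n)=\frac{n(n-Q_n)}{4}
\]
into \eqref{2.2}, then subtract $(n^2-1)/12$ to pass from $F(n)$ to $f(n)$. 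The remaining work is pure algebra: expand $2\nu^2$ and $8\nu^2+6\nu+1=n(n+1)/2$ in terms of $n$, combine the resulting rational functions of $n$ over a common denominator, and watch the $Q_n$-linear terms $\tfrac12(Q_n-1)$ and $\tfrac14(n-Q_n)$ collapse cleanly to $(Q_n-1)/4$.

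For $f(2n)$, observe that $2n\equiv 2\pmod{4}$, so Proposition~\ref{prop:2.1}(a) still applies, now with $\nu=(n-1)/2$ and $2\nu=n-1$. Because $n$ is odd and $P_n$ is squarefree, the squarefree part of $2n$ is $2P_n$, and hence $Q_{2n}=Q_n$. The remainder sum $\sum_{k=1}^{n-1}\mathrm{Rem}(k^2\div 2n)$ is not directly given by Shirali's lemma, but Lemma~\ref{lem:4.1} reduces it to a sum modulo $n$, to which Lemma~\ref{lem:5.2} applies:
\[
\sum_{k=1}^{n-1}\mathrm{Rem}(k^2\div 2n)=\frac{n(n-1)}{2}+2\cdot\frac{n(n-Q_n)}{4}=\frac{n(2n-1-Q_n)}{2}.
\]
Plugging this into \eqref{2.2} with the new $\nu$, and subtracting $(4n^2-1)/12$, again reduces everything to an algebraic simplification that yields $(Q_n-1-n)/4$.

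The second identity in \eqref{5.1a} is then immediate: when $n$ is squarefree, $Q_n=1$, so $f(n)=0$ and $f(2n)=-n/4$.

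The only non-trivial conceptual step is recognizing that in the $f(2n)$ case one must bridge between remainders modulo $2n$ (as required by Proposition~\ref{prop:2.1}) and remainders modulo $n$ (as supplied by Shirali), and that Lemma~\ref{lem:4.1} is exactly the bridge. Everything else is bookkeeping: tracking the parameters $(\nu,r,Q)$ correctly for $n$ versus $2n$, and carrying out the routine polynomial simplification. I would expect the main risk of error to lie in the algebraic expansion of $8\nu^2+6\nu+1$ for the two different values of $\nu$, which I would compute explicitly as $n(n+1)/2$ for $n$ and $n(2n-1)$ for $2n$ before substituting.
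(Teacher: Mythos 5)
Your proposal is correct and follows essentially the same route as the paper: apply Proposition~\ref{prop:2.1}(a) with $\nu=(n-1)/4$ (resp.\ $\nu=(n-1)/2$ for $2n$), feed in Lemma~\ref{lem:5.2} directly for $f(n)$, and use Lemma~\ref{lem:4.1} as the bridge from remainders modulo $2n$ to remainders modulo $n$ for $f(2n)$. Your explicit evaluations $8\nu^2+6\nu+1=n(n+1)/2$ and $n(2n-1)$, the observation $Q_{2n}=Q_n$, and the combined remainder sum $\tfrac{n}{2}(2n-1-Q_n)$ all check out and match the paper's computation.
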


\begin{proof}
To obtain the first identity in \eqref{5.1}, we use \eqref{2.2} and note that
$\nu=(n-1)/4$. Then with \eqref{1.1} we get after some easy manipulations,
\[
f(n)=\frac{1}{2}Q_n-\frac{n+1}{4}
+\frac{1}{n}\sum_{k=1}^{\frac{n-1}{2}}{\rm Rem}(k^2\div n),
\]
and with \eqref{5.2} we get the first identity in \eqref{5.1}.

Next we replace $n$ by $2n$ in \eqref{2.2} and note that $\nu=(n-1)/2$. Then
with \eqref{1.1} we get
\begin{equation}\label{5.3}
f(2n)=\frac{1}{2}Q_n-\frac{3n}{4} 
+\frac{1}{2n}\sum_{k=1}^{n-1}{\rm Rem}(k^2\div 2n).
\end{equation}
Now, with \eqref{4.1} and \eqref{5.2} we have
\[
\sum_{k=1}^{n-1}{\rm Rem}(k^2\div 2n) = \frac{n(n-1)}{2}
+2\sum_{k=1}^{\frac{n-1}{2}}{\rm Rem}(k^2\div n) 
= n\left(n-\frac{Q_n+1}{2}\right);
\]
this, combined with \eqref{5.3}, gives the second identity in \eqref{5.1}.
Finally, when $n$ is squarefree then $Q_n=1$, and \eqref{5.1a} follows 
immediately from \eqref{5.1}.
\end{proof}

\section{Class numbers of imaginary quadratic fields}\label{sec:4}

To set the stage, we recall a few basic facts from elementary and algebraic
number theory; for further details we refer the reader to any introduction  to
algebraic number theory, for instance \cite{AW}. A {\it quadratic field} is a
field extension of the rationals of the form ${\mathbb Q}(\sqrt{n})$, where
$n\neq 0$ is a squarefree integer. When $n>0$, the field is called {\it real\/},
otherwise {\it imaginary\/}. The {\it discriminant} $d$ of 
${\mathbb Q}(\sqrt{n})$ is given by
\begin{equation}\label{3.1}
d = \begin{cases}
n &\hbox{when}\;\;n\equiv 1\pmod{4},\\
4n &\hbox{when}\;\;n\not\equiv 1\pmod{4}.
\end{cases}
\end{equation}
When $d<0$, the number of units $w(d)$ in the ring of integers of 
${\mathbb Q}(\sqrt{n})$ is 2 when $d<-4$, while $w(-3)=6$ and $w(-4)=4$.

Next we recall two extensions of the Legendre symbol. First,
the {\it Jacobi symbol} extends \eqref{2.23} as follows: if 
$m=p_1^{\alpha_1}\cdots p_r^{\alpha_r}$, where $p_1,\ldots,p_r$ are odd primes,
then 
\[
\left(\frac{a}{m}\right) = \prod_{j=1}^r\left(\frac{a}{p_j}\right)^{\alpha_j}.
\]
The {\it Kronecker symbol} extends this further by
\[
\left(\frac{a}{2}\right) = \begin{cases}
\left(\frac{2}{a}\right) &a\;\;\hbox{odd},\\
0&a\;\;\hbox{even};
\end{cases}\qquad
\left(\frac{a}{-1}\right) = \begin{cases}
1 &a>0,\\
-1 &a<0;
\end{cases}\qquad
\]
see, e.g., \cite[p.~36]{Co1}. Now we can state Dirichlet's class number 
formula for imaginary quadratic number fields, as given in \cite[p.~322]{AW}.
See also \cite[p.~342ff.]{BS}, where a proof can be found.

\begin{theorem}[Dirichlet, 1839]\label{thm:3.1}
Let $K$ be a quadratic number field with discriminant $d<0$. Then the class
number of $K$ is
\begin{equation}\label{3.2}
h(K)=\frac{-w(d)}{2|d|}\sum_{j=1}^{|d|-1}j\left(\frac{d}{j}\right),
\end{equation}
where $(\frac{d}{j})$ is the Kronecker symbol.
\end{theorem}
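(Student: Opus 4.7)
The plan is to derive Dirichlet's formula by combining the analytic class number formula for the Dedekind zeta function of $K$ with an explicit evaluation of $L(1,\chi_d)$, where $\chi_d(n)=(\tfrac{d}{n})$ is the Kronecker symbol viewed as a primitive character modulo $|d|$. This is the classical analytic route, and it is convenient because each step is independent and uses standard tools.

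First, I would establish that the Dedekind zeta function $\zeta_K(s)=\sum_{\mathfrak{a}}N(\mathfrak{a})^{-s}$ has a simple pole at $s=1$ with residue
\begin{equation*}
\operatorname*{Res}_{s=1}\zeta_K(s)=\frac{2\pi\,h(K)}{w(d)\sqrt{|d|}}.
\end{equation*}
The standard argument counts integral ideals of norm at most $X$ in each ideal class, which reduces to counting lattice points of $\mathcal{O}_K$ (modulo units) in a region of area proportional to $X$; here the factor $w(d)$ enters because a fundamental domain for the unit group on the lattice has $w(d)$-fold symmetry, and $\sqrt{|d|}/2$ is the covolume of $\mathcal{O}_K$ in $\mathbb{C}$. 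Multiplying by $h(K)$ ideal classes and dividing by the lattice covolume gives the stated residue.

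Next, I would use the Euler-product factorization $\zeta_K(s)=\zeta(s)L(s,\chi_d)$, which follows from a prime-by-prime comparison: a rational prime $p$ splits, is inert, or ramifies in $K$ exactly according to $\chi_d(p)=1$, $-1$, or $0$. Since $\zeta(s)$ has residue $1$ at $s=1$, dividing gives $L(1,\chi_d)=2\pi h(K)/(w(d)\sqrt{|d|})$, so
\begin{equation*}
h(K)=\frac{w(d)\sqrt{|d|}}{2\pi}\,L(1,\chi_d).
\end{equation*}
Then I would evaluate $L(1,\chi_d)$ using the finite Fourier expansion of $\chi_d$ in terms of its Gauss sum. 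Since $d<0$, the character $\chi_d$ is odd, and Gauss's evaluation of quadratic Gauss sums gives $\tau(\chi_d)=i\sqrt{|d|}$. Writing $\chi_d(n)=\tau(\chi_d)^{-1}\sum_{a=1}^{|d|-1}\chi_d(a)e^{2\pi ian/|d|}$, substituting into $L(1,\chi_d)=\sum_n \chi_d(n)/n$, interchanging sums, and using $\sum_{n\ge 1}e^{2\pi ian/|d|}/n=-\log(1-e^{2\pi ia/|d|})$ produces, after isolating the imaginary part (which captures the odd-character contribution), the closed form
\begin{equation*}
L(1,\chi_d)=-\frac{\pi}{|d|^{3/2}}\sum_{j=1}^{|d|-1}\chi_d(j)\,j.
\end{equation*}
Combining this with the previous display yields $h(K)=-\tfrac{w(d)}{2|d|}\sum_{j=1}^{|d|-1}(\tfrac{d}{j})\,j$, exactly \eqref{3.2}.

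The two main obstacles are the ideal-counting in the first step and the Gauss-sum evaluation in the last step. The counting argument is routine in the imaginary quadratic case (no regulator appears, and the unit group is finite), but care is needed to identify the correct fundamental domain and to obtain the error term rigorously so that the residue comes out with the right constant. The closed-form evaluation of $L(1,\chi_d)$ rests on the non-trivial fact that $\tau(\chi_d)^2=d$ for the real primitive character $\chi_d$; this is Gauss's celebrated determination of the sign of the quadratic Gauss sum, and it is what ultimately produces the minus sign in \eqref{3.2} so that $h(K)$ comes out positive.
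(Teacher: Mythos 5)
The paper does not actually prove Theorem~\ref{thm:3.1}: it quotes Dirichlet's formula from \cite[p.~322]{AW} and points to \cite[p.~342ff.]{BS} for a proof, so there is no internal argument to compare against. Your outline is the standard analytic derivation and is essentially the argument carried out in those references: the residue of $\zeta_K$ at $s=1$, the factorization $\zeta_K(s)=\zeta(s)L(s,\chi_d)$, and the closed-form evaluation of $L(1,\chi_d)$ for an odd primitive real character via the Gauss sum. The constants check out: with $\tau(\chi_d)=i\sqrt{|d|}$ one gets $L(1,\chi_d)=-\pi|d|^{-3/2}\sum_j\chi_d(j)j$, and combining this with $L(1,\chi_d)=2\pi h(K)/(w(d)\sqrt{|d|})$ yields \eqref{3.2} with the correct sign. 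Two points would need explicit attention in a full write-up. First, the whole argument requires that $\chi_d(n)=(\frac{d}{n})$ be a \emph{primitive} character modulo $|d|$; this holds precisely because $d$ as given by \eqref{3.1} is a fundamental discriminant, and primitivity is used both in matching the Euler factors of $\zeta_K$ with the splitting behaviour of primes and in the Gauss-sum step. Second, the interchange of the sum over $n$ with the finite Fourier expansion involves the conditionally convergent series $\sum_{n\geq 1}e^{2\pi ian/|d|}/n$ and should be justified by Abel summation, or by working at $s>1$ and letting $s\to 1^{+}$. You correctly single out the determination of the sign of the Gauss sum as the genuinely nontrivial input. As a sketch the proposal is sound; it is not so much a different route from the paper as the route the paper delegates to its cited sources.
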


In this paper we are mainly interested in quadratic fields of the form
${\mathbb Q}(\sqrt{-p})$, where $p$ is an odd prime. Due to the discriminant
identity \eqref{3.1}, it is convenient to distinguish between two cases. 
In what follows we use the notation $h(-p)=h(K)$ when 
$K={\mathbb Q}(\sqrt{-p})$.
It is quite likely that the identities in Corollary~\ref{cor:3.2} 
can be found in the literature.

\begin{corollary}\label{cor:3.2}
For any prime $p\equiv 1\pmod{4}$ we have
\begin{equation}\label{3.3}
h(-p)=\frac{1}{2}\sum_{\substack{j=1\\j\,odd}}^{2p-1}
(-1)^{\frac{j-1}{2}}\left(\frac{j}{p}\right),
\end{equation}
and equivalently
\begin{equation}\label{3.4}
h(-p)=\frac{1}{2p}\sum_{\substack{j=1\\j\,odd}}^{2p-1}
(-1)^{\frac{j-1}{2}}j\left(\frac{j}{p}\right),
\end{equation}
where $(\frac{j}{p})$ is the Legendre symbol defined by \eqref{2.23}.
\end{corollary}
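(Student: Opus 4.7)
The plan is to derive both identities directly from Dirichlet's class number formula (Theorem~\ref{thm:3.1}). Since $p\equiv 1\pmod 4$, the integer $-p$ satisfies $-p\equiv 3\pmod 4$, so by \eqref{3.1} the discriminant of $K={\mathbb Q}(\sqrt{-p})$ is $d=-4p$. For $p\geq 5$ we have $|d|>4$, so $w(d)=2$, and \eqref{3.2} becomes
\[
h(-p)=-\frac{1}{4p}\sum_{j=1}^{4p-1}j\left(\frac{-4p}{j}\right).
\]
The first task is to simplify the Kronecker symbol. By multiplicativity and the fact that $(\frac{-4p}{2})=0$ (since $-4p$ is even), the symbol vanishes for all even $j$. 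For odd $j$, the Kronecker symbol agrees with the Jacobi symbol, and the factorization $-4p=(-1)\cdot 4\cdot p$ combined with $(\frac{-1}{j})=(-1)^{(j-1)/2}$, $(\frac{4}{j})=1$, and quadratic reciprocity (valid since $p\equiv 1\pmod 4$) gives $(\frac{-4p}{j})=(-1)^{(j-1)/2}(\frac{j}{p})$. This reduces the sum to odd $j\in[1,4p-1]$.

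The second step is to cut the range in half. Write the reduced sum as $C=C_1+C_2$, where $C_1$ runs over odd $j\in[1,2p-1]$ (this is exactly the sum $B$ in \eqref{3.4}) and $C_2$ runs over odd $j\in[2p+1,4p-1]$. In $C_2$ substitute $j=2p+k$ with odd $k\in[1,2p-1]$. Because $p$ is odd, $(-1)^{(2p+k-1)/2}=-(-1)^{(k-1)/2}$, while the Legendre symbol is unchanged: $(\frac{2p+k}{p})=(\frac{k}{p})$. Expanding $(2p+k)$ then produces two sums, namely $-2pA-B$ in the notation of \eqref{3.3}--\eqref{3.4}. Adding $C_1=B$ gives $C=-2pA$, and therefore $h(-p)=-C/(4p)=A/2$, which is precisely \eqref{3.3}.

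The third step is to prove equivalence of \eqref{3.3} and \eqref{3.4}, i.e.\ that $B=pA$. Apply the substitution $j\mapsto 2p-j$ to $B$. Since $p\equiv 1\pmod 4$, we have $(\frac{2p-j}{p})=(\frac{-j}{p})=(\frac{j}{p})$; and a short parity check, using that $p$ is odd, yields $(-1)^{(2p-j-1)/2}=(-1)^{(j-1)/2}$. So the substitution leaves the sign and Legendre factor untouched and replaces $j$ by $2p-j$, giving $B=2pA-B$, hence $B=pA$. Dividing \eqref{3.3} by $p$ then produces \eqref{3.4}.

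The main obstacle is purely bookkeeping: the chain of Kronecker/Jacobi identities used to simplify $(\frac{-4p}{j})$, and the two parity computations (for $j\mapsto j+2p$ and $j\mapsto 2p-j$) on the factor $(-1)^{(j-1)/2}$, all of which turn crucially on $p$ being odd and congruent to $1\pmod 4$. No subtlety of class field theory is needed once the Kronecker symbol is handled correctly; the proof is then a telescoping of the Dirichlet sum under two range-folding involutions.
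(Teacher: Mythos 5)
Your proposal is correct and follows essentially the same route as the paper: Dirichlet's formula with $d=-4p$, reduction of the Kronecker symbol to $(-1)^{(j-1)/2}(\tfrac{j}{p})$ for odd $j$, a fold of the range $[1,4p-1]$ onto $[1,2p-1]$ via $j\mapsto j+2p$ to get \eqref{3.3}, and the involution $j\mapsto 2p-j$ (with the same two parity and Legendre-symbol identities) to get \eqref{3.4}. The only difference is organizational: you prove the relation $B=pA$ directly by folding $B$ onto itself, whereas the paper re-sums the upper half of the Dirichlet sum a second way to reach $-3B$; both are the same computation.
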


\begin{proof}
When $p\equiv 1\pmod{4}$, then by \eqref{3.1} we have $d=-4p$. Using 
properties of the Kronecker and Jacobi symbols, not all listed above (such as
quadratic reciprocity), we have $(\frac{-4p}{j})=0$ when $j$ is even, while
\[
\left(\frac{-4p}{j}\right)=\left(\frac{-4}{j}\right)\left(\frac{p}{j}\right)
=\left(\frac{-1}{j}\right)\left(\frac{p}{j}\right)
=(-1)^{\frac{j-1}{2}}\left(\frac{j}{p}\right)\qquad (j\;\hbox{odd}).
\]
Since $w(d)=2$, \eqref{3.2} now gives
\begin{align}
h(-p)&=\frac{-1}{4p}\sum_{\substack{j=1\\j\,odd}}^{4p-1}
(-1)^{\frac{j-1}{2}}j\left(\frac{j}{p}\right)\label{3.5} \\
&=\frac{-1}{4p}\sum_{\substack{j=1\\j\,odd}}^{2p-1}
\left((-1)^{\frac{j-1}{2}}j\left(\frac{j}{p}\right)
+(-1)^{\frac{2p+j-1}{2}}(2p+j)\left(\frac{2p+j}{p}\right)\right).\nonumber
\end{align}
We note that
\[
(-1)^{\frac{2p+j-1}{2}} = - (-1)^{\frac{j-1}{2}}\quad\hbox{and}\quad
\left(\frac{2p+j}{p}\right) = \left(\frac{j}{p}\right),
\]
so \eqref{3.5} simplifies to
\[
h(-p)=\frac{-1}{4p}\sum_{\substack{j=1\\j\,odd}}^{2p-1}
(-1)^{\frac{j-1}{2}}(-2p)\left(\frac{j}{p}\right),
\]
which immediately gives \eqref{3.3}.

To obtain \eqref{3.4}, we sum the last term in \eqref{3.5} differently, namely
\begin{align}
\sum_{\substack{j=1\\j\,odd}}^{2p-1}
&(-1)^{\frac{2p+j-1}{2}}(2p+j)\left(\frac{2p+j}{p}\right)\label{3.6}\\
&=-2\sum_{\substack{j=1\\j\,odd}}^{2p-1}
(-1)^{\frac{j-1}{2}}j\left(\frac{j}{p}\right)
-\sum_{\substack{j=1\\j\,odd}}^{2p-1}
(-1)^{\frac{j-1}{2}}(2p-j)\left(\frac{j}{p}\right) \nonumber\\
&=-2\sum_{\substack{j=1\\j\,odd}}^{2p-1}
(-1)^{\frac{j-1}{2}}j\left(\frac{j}{p}\right)
-\sum_{\substack{j=1\\j\,odd}}^{2p-1}
(-1)^{\frac{2p-j-1}{2}}j\left(\frac{2p-j}{p}\right),\nonumber
\end{align}
where we have reversed the order of summation in the last sum. Now
\[
\left(\frac{2p-j}{p}\right) = \left(\frac{-j}{p}\right)
= \left(\frac{-1}{p}\right)\left(\frac{j}{p}\right)
= \left(\frac{j}{p}\right),
\]
by the first complementary law of quadratic reciprocity, since 
$p\equiv 1\pmod{4}$. Furthermore,
\[
(-1)^{\frac{2p-j-1}{2}} = (-1)^{\frac{j-1}{2}}\quad\hbox{since}\quad
\frac{2p-j-1}{2}-\frac{j-1}{2}=p-j\equiv 0\pmod{2}
\]
because both $p$ and $j$ are odd. Hence with \eqref{3.6},
\[
\sum_{\substack{j=1\\j\,odd}}^{2p-1}
(-1)^{\frac{2p+j-1}{2}}(2p+j)\left(\frac{2p+j}{p}\right)
=-3\sum_{\substack{j=1\\j\,odd}}^{2p-1}
(-1)^{\frac{j-1}{2}}j\left(\frac{j}{p}\right),
\]
and combining this with \eqref{3.5}, we get \eqref{3.4}.
\end{proof}

The next corollary is well known and is attributed to Jacobi; see, e.g.,
\cite[Ch.~6]{Da}. For the sake of completeness we show how it follows from 
Dirichlet's formula.

\begin{corollary}\label{cor:3.3}
For any prime $p\equiv 3\pmod{4}$, $p\neq 3$, we have
\begin{equation}\label{3.7}
h(-p)=-\frac{1}{p}\sum_{j=1}^{p-1}j\left(\frac{j}{p}\right),
\end{equation}
where $(\frac{j}{p})$ is the Legendre symbol defined by \eqref{2.23}.
\end{corollary}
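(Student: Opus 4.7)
The plan is to feed Dirichlet's formula (Theorem~\ref{thm:3.1}) directly into $K = \mathbb{Q}(\sqrt{-p})$ and then convert its Kronecker symbols into Legendre symbols. Since $p\equiv 3\pmod 4$ gives $-p\equiv 1\pmod 4$, formula \eqref{3.1} yields the discriminant $d=-p$. For $p\neq 3$ we have $d\leq -7 < -4$, so $w(d)=2$, and \eqref{3.2} collapses to
\[
h(-p) = -\frac{1}{p}\sum_{j=1}^{p-1} j\left(\frac{-p}{j}\right).
\]
The entire remaining task is then to show that for every $1\leq j\leq p-1$,
\[
\left(\frac{-p}{j}\right) = \left(\frac{j}{p}\right),
\]
where the left-hand side is the Kronecker symbol and the right-hand side is the Legendre symbol.

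For odd $j$ (automatically coprime to $p$) I would mimic the opening computation in the proof of Corollary~\ref{cor:3.2}, but with the sign coming from $p\equiv 3\pmod 4$ rather than from $p\equiv 1\pmod 4$. Factoring $\left(\frac{-p}{j}\right) = \left(\frac{-1}{j}\right)\left(\frac{p}{j}\right)$, the first complementary law gives $\left(\frac{-1}{j}\right) = (-1)^{(j-1)/2}$, while Jacobi reciprocity, together with the fact that $(p-1)/2$ is odd, gives $\left(\frac{p}{j}\right) = (-1)^{(j-1)/2}\left(\frac{j}{p}\right)$. The two sign factors cancel, leaving $\left(\frac{j}{p}\right)$.

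For even $j$, write $j = 2^k m$ with $m$ odd. Multiplicativity of the Kronecker symbol in the lower argument yields
\[
\left(\frac{-p}{j}\right) = \left(\frac{-p}{2}\right)^k\left(\frac{-p}{m}\right),
\]
and the odd-$j$ case just established gives $\left(\frac{-p}{m}\right) = \left(\frac{m}{p}\right)$. So only $\left(\frac{-p}{2}\right) = \left(\frac{2}{p}\right)$ remains. Using the Kronecker evaluation $\left(\frac{a}{2}\right) = (-1)^{(a^2-1)/8}$ for odd $a$ and the standard value of $\left(\frac{2}{p}\right)$, one sees that both sides equal $-1$ when $p\equiv 3\pmod 8$ and $+1$ when $p\equiv 7\pmod 8$. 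Combining, $\left(\frac{-p}{j}\right) = \left(\frac{2}{p}\right)^k\left(\frac{m}{p}\right) = \left(\frac{j}{p}\right)$.

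The main obstacle is not any single ingredient — reciprocity and the complementary laws are standard — but the careful bookkeeping at the prime $2$, where the Kronecker symbol rather than the Jacobi symbol is in play, and where one must also be sure that the hypothesis $p\neq 3$ is exactly what is needed to guarantee $w(d)=2$. Once those checks are in hand, the verified identity $\left(\frac{-p}{j}\right) = \left(\frac{j}{p}\right)$ for $1\leq j\leq p-1$ combines with the reduced form of Dirichlet's formula displayed above to give \eqref{3.7} at once.
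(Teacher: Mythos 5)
Your proposal is correct and follows the same skeleton as the paper's proof: specialize Dirichlet's formula \eqref{3.2} to $d=-p$ with $w(d)=2$ (which is exactly where the hypothesis $p\neq 3$ enters), and then reduce everything to the single identity $\left(\frac{-p}{j}\right)=\left(\frac{j}{p}\right)$ for $1\leq j\leq p-1$. The only divergence is in how that identity is established. The paper dispatches it in one line by invoking the reciprocity property of the Kronecker symbol (Proposition~2.2.6 of \cite{Co1}), writing $\left(\frac{-p}{j}\right)=\left(\frac{j}{-p}\right)=\left(\frac{j}{-1}\right)\left(\frac{j}{p}\right)=\left(\frac{j}{p}\right)$, which works uniformly in $j$ because $-p\equiv 1\pmod{4}$. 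You instead re-derive the identity from more elementary ingredients: for odd $j$ you combine the first complementary law with Jacobi reciprocity, using that $(p-1)/2$ is odd so the two factors $(-1)^{(j-1)/2}$ cancel, and for even $j=2^k m$ you peel off the powers of $2$ and verify $\left(\frac{-p}{2}\right)=\left(\frac{2}{p}\right)$ by checking both residue classes of $p$ modulo $8$. Your version is longer but more self-contained, avoiding the Kronecker reciprocity law as a black box, while the paper's is shorter at the cost of citing Cohen; both arguments are complete, and your sign bookkeeping at the prime $2$ and in the odd-$j$ case checks out.
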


\begin{proof}
When $p\equiv 3\pmod{4}$, then by \eqref{3.1} we have $d=-p$, and when
$p\neq 3$, then $w(d)=2$. Furthermore, using Proposition~2.2.6 in
\cite[p.~36]{Co1}, we have the Kronecker symbol identities
\[
\left(\frac{d}{j}\right)=\left(\frac{-p}{j}\right)=\left(\frac{j}{-p}\right)
=\left(\frac{j}{-1}\right)\left(\frac{j}{p}\right)=\left(\frac{j}{p}\right).
\]
This, together with \eqref{3.2}, yields \eqref{3.7}.
\end{proof}

As a first application of Corollary~\ref{cor:3.3} we derive the identity
\eqref{1.6}.

\begin{proof}[Proof of Proposition~\ref{prop:1.2}]
We use \eqref{2.2} with $n=p\equiv 3\pmod{4}$, so that $\nu=\frac{p-3}{4}$. Then
\begin{align}
\sum_{k=1}^{2\nu}{\rm Rem}(k^2\div n)
&=\sum_{k=1}^{\frac{p-1}{2}}{\rm Rem}(k^2\div p)
-{\rm Rem}((\tfrac{p-1}{2})^2\div p)\label{3.8}\\
&=\frac{p(p-1)}{4}+\frac{1}{2}\sum_{j=1}^{p-1}j\left(\frac{j}{p}\right)
-\frac{p+1}{4},\nonumber
\end{align}
where we have used Lemma~\ref{lem:2.3} and the fact that
\begin{equation}\label{3.9}
\left(\frac{p-1}{2}\right)^2-\frac{p+1}{4}=p\cdot\frac{p-3}{4}\equiv 0\pmod{p}.
\end{equation}
Hence with the class number formula \eqref{3.7}, the identity \eqref{3.8}
becomes
\[
\sum_{k=1}^{2\nu}{\rm Rem}(k^2\div n)
=\frac{p^2-2p-1}{4}-\frac{p}{2}\cdot h(-p).
\]
Finally, we substitute this into \eqref{2.2} and recall that 
$f(p)=F(p)-(p^2-1)/12$; then we get the desired identity \eqref{1.6} after some
straightforward manipulations, using again the fact that $Q_p=1$.
\end{proof}

\section{Evaluating $f(2p)$ and $f(4p)$}\label{sec:5}

Before proceeding to more general arguments of the function $f(n)$, we consider
$n=2p$ and $n=4p$, where $p$ is an odd prime. The case $n=2p$ turns out to be
quite straightforward and is based on Lemma~\ref{lem:4.1}.

\begin{proposition}\label{prop:4.2}
For any prime $p\geq 3$ we have
\begin{equation}\label{4.2}
f(2p)= -\frac{p}{4} + \frac{1}{2p}\sum_{j=1}^{p-1}j\left(\frac{j}{p}\right),
\end{equation}
and in particular, for $p\geq 5$, 
\begin{equation}\label{4.3}
f(2p)=\begin{cases}
-\frac{p}{4} &\hbox{when}\;\;p\equiv 1\pmod{4},\\
-\frac{p}{4}-\frac{1}{2}h(-p) &\hbox{when}\;\;p\equiv 3\pmod{4}.
\end{cases}
\end{equation}
\end{proposition}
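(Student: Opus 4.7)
The plan is to apply Proposition~\ref{prop:2.1}(a) directly with $n=2p$. Since $p$ is odd, $2p\equiv 2\pmod 4$, so in the notation of \eqref{2.1a} we have $r=2$ and $\nu=(p-1)/2$; moreover $2p$ is squarefree, so $P_{2p}=2p$ and $Q_{2p}=1$, which kills the $\frac12(Q_n-1)$ term. The upper limit of summation becomes $2\nu=p-1$, leaving
\[
F(2p)=2\nu^2-\frac{\nu}{6p}(8\nu^2+6\nu+1)
+\frac{1}{2p}\sum_{k=1}^{p-1}{\rm Rem}(k^2\div 2p).
\]

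Next I would evaluate the remainder sum by combining the two tools already proved. Lemma~\ref{lem:4.1} (applied with $n=p$) rewrites the sum over remainders mod $2p$ as
\[
\sum_{k=1}^{p-1}{\rm Rem}(k^2\div 2p)=\frac{p(p-1)}{2}+2\sum_{k=1}^{(p-1)/2}{\rm Rem}(k^2\div p),
\]
and then Lemma~\ref{lem:2.3} turns the inner sum into $\frac{p(p-1)}{4}+\frac12\sum_{j=1}^{p-1}j(\tfrac{j}{p})$. Adding gives a clean expression
\[
\sum_{k=1}^{p-1}{\rm Rem}(k^2\div 2p)=p(p-1)+\sum_{j=1}^{p-1}j\left(\frac{j}{p}\right),
\]
which is the bridge between $F(2p)$ and the Legendre-symbol sum that drives the class number formulas.

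I would then substitute back, form $f(2p)=F(2p)-\frac{4p^2-1}{12}$, and simplify. With $\nu=(p-1)/2$ the polynomial parts collapse: expanding shows $2\nu^2-\frac{\nu}{6p}(8\nu^2+6\nu+1)+\frac{p-1}{2}=\frac{(p-1)(4p+1)}{12}$, and subtracting $\frac{4p^2-1}{12}=\frac{(2p-1)(2p+1)}{12}$ produces exactly $-p/4$. The surviving Legendre-symbol contribution is $\frac{1}{2p}\sum_{j=1}^{p-1}j(\tfrac{j}{p})$, giving \eqref{4.2}.

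Finally, for the case analysis \eqref{4.3}: when $p\equiv 1\pmod 4$, identity \eqref{2.25} makes the sum vanish, yielding $f(2p)=-p/4$; when $p\equiv 3\pmod 4$ and $p\neq 3$, Corollary~\ref{cor:3.3} gives $\sum_{j=1}^{p-1}j(\tfrac{j}{p})=-p\,h(-p)$, so the Legendre term becomes $-\tfrac12 h(-p)$. No genuine obstacle stands in the way; the only mild chore is the arithmetic telescoping in the simplification step, and one should keep track that the hypothesis $p\geq 5$ in \eqref{4.3} is just what is needed to invoke Corollary~\ref{cor:3.3} (which excludes $p=3$).
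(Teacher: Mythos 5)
Your proposal is correct and follows essentially the same route as the paper: apply \eqref{2.2} with $n=2p$, $\nu=(p-1)/2$, $Q_{2p}=1$, convert the remainder sum via Lemma~\ref{lem:4.1} and Lemma~\ref{lem:2.3}, and then use \eqref{2.25} and Corollary~\ref{cor:3.3} for the two congruence classes. The arithmetic simplification you carried out (reducing the polynomial part to $-p/4$) checks out, so there is nothing to add.
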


\begin{proof}
We use \eqref{2.2} with $n=2p$ and note that $Q_{2p}=1$. Then 
$\nu=\frac{p-1}{2}$, and with \eqref{4.1} for $n=p$ we get after some easy 
manipulations,
\[
f(2p)=\frac{1}{4}-\frac{p}{2}
+\frac{1}{p}\sum_{k=1}^{\frac{p-1}{2}}{\rm Rem}(k^2\div p).
\]
This, with Lemma~\ref{lem:2.3}, gives \eqref{4.2}.

When $p\equiv 1\pmod{4}$, we already saw in \eqref{2.25} that the sum on the 
right of \eqref{4.2} vanishes, which leads to the first part of \eqref{4.3}. For
$p\equiv 3\pmod{4}$, the class number formula \eqref{3.7} immediately gives the
second part of \eqref{4.3}.
\end{proof}

The first part of \eqref{4.3} is actually a special case of \eqref{5.1a}.
We now turn to the more challenging determination of $f(4p)$.

\begin{proposition}\label{prop:4.3}
For any prime $p\geq 5$ we have
\begin{equation}\label{4.4}
f(4p)= \frac{1}{4} + \frac{p}{2} -\delta(p)h(-p),
\end{equation}
where 
\[
\delta(p)=\begin{cases}
1/2, & p\equiv 1\pmod{4},\\
2, & p\equiv 3\pmod{8},\\
1, & p\equiv 7\pmod{8}.
\end{cases}
\]
\end{proposition}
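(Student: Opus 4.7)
The starting point is Corollary~\ref{cor:2.2} applied to $n=4p$. Since the prime $p$ is squarefree, $n/4 = p$ forces $\overline{Q}_{4p} = 1$, so \eqref{2.3a} reduces the task to evaluating
\[
S := \sum_{k=1}^{2p} \mathrm{Rem}(k^2\div 4p),\qquad f(4p) = 1 - \tfrac{p}{2} + \tfrac{1}{4p}\,S,
\]
and the target is equivalent to showing $S = 4p^{2}-3p - 4p\,\delta(p)\,h(-p)$.

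I would split $S$ by the parity of $k$. For even $k=2m$ with $1\le m\le p$, one has the clean identity $\mathrm{Rem}(4m^{2}\div 4p) = 4\,\mathrm{Rem}(m^{2}\div p)$; combining this with the pairing $m\leftrightarrow p-m$ and Lemma~\ref{lem:2.3} gives
\[
S_{\mathrm{even}} = 2p(p-1) + 4\sum_{j=1}^{p-1} j\left(\frac{j}{p}\right).
\]
By \eqref{2.25} this equals $2p(p-1)$ when $p\equiv 1\pmod{4}$, and by Corollary~\ref{cor:3.3} it equals $2p(p-1) - 4p\,h(-p)$ when $p\equiv 3\pmod{4}$.

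For odd $k\in[1,2p-1]$ I would invoke the Chinese Remainder Theorem: because $k^{2}\equiv 1\pmod{4}$, the remainder has the form $\mathrm{Rem}(k^{2}\div 4p) = r_{k} + a_{k}p$, where $r_{k} = k^{2}\bmod p$ and $a_{k}\in\{0,1,2,3\}$ is the unique value making the sum $\equiv 1\pmod{4}$; in particular $a_{k}$ depends only on $r_{k}\bmod 4$ and $p\bmod 4$. As $k$ ranges over the $p$ odd integers in $[1,2p-1]$, the map $k\mapsto k\bmod p$ is a bijection onto $\{0,1,\ldots,p-1\}$, so each nonzero quadratic residue appears twice among the $r_{k}$ and $r_{k}=0$ arises once (from $k=p$). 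Consequently
\[
S_{\mathrm{odd}} = \sum_{k\ \mathrm{odd}} r_{k} + p\sum_{k\ \mathrm{odd}} a_{k},
\]
where the first piece reduces to $2\sum_{j=1}^{(p-1)/2}\mathrm{Rem}(j^{2}\div p)$ via Lemma~\ref{lem:2.3} and the second becomes $a(0) + \sum_{s=1}^{p-1} a(s)\bigl(1+(s/p)\bigr)$, whose unweighted part $\sum a(s)$ is elementary to count by residue class mod $4$.

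The heart of the proof is thus the character sum $\sum_{s=1}^{p-1} a(s)(s/p)$. Writing $a(s)$ in the form $2\mp(-1)^{s/2}$ for even $s$ (with the sign dictated by $p\bmod 4$) and $1-(-1)^{(s-1)/2}$ for odd $s$ decomposes the sum into four recognizable pieces: $\sum_{s\ \mathrm{odd}}(s/p)$, $\sum_{s\ \mathrm{even}}(s/p)$, $\sum_{s\ \mathrm{odd}}(-1)^{(s-1)/2}(s/p)$ and $\sum_{s\ \mathrm{even}}(-1)^{s/2}(s/p)$. For $p\equiv 1\pmod{4}$, the unweighted halves vanish because $\chi(-1)=1$ forces $\sum_{t=1}^{(p-1)/2}(t/p)=0$, and the weighted halves recombine to $2h(-p)$ via the shift $s\mapsto s+p$ used in Corollary~\ref{cor:3.2}, yielding $\delta(p)=1/2$. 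For $p\equiv 3\pmod{4}$, Corollary~\ref{cor:3.3} handles the $j\,(j/p)$-weighted sums, while the classical refinement $\sum_{t=1}^{(p-1)/2}(t/p) = (2-\chi(2))\,h(-p)$ controls the unweighted halves; since $\chi(2) = (2/p)$ equals $-1$ for $p\equiv 3\pmod{8}$ and $+1$ for $p\equiv 7\pmod{8}$, assembling $S_{\mathrm{even}}+S_{\mathrm{odd}}$ produces the remaining two values $\delta(p) = 2$ and $\delta(p) = 1$. The main obstacle will be accurate sign-bookkeeping across the four character sums and confirming that the $\chi(2)$-dependence matches $\delta(p)$ exactly in each congruence class modulo $8$.
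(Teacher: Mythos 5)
Your proposal is correct, and it takes a genuinely different route from the paper's. The paper makes the same initial reduction to $\sum_{k=1}^{2p}{\rm Rem}(k^2\div 4p)$ and the same even/odd separation, but it then evaluates the odd-$k$ contribution by two unrelated devices: for $p\equiv 1\pmod 4$ it identifies the quantities $2p-{\rm Rem}(k^2\div 4p)$ with signed quadratic residues lying in fixed classes modulo $4$ (Lemmas~\ref{lem:4.4}--\ref{lem:4.6}), while for $p\equiv 3\pmod 4$ it compares the sum with $4\sum{\rm Rem}(k^2\div p)$, tracking corrections of $3p$ or $-p$ and counting quadratic residues below $p/4$ via a quarter-interval character sum quoted from \cite{JM} (Lemma~\ref{lem:4.7}). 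Your decomposition ${\rm Rem}(k^2\div 4p)=r_k+a(r_k)\,p$ by the Chinese Remainder Theorem treats both congruence classes uniformly and funnels everything into the single character sum $\sum_{s}a(s)\left(\frac{s}{p}\right)$; I checked that your table for $a(s)$ is right and that the bookkeeping closes in all three cases, giving $S=4p^2-3p-4p\,\delta(p)h(-p)$ as required. One point worth making explicit in a write-up: for $p\equiv 3\pmod 4$ the substitution $s\mapsto p-s$ shows
$\sum_{s\,{\rm odd}}(-1)^{(s-1)/2}\left(\frac{s}{p}\right)=\sum_{s\,{\rm even}}(-1)^{s/2}\left(\frac{s}{p}\right)$,
so these two pieces cancel in your combination and only the unweighted halves survive; this cancellation is exactly what makes your single cited input $\sum_{t=1}^{(p-1)/2}\left(\frac{t}{p}\right)=\left(2-\left(\frac{2}{p}\right)\right)h(-p)$ sufficient, with $\left(\frac{2}{p}\right)=\pm1$ producing $\delta(p)=1$ or $2$. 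The trade-off between the two proofs: yours is shorter and more unified, replacing three lemmas by one computation, but it imports the half-interval class-number formula as an external classical fact; the paper's $p\equiv 1\pmod 4$ case is self-contained given Corollary~\ref{cor:3.2}, and it quotes a comparable quarter-interval formula only in the $p\equiv 3\pmod 4$ case.
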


We prove this through a sequence of lemmas which may be of interest in their
own rights.

\begin{lemma}\label{lem:4.4}
For any prime $p\equiv 1\pmod{4}$, let
\begin{equation}\label{4.5}
S_1(p):=\sum_{j\in A_1(p)}j,\qquad S_3(p):=\sum_{j\in A_3(p)}j,
\end{equation}
where $A_r(p):=\{j\in{\mathbb N}\mid j\leq 2p-1, j\equiv r\pmod{4},
(\frac{j}{p})=1\}$. Then
\begin{equation}\label{4.6}
h(-p)=\frac{1}{p}\left(S_1(p)-S_3(p)\right).
\end{equation}
\end{lemma}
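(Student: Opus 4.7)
The plan is to derive \eqref{4.6} directly from the second form of Dirichlet's formula given in Corollary~\ref{cor:3.2}, namely \eqref{3.4}.

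First, for odd $j$ one has $(-1)^{\frac{j-1}{2}}=+1$ when $j\equiv 1\pmod{4}$ and $-1$ when $j\equiv 3\pmod{4}$. For $r\in\{1,3\}$, let $T_r^{\pm}$ denote the sum of those $j$ with $1\le j\le 2p-1$, $j\equiv r\pmod{4}$, and $\left(\frac{j}{p}\right)=\pm 1$, so that $S_1(p)=T_1^+$ and $S_3(p)=T_3^+$ in the notation of \eqref{4.5}. Splitting the sum in \eqref{3.4} according to the residue class modulo~$4$ and according to the value of the Legendre symbol (the term $j=p$ contributes nothing since $\left(\frac{p}{p}\right)=0$), we obtain
\[
h(-p)=\frac{1}{2p}\bigl((T_1^+-T_1^-)-(T_3^+-T_3^-)\bigr).
\]

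The next step is to show that $T_1^++T_1^-=T_3^++T_3^-$. Write $p=4k+1$. The set of odd $j$, $1\le j\le 2p-1$, with $j\equiv 1\pmod{4}$ is the arithmetic progression $1,5,\ldots,2p-1$ of length $2k+1$, and since $p\equiv 1\pmod{4}$ it contains $p$ itself; the contribution of $p$ is not counted in $T_1^{\pm}$. The set with $j\equiv 3\pmod{4}$ is $3,7,\ldots,2p-3$ of length $2k$ and contains no multiple of $p$. The involution $j\mapsto 2p-j$ preserves residues modulo~$4$ (since $2p\equiv 2\pmod{4}$) and has $j=p$ as its only fixed point in the odd range $[1,2p-1]$. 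It therefore partitions the elements contributing to each of the two sums into $k$ pairs, each of sum $2p$, giving $T_1^++T_1^-=T_3^++T_3^-=2kp$.

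Finally, writing $S:=T_1^++T_1^-=T_3^++T_3^-$ and substituting $T_r^-=S-T_r^+$ collapses the bracketed expression to $2(T_1^+-T_3^+)$, yielding
\[
h(-p)=\frac{1}{p}\bigl(T_1^+-T_3^+\bigr)=\frac{1}{p}\bigl(S_1(p)-S_3(p)\bigr),
\]
which is \eqref{4.6}. The only real obstacle is organizational: one must keep careful track of the sign arising from $(-1)^{\frac{j-1}{2}}$, the exclusion of the single vanishing term $j=p$ from $B_1$, and the symmetry provided by $j\mapsto 2p-j$. No input beyond Dirichlet's formula \eqref{3.4} is required.
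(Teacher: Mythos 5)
Your proof is correct and takes essentially the same route as the paper's: both deduce \eqref{4.6} from the class number formula \eqref{3.4} by splitting the odd $j\le 2p-1$ according to their residue modulo $4$ and the value of $\left(\frac{j}{p}\right)$, and both dispose of the ``unsigned'' part via the symmetry $j\mapsto 2p-j$ (your identity $T_1^++T_1^-=T_3^++T_3^-=2kp$ is exactly the paper's evaluation $\sum_{j\,\mathrm{odd}}(-1)^{\frac{j-1}{2}}j=p$ after accounting for the excluded term $j=p$). The argument is sound; only the stray reference to an undefined set ``$B_1$'' (presumably $A_1(p)$) needs tidying.
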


\begin{proof}
Using the definition of the Legendre symbol, we can rewrite
\[
S_1(p)=\sum_{\substack{j=1\\j\equiv 1 (4)}}^{2p-1}
\frac{j}{2}\left(\left(\frac{j}{p}\right)+1\right)-\frac{p}{2},\qquad
S_3(p)=\sum_{\substack{j=3\\j\equiv 3 (4)}}^{2p-1}
\frac{j}{2}\left(\left(\frac{j}{p}\right)+1\right),
\]
where in the case of $S_1(p)$ we needed to subtract $p/2$ since 
$(\frac{j}{p})=0$ for $j=p$. These two identities then combine to give
\begin{equation}\label{4.7}
\frac{S_1(p)-S_3(p)}{p}=\frac{1}{2p}\sum_{\substack{j=1\\j\,odd}}^{2p-1}
(-1)^{\frac{j-1}{2}}j\left(\frac{j}{p}\right)-\frac{1}{2}
+\frac{1}{2p}\sum_{\substack{j=1\\j\,odd}}^{2p-1}(-1)^{\frac{j-1}{2}}j.
\end{equation}
The first sum on the right is the right-hand side of \eqref{3.4}, while the
second sum can be evaluated by splitting it into two:
\[
\frac{1}{2}\sum_{\substack{j=1\\j\,odd}}^{2p-1}(-1)^{\frac{j-1}{2}}j
+\frac{1}{2}\sum_{\substack{j=1\\j\,odd}}^{2p-1}(-1)^{\frac{2p-j+1}{2}}(2p-j)
=\frac{2p}{2}\sum_{\substack{j=1\\j\,odd}}^{2p-1}(-1)^{\frac{j-1}{2}} = p.
\]
This and \eqref{3.4} substituted into \eqref{4.7} then gives \eqref{4.6}.
\end{proof}

\begin{lemma}\label{lem:4.5}
Let $p\equiv 1\pmod{4}$ be a prime. Then
\begin{equation}\label{4.8}
h(-p)=\frac{1}{p}\sum_{\substack{k=1\\k\,odd}}^{p-1}
\left(2p-{\rm Rem}(k^2\div 4p)\right).
\end{equation}
\end{lemma}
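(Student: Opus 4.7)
The plan is to invoke Lemma~\ref{lem:4.4}, which gives $h(-p) = (S_1(p) - S_3(p))/p$, and to show that the sum on the right of \eqref{4.8} equals $S_1(p) - S_3(p)$. Write $r_k := \mathrm{Rem}(k^2 \div 4p)$ throughout.

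First I would analyze the image set $\{r_k : k\ \text{odd},\ 1 \leq k \leq p-1\}$. Since $k$ is odd and coprime to $p$, each $r_k$ satisfies $r_k \equiv 1 \pmod{4}$ and $(r_k/p) = 1$. A standard injectivity argument (if $k_1^2 \equiv k_2^2 \pmod{4p}$ then $k_1 \equiv \pm k_2 \pmod{p}$, and $k_1 + k_2$ being even rules out the minus sign) shows the $r_k$ are distinct. Counting the set $B := \{j \in [1, 4p-1] : j \equiv 1 \pmod{4},\ (j/p) = 1\}$: among the $p$ integers $\equiv 1 \pmod 4$ in this range, each residue class modulo $p$ appears exactly once (since $\gcd(4,p) = 1$), so exactly one (namely $p$ itself) has Legendre symbol $0$, and half of the remaining $p-1$ are QRs, giving $|B| = (p-1)/2$. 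Hence $\{r_k\} = B$. Splitting $B = B_1 \sqcup B_2$ at $j = 2p$, we have $B_1 = A_1(p)$, while the substitution $j = 2p + j'$ identifies $B_2$ with $2p + A_3(p)$ (the mod-$4$ condition flips to $j' \equiv 3 \pmod 4$ because $p \equiv 1 \pmod 4$, and Legendre symbols are preserved). This gives $\sum_k r_k = \sum_{j \in B} j = S_1(p) + 2p|A_3(p)| + S_3(p)$.

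The final ingredient is the evaluation $S_1(p) = p\,|A_1(p)|$, which I would obtain from the involution $j \mapsto 2p - j$ on $A_1(p)$: this map preserves both the residue class mod $4$ (since $p \equiv 1 \pmod 4$) and the Legendre symbol (since $(-1/p) = 1$), and has no fixed point because the only candidate $j = p$ lies outside $A_1(p)$. Each pair $\{j, 2p-j\}$ sums to $2p$, yielding $S_1(p) = p\,|A_1(p)| = p(p-1)/2 - p\,|A_3(p)|$, using $|A_1(p)| + |A_3(p)| = |B| = (p-1)/2$. Putting everything together,
\[
\sum_{k}(2p - r_k) = p(p-1) - \sum_{j \in B} j = p(p-1) - S_1(p) - 2p|A_3(p)| - S_3(p) = S_1(p) - S_3(p),
\]
and dividing by $p$ yields \eqref{4.8}. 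The main obstacle is really the careful bookkeeping of how the mod-$4$ and Legendre-symbol conditions interact under translations by $2p$ and the reflection $j \mapsto 2p-j$; both work out cleanly only because $p \equiv 1 \pmod 4$.
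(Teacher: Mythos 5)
Your proof is correct and follows essentially the same route as the paper: both reduce the claim to Lemma~\ref{lem:4.4} by showing that the $\frac{p-1}{2}$ remainders ${\rm Rem}(k^2\div 4p)$ for odd $k$ are distinct (same argument) and, via the mod-$4$ and Legendre-symbol conditions, exhaust the relevant set. The only cosmetic difference is that the paper works directly with $2p-{\rm Rem}(k^2\div 4p)$ and identifies these values with $A_1(p)\cup(-A_3(p))$, whereas you identify the remainders themselves with $A_1(p)\sqcup\bigl(2p+A_3(p)\bigr)$ and therefore need the extra (correct) observation $S_1(p)=p\,|A_1(p)|$, obtained from the involution $j\mapsto 2p-j$, to finish.
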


\begin{proof}
We will show that the summands on the right of \eqref{4.8} are exactly those of
$S_1(p)$ and $S_3(p)$ in the previous lemma. For greater ease of notation we
set $a_k:={\rm Rem}(k^2\div 4p)$. By definition, $k^2\equiv a_k\pmod{4p}$, which
implies $k^2\equiv a_k\pmod{p}$, and so $a_k\in QR(p)$. If we set $r_k:=2p-a_k$,
then 
\[
\left(\frac{r_k}{p}\right)=\left(\frac{2p-a_k}{p}\right)
=\left(\frac{-1}{p}\right)\left(\frac{a_k}{p}\right) = 1,
\]
since $p\equiv 1\pmod{4}$; therefore $r_k\in QR(p)$ as well. Next, by 
definition, we have $a_k=k^2+4pm\equiv 1\pmod{4}$ for odd $k$ and some 
$m\in{\mathbb Z}$. Since $p\equiv 1\pmod{4}$, this implies 
$r_k=2p-a_k\equiv 2-1=1\pmod{4}$.

Furthermore, we claim that the summands $r_k$ are distinct for 
$k=1, 3, 5,\ldots, p-2$. Indeed, if $r_j\equiv r_k\pmod{4p}$, then
\[
j^2\equiv k^2\pmod{4p}\quad\Leftrightarrow\quad 4p\mid(j-k)(j+k).
\]
Now $j+k$ is even, $j+k\in\{2, 4, 6,\ldots, 2p-4\}$, and thus $p\nmid j+k$.
Hence $p\mid j-k$. But $1\leq j, k\leq p-2$, and therefore $j=k$, as claimed.

We have thus shown that the summands $r_k$ consist of $\frac{p-1}{2}$ odd
integers between $-2p+1$ and $2p-1$, all are congruent to 1 (mod 4), and they
are all quadratic residues modulo $p$. But these are exactly the elements
$j\in S_1(p)$ and $-k$, where $k\in S_3(p)$. Hence \eqref{4.6} implies 
\eqref{4.8}.
\end{proof}

In the next lemma we remove the restriction ``$k$ odd" from the summation.

\begin{lemma}\label{lem:4.6}
Let $p\equiv 1\pmod{4}$ be a prime. Then
\begin{equation}\label{4.9}
h(-p)=\frac{1}{p}\sum_{k=1}^{p-1}\left(2p-{\rm Rem}(k^2\div 4p)\right).
\end{equation}
\end{lemma}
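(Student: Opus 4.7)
The plan is to show that extending the sum in Lemma~\ref{lem:4.5} from odd $k$ to all $k \in \{1,\ldots,p-1\}$ adds zero; equivalently, the contribution from even $k$ vanishes.

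First I would parametrize the even summands. Writing $k = 2m$ with $1 \le m \le (p-1)/2$, note that if $m^2 = qp + r$ with $0 \le r < p$, then $k^2 = 4m^2 = q(4p) + 4r$ with $0 \le 4r < 4p$, so
\[
\mathrm{Rem}(k^2 \div 4p) = 4\,\mathrm{Rem}(m^2 \div p).
\]
Hence the even-$k$ contribution to the right-hand side of \eqref{4.9} is
\[
\frac{1}{p}\sum_{m=1}^{(p-1)/2}\left(2p - 4\,\mathrm{Rem}(m^2 \div p)\right)
= (p-1) - \frac{4}{p}\sum_{m=1}^{(p-1)/2}\mathrm{Rem}(m^2 \div p).
\]

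Next I would evaluate the remaining sum using Lemma~\ref{lem:2.3}, which gives
\[
\sum_{m=1}^{(p-1)/2}\mathrm{Rem}(m^2 \div p) = \frac{p(p-1)}{4} + \frac{1}{2}\sum_{j=1}^{p-1} j\left(\frac{j}{p}\right).
\]
Since $p \equiv 1 \pmod 4$, the identity \eqref{2.25} collapses the second term, leaving $\frac{p(p-1)}{4}$. Substituting back, the even-$k$ contribution equals $(p-1) - (p-1) = 0$.

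Combining this with Lemma~\ref{lem:4.5} then yields \eqref{4.9}. There is no serious obstacle here; the main content was already packaged in Lemma~\ref{lem:4.5} and in the vanishing identity \eqref{2.25}, and the only calculation needed is the observation that dividing $k^2$ by $4p$ for even $k$ reduces cleanly to dividing $m^2$ by $p$.
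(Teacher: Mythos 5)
Your proposal is correct and follows essentially the same route as the paper: both reduce the even-$k$ terms via $k=2m$ and $\mathrm{Rem}(4m^2\div 4p)=4\,\mathrm{Rem}(m^2\div p)$, then invoke \eqref{2.24} together with \eqref{2.25} to show that contribution vanishes, so that Lemma~\ref{lem:4.5} gives \eqref{4.9}. No issues.
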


\begin{proof}
Comparing \eqref{4.9} with \eqref{4.8}, it remains to be shown that the sum over
all even $k$ in \eqref{4.9} vanishes. Setting $k=2j$ in this sum and dividing
both sides by 4, we see that this is equivalent to
\[
\sum_{j=1}^{\frac{p-1}{2}}\left(\frac{p}{2}-{\rm Rem}(j^2\div p)\right)=0,
\quad\hbox{or}\quad 
\sum_{j=1}^{\frac{p-1}{2}}{\rm Rem}(j^2\div p)=\frac{p(p-1)}{4}.
\]
But this last identity follows from \eqref{2.24} and \eqref{2.25}.
\end{proof}

\begin{lemma}\label{lem:4.7}
Let $p\equiv 3\pmod{4}$ be a prime, $p\neq 3$. Then
\begin{equation}\label{4.10}
\frac{1}{2p}\sum_{k=1}^{p-1}{\rm Rem}(k^2\div 4p)
= p-\frac{3}{2}+\big(\varepsilon(p)-2\big)h(-p),
\end{equation}
where $\varepsilon(p)=0$ when $p\equiv 3\pmod{8}$ and $\varepsilon(p)=1$ 
when $p\equiv 7\pmod{8}$.
\end{lemma}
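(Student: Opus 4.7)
The plan is to split $T := \sum_{k=1}^{p-1}\mathrm{Rem}(k^2\div 4p)$ according to the parity of $k$ and evaluate each part. Write $\chi(j) := \left(\tfrac{j}{p}\right)$ for brevity. For even $k=2j$ with $1\le j\le (p-1)/2$, one has $\mathrm{Rem}(4j^2\div 4p) = 4\,\mathrm{Rem}(j^2\div p)$; summing and applying Lemma~\ref{lem:2.3} together with Corollary~\ref{cor:3.3} gives the even contribution
\[
T_e := \sum_{k\text{ even}}\mathrm{Rem}(k^2\div 4p) = p(p-1)-2p\,h(-p).
\]

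For odd $k\in\{1,3,\ldots,p-2\}$, the residues $m_k := \mathrm{Rem}(k^2\div 4p)$ are pairwise distinct (since $k^2\equiv (k')^2\pmod{4p}$ forces $k\equiv\pm k'\pmod{2p}$, impossible in this range), each satisfies $m_k\equiv 1\pmod 4$ (because $k^2\equiv 1\pmod 8$), and each is a quadratic residue mod $p$. By CRT, the $p$ integers $m\in[1,4p-1]$ with $m\equiv 1\pmod 4$ form a complete residue system mod $p$; since $p\equiv 3\pmod 4$, the only one divisible by $p$ is $m=3p$, leaving $p-1$ values coprime to $p$, of which exactly $(p-1)/2$ are QRs. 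The $(p-1)/2$ distinct $m_k$ therefore exhaust this QR subset. Using the indicator $\tfrac12(1+\chi(m))$ (with $m=3p$ excluded) together with $\sum_{m\equiv 1(4),\,1\le m\le 4p-1} m = p(2p-1)$, this gives $T_o = p^2-2p+\tfrac12 U$, where
\[
U := \sum_{\substack{1\le m\le 4p-1\\ m\equiv 1\,(4)}} m\,\chi(m).
\]
Parametrize each such $m$ by its residue $j\equiv m\pmod p$, $1\le j\le p-1$: since $p\equiv 3\pmod 4$, a short calculation gives $m = j + t_j p$ with $t_j\in\{0,1,2,3\}$ when $j\equiv 1,2,3,0\pmod 4$ respectively. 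Thus
\[
U = \sum_{j=1}^{p-1} j\,\chi(j) + p\bigl(A_2+2A_3+3A_0\bigr),\qquad A_r := \sum_{\substack{1\le j\le p-1\\ j\equiv r\,(4)}}\chi(j).
\]

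The identity $\chi(p-j)=-\chi(j)$, combined with the shift $j\mapsto p-j$ sending mod-$4$ classes $(0,1,2,3)\mapsto(3,2,1,0)$, yields $A_3=-A_0$ and $A_2=-A_1$; the parenthetical then collapses to $A_0+A_2 = \sum_{j\text{ even}}\chi(j) = \chi(2)\,S$, where $S := \sum_{k=1}^{(p-1)/2}\chi(k)$. Corollary~\ref{cor:3.3} contributes $-p\,h(-p)$ for the first sum, so $U = -p\,h(-p)+p\chi(2)\,S$. The classical identity $S = (2-\chi(2))\,h(-p)$ follows from Corollary~\ref{cor:3.3} by a further splitting of $\sum j\chi(j)$ into odd and even parts (via $j\to p-j$ on one half and $j=2k$ on the other); substituting produces $U = 2p(\chi(2)-1)\,h(-p)$, which equals $0$ for $p\equiv 7\pmod 8$ and $-4p\,h(-p)$ for $p\equiv 3\pmod 8$. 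Assembling $T = T_e+T_o$ and dividing by $2p$ yields \eqref{4.10}. The main obstacle is the CRT-based identification of the odd-$k$ residue set, since it requires careful tracking of the parity-mod-$4$ structure, the excluded multiple $m=3p$, and the Legendre signs; once that bijection is clear, the rest is linear bookkeeping in the $A_r$'s tied to $h(-p)$ via the class-number formula.
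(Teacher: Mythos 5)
Your proof is correct, but it follows a genuinely different route from the paper's. The paper isolates the odd-$k$ contribution by forming the difference $S(p)=\sum_{k=1}^{p-1}{\rm Rem}(k^2\div 4p)-4\sum_{k=1}^{p-1}{\rm Rem}(k^2\div p)$, pairing the odd squares as $(p-2k)^2\equiv 3p+4k^2\pmod{4p}$ so that each term of the difference is either $3p$ or $-p$; the count of $3p$'s is the number of quadratic residues in $(0,p/4)$, which is evaluated by the quarter-interval identity $\sum_{j=1}^{(p-3)/4}\left(\frac{j}{p}\right)=\varepsilon(p)h(-p)$ quoted from Johnson--Mitchell. You instead identify the odd-$k$ remainders \emph{exactly}: they are precisely the residues $m\equiv 1\pmod 4$ in $[1,4p-1]$ that are quadratic residues mod $p$. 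This is the $p\equiv 3\pmod 4$ analogue of the paper's Lemma~\ref{lem:4.5} (which the paper states only for $p\equiv 1\pmod 4$ and, notably, does not reuse here), and your distinctness and CRT arguments for it are sound, including the observation that $m=3p$ is the unique multiple of $p$ in the class $1\pmod 4$. The subsequent bookkeeping with the partial sums $A_r$, the symmetry $A_3=-A_0$, $A_2=-A_1$ coming from $\chi(p-j)=-\chi(j)$, and the reduction to $U=2p(\chi(2)-1)h(-p)$ all check out, and the final assembly reproduces \eqref{4.10} in both congruence classes mod $8$. The trade-off: your route replaces the paper's quarter-interval character sum by the half-interval identity $\sum_{k=1}^{(p-1)/2}\left(\frac{k}{p}\right)=(2-(\frac{2}{p}))h(-p)$, which (as you indicate) is derivable from Corollary~\ref{cor:3.3} by elementary manipulation, so your argument is arguably more self-contained; the paper's pairing argument is shorter once the Johnson--Mitchell input is granted. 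One small point worth making explicit in a write-up is the handling of $m=3p$ in the indicator $\frac12(1+\chi(m))$ (it contributes $\frac{3p}{2}$ that must be subtracted before arriving at $T_o=p^2-2p+\frac12 U$); your stated value of $T_o$ shows you did this correctly, but the sentence glosses over it.
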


\begin{proof}
Combining \eqref{2.24} with \eqref{3.7} and using symmetry on the left of 
\eqref{2.24}, we get
\begin{equation}\label{4.11}
\sum_{k=1}^{p-1}{\rm Rem}(k^2\div p) = \frac{p(p-1)}{2}
-p\cdot h(-p)\qquad (p\equiv 3\pmod{4}),
\end{equation}
where $p>3$ is a prime. Hence we are done if we can evaluate the expression 
\begin{equation}\label{4.12}
S(p):=\sum_{k=1}^{p-1}{\rm Rem}(k^2\div 4p)
-4\sum_{k=1}^{p-1}{\rm Rem}(k^2\div p).
\end{equation}
To do so, we first note that Rem$((2k)^2\div 4p)=4\cdot{\rm Rem}(k^2\div p)$, 
so that
\begin{align}
S(p)&=\sum_{k=1}^{\frac{p-1}{2}}{\rm Rem}((2k-1)^2\div 4p)
-4\sum_{k=\frac{p+1}{2}}^{p-1}{\rm Rem}(k^2\div p)\label{4.13}\\
&=\sum_{k=1}^{\frac{p-1}{2}}{\rm Rem}((p-2k)^2\div 4p)
-4\sum_{k=1}^{\frac{p-1}{2}}{\rm Rem}(k^2\div p),\nonumber
\end{align}
where in both sums we have reversed the order of summation.
Next, since $p\equiv 3\pmod{4}$, we have $p^2\equiv 3p\pmod{4p}$, and thus
\begin{equation}\label{4.14}
(p-2k)^2 = p^2-4pk+4k^2 \equiv 3p+4k^2\pmod{4p}.
\end{equation}
When Rem$(4k^2\div 4p)<p$, which is equivalent to Rem$(k^2\div p)<\frac{p}{4}$,
then \eqref{4.14} gives
\begin{equation}\label{4.15}
{\rm Rem}((p-2k)^2\div 4p) - 4\cdot {\rm Rem}(k^2\div p) = 3p.
\end{equation}
On the other hand, when $p<{\rm Rem}(4k^2\div 4p)<4p$, then we have
$(p-2k)^2 = {\rm Rem}(4k^2\div 4p)-p$, and thus
\begin{equation}\label{4.16}
{\rm Rem}((p-2k)^2\div 4p) - 4\cdot {\rm Rem}(k^2\div p) = -p.
\end{equation} 
Now, the number of $k$, $1\leq k\leq\frac{p-1}{2}$, for which
Rem$(k^2\div p)<\frac{p}{4}$, is exactly the number $N$ of quadratic residues
between 1 and $p/4$, that is, 
\[
N=\sum_{\substack{j=1\\(\frac{j}{p})=1}}^{\lfloor p/4\rfloor}1
=\frac{1}{2}\sum_{j=1}^{\frac{p-3}{4}}\left(\left(\frac{j}{p}\right)+1\right)
=\frac{1}{2}\sum_{j=1}^{\frac{p-3}{4}}\left(\frac{j}{p}\right)
+\frac{p-3}{8}.
\]
The sum of the Legendre symbols on the right is known to be
\[
\sum_{j=1}^{\frac{p-3}{4}}\left(\frac{j}{p}\right) = \begin{cases}
0,& p\equiv 3\pmod{8},\\
h(-p),& p\equiv 7\pmod{8};
\end{cases}
\]
see, for instance, the particularly well-organized tables in \cite{JM}. Hence
\begin{equation}\label{4.17}
N=\frac{p-3}{8}+\frac{1}{2}\varepsilon(p)h(-p), 
\end{equation}
with $\varepsilon(p)$ as defined after \eqref{4.10}. Using the definition of
$N$ together with \eqref{4.13}, \eqref{4.15} and \eqref{4.16}, we get
\begin{equation}\label{4.18}
S(p) = 3pN-p\left(\frac{p-1}{2}-N\right) = 4pN-\frac{p(p-1)}{2}\
= -p+2p\varepsilon(p)h(-p),
\end{equation}
where we have used \eqref{4.17}. Finally, combining \eqref{4.12} with 
\eqref{4.18} and \eqref{4.11} gives
\begin{align*}
\sum_{k=1}^{p-1}{\rm Rem}(k^2\div 4p)
&= -p+2p\varepsilon(p)h(-p)+4\left(\frac{p(p-1)}{2}-ph(-p)\right)\\
&= 2p^2-3p+2p\big(\varepsilon(p)-2)h(-p),
\end{align*}
and upon dividing everything by $2p$ we get \eqref{4.10}.
\end{proof}

\begin{proof}[Proof of Proposition~\ref{prop:4.3}]
By \eqref{1.4} and \eqref{2.3} with $\nu=p$, and noting that 
$\overline{Q}_p=1$, we have
\begin{equation}\label{4.19}
f(4p)=1-\frac{p}{2}+\frac{1}{4p}\sum_{k=1}^{2p}{\rm Rem}(k^2\div 4p).
\end{equation}
Clearly, ${\rm Rem}(k^2\div 4p)=0$ for $k=2p$. Furthermore, if 
$p\equiv r\pmod{4}$, then 
$p^2=4p\frac{p-r}{4}+rp\equiv p\pmod{4p}$, and we have Rem$(p^2\div 4p)=rp$. We
also have $(2p-k)^2\equiv k^2\pmod{4p}$, and thus we can rewrite \eqref{4.19} as
\begin{equation}\label{4.20}
f(4p)=1+\frac{r}{4}-\frac{p}{2}
+\frac{1}{2p}\sum_{k=1}^{p-1}{\rm Rem}(k^2\div 4p).
\end{equation}
Finally, we rewrite \eqref{4.9} as
\[
\frac{1}{2p}\sum_{k=1}^{p-1}{\rm Rem}(k^2\div 4p) = p-1-\frac{1}{2}h(-p),
\qquad (p\equiv 1\pmod{4}).
\]
Then this and \eqref{4.10}, substituted into \eqref{4.20}, gives the desired 
identity \eqref{4.4}.
\end{proof}

\section{$f(n)$ when $n$ is a power of a prime}\label{sec:6}

We recall that Proposition~\ref{prop:5.1} dealt with squarefree integers.
The other extreme is the case of powers of a prime; see Table~2 for some 
small values.

\bigskip
\begin{center}
\begin{tabular}{|c||r|r|r|r|r|r|r|}
\hline
$\alpha$ & $4f(2^\alpha)$ & $3f(3^\alpha)$ & $f(5^\alpha)$ & $f(7^\alpha)$ & 
$f(11^\alpha)$ & $f(13^\alpha)$ & $f(17^\alpha)$ \\
\hline
1 & $-1$ & $-2$ & 0 & $-2$ & $-3$ & 0 & 0 \\
2 &   3  &   1  & 1 &   1  &   2  & 3 & 4\\
3 &   3  & $-20$& 1 & $-88$& $-336$ & 3 & 4\\
4 &   7  &   4  & 6 &   8  &   24 & 42 & 72\\
5 &  11  & $-182$& 6& $-4\,218$ & $-40\,299$ & 42 & 72 \\
6 &  27  &  13  & 31 & 57  & 266  & 549 & $1\,228$\\
7 &  51  & $-1\,640$& 31 & $-206\,000$ & $-4\,872\,192$ & 549 & $1\,228$ \\
8 &  115 & 40 & 156 & 400 & $2\,928$ & $7\,140$ & $20\,880$\\
\hline
\end{tabular}

\medskip
{\bf Table~2}: $f(p^\alpha)$ for $1\leq\alpha\leq 8$ and $2\leq p\leq 17$.
\end{center}
\bigskip

We observe that for a fixed prime $p$, the sequence
$\{f(p^\alpha)\}_{\alpha\geq 1}$ exhibits very different behaviour, depending
on whether  $p=2$, $p\equiv 1\pmod{4}$, or $p\equiv 3\pmod{4}$. 
An explanation is provided by the following main result of this section.

\begin{proposition}\label{prop:6.1}
$(a)$ For an integer $\beta\geq 1$ we have
\begin{align}
f(2^{2\beta})&= 2^{2\beta-3}-2^{\beta-2}+\frac{3}{4},\label{6.1}\\
f(2^{2\beta+1})&= 2^{2\beta-2}-2^{\beta-1}+\frac{3}{4}.\label{6.2}
\end{align}
$(b)$ When $p\equiv 1\pmod{4}$ and $\beta\geq 1$, then
\begin{equation}\label{6.3}
f(p^{2\beta})=f(p^{2\beta+1}) = \frac{1}{4}\left(p^{\beta}-1\right).
\end{equation}
$(c)$ When $p\equiv 3\pmod{4}$ and $\beta\geq 1$, then
\begin{align}
f(p^{2\beta})&= \frac{1}{4}\left(p^{\beta}-1\right)
\left(1-\frac{2}{p-1}h^*(-p)\right),\label{6.4}\\
f(p^{2\beta+1})&= \frac{-1}{4}\left(p^{\beta+1}-1\right)
\left(p^{\beta}+\frac{2}{p-1}h^*(-p)\right),\label{6.5}
\end{align}
where $h^*(-p)=h(-p)$ when $p\geq 7$, and $h^*(-3)=1/3$.
\end{proposition}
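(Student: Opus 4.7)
The plan is to handle the three parts separately. Part~(b) follows immediately from Proposition~\ref{prop:5.1}: since $p\equiv 1\pmod 4$, the integer $p^\alpha$ is a (repeated) product of primes $\equiv 1\pmod 4$, and because $Q_{p^{2\beta}}=Q_{p^{2\beta+1}}=p^\beta$, both cases collapse to $(Q_n-1)/4=(p^\beta-1)/4$.

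For part~(c), set $T(m):=\sum_{k=1}^{(m-1)/2}{\rm Rem}(k^2\div m)$ for odd $m$, and split $T(p^\alpha)$ according to whether $\gcd(k,p)=1$. For the coprime part, as $k$ ranges over $\{1,\ldots,(p^\alpha-1)/2\}$ with $\gcd(k,p)=1$, the values $k^2\bmod p^\alpha$ enumerate the quadratic residues coprime to $p$, each exactly once (Hensel lifting). Since $a\in\mathrm{QR}^*(p^\alpha)$ iff $a\bmod p\in\mathrm{QR}(p)$, these residues are precisely $\{r+jp:r\in\mathrm{QR}(p),\,0\leq j\leq p^{\alpha-1}-1\}$. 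Summing and invoking Corollary~\ref{cor:3.3} in the form $\sum_{r\in\mathrm{QR}(p)}r=\frac{p(p-1)}{4}-\frac{p}{2}h^*(-p)$ (with $h^*(-3)=1/3$ verified directly from the Legendre symbols mod~$3$) yields a coprime contribution of $\frac{(p-1)p^{2\alpha-1}}{4}-\frac{p^\alpha}{2}h^*(-p)$. For the $p\mid k$ part, substitute $k=p\ell$ so that ${\rm Rem}(k^2\div p^\alpha)=p^2\,{\rm Rem}(\ell^2\div p^{\alpha-2})$; the range $1\leq\ell\leq(p^{\alpha-1}-1)/2$ decomposes into $\frac{p-1}{2}$ full periods of length $p^{\alpha-2}$ (each summing to $2T(p^{\alpha-2})$ by the $\ell\leftrightarrow p^{\alpha-2}-\ell$ symmetry) followed by a half-period contributing $T(p^{\alpha-2})$, giving $p^3T(p^{\alpha-2})$ overall. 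This yields the recursion
\[
T(p^\alpha)=\frac{(p-1)p^{2\alpha-1}}{4}-\frac{p^\alpha}{2}h^*(-p)+p^3T(p^{\alpha-2})\qquad(\alpha\geq 2),
\]
with bases $T(1)=0$ and $T(p)=\frac{p(p-1)}{4}-\frac{p}{2}h^*(-p)$ (the latter from Lemma~\ref{lem:2.3} and Corollary~\ref{cor:3.3}). Iterating this geometric-type recurrence separately for even and odd $\alpha$ produces closed forms for $T(p^{2\beta})$ and $T(p^{2\beta+1})$; substituting into Proposition~\ref{prop:2.1}(a) with $Q_n=p^\beta$---and, for odd $\alpha$, subtracting ${\rm Rem}\bigl(((n-1)/2)^2\div n\bigr)=(n+1)/4$ exactly as in \eqref{3.8}--\eqref{3.9}---delivers \eqref{6.4} and \eqref{6.5} after routine algebra.

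For part~(a), apply Corollary~\ref{cor:2.2}, noting that $\overline{Q}_{2^{2\beta}}=\overline{Q}_{2^{2\beta+1}}=2^{\beta-1}$. Let $U(\alpha):=\sum_{k=0}^{2^\alpha-1}{\rm Rem}(k^2\div 2^\alpha)$; the symmetry $k\leftrightarrow 2^\alpha-k$ (together with $(2^{\alpha-1})^2\equiv 0\pmod{2^\alpha}$ for $\alpha\geq 2$) gives $\sum_{k=1}^{2^{\alpha-1}}{\rm Rem}(k^2\div 2^\alpha)=U(\alpha)/2$. Splitting $U(\alpha)$ by parity: even $k=2\ell$ contributes $4\cdot 2\,U(\alpha-2)=8\,U(\alpha-2)$ (two full periods of length $2^{\alpha-2}$); for odd $k$ and $\alpha\geq 3$ the odd squares mod $2^\alpha$ are precisely the $2^{\alpha-3}$ residues $\equiv 1\pmod 8$, each the image of $4$ odd roots, so summing the arithmetic progression $\{1,9,\ldots,2^\alpha-7\}$ and multiplying by $4$ gives $2^{\alpha-1}(2^{\alpha-1}-3)$. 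Hence
\[
U(\alpha)=8\,U(\alpha-2)+2^{\alpha-1}(2^{\alpha-1}-3)\qquad(\alpha\geq 3),\quad U(1)=1,\;U(2)=2,
\]
which I solve by induction on $\beta$ for $\alpha=2\beta$ and $\alpha=2\beta+1$ separately; substitution into Corollary~\ref{cor:2.2} then produces \eqref{6.1} and \eqref{6.2}.

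The main obstacle is the algebraic bookkeeping in part~(c): one must correctly absorb the exceptional case $p=3$ into the formalism via $h^*(-3)=1/3$, track the two expressions for $2\nu$ depending on $n\bmod 4$ (which alters both the upper index in \eqref{2.2} and the adjustment term), and verify that the iterated geometric sums collapse into the compact forms \eqref{6.4}--\eqref{6.5}. The recursion for powers of $2$ in part~(a) is conceptually simpler but still requires separate treatment of even and odd $\alpha$ and careful verification of the base cases.
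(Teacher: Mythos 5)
Your proposal is correct and follows essentially the same route as the paper: the authors likewise reduce everything to closed-form evaluations of the remainder sums $\sum_k \mathrm{Rem}(k^2\div p^\alpha)$ (their Lemma~\ref{lem:6.2}), proved by a two-step induction on the exponent that splits $k$ according to divisibility by $p$ (or parity when $p=2$), uses $\mathrm{Rem}((kp)^2\div p^{\alpha+2})=p^2\,\mathrm{Rem}(k^2\div p^{\alpha})$ for the divisible part, and identifies the remaining remainders as the arithmetic progressions $a_r+jp$ (respectively the residues $\equiv 1\pmod 8$) --- which is exactly your recursion unrolled, including the same $(n+1)/4$ adjustment for odd powers when $p\equiv 3\pmod 4$. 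The only real divergence is that you obtain part~(b) directly from Proposition~\ref{prop:5.1}, a legitimate shortcut, whereas the paper re-derives the corresponding remainder sums from scratch in Lemma~\ref{lem:6.2}(b).
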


The second part of \eqref{6.3} and the identity \eqref{6.5} also hold for
$\beta=0$; in this case we recover Propositions~\ref{prop:1.1} 
and~\ref{prop:1.2}, respectively.

In view of Proposition~\ref{prop:2.1} it is clear that the main ingredient
in the proof of Proposition~\ref{prop:6.1} would be the evaluation of the 
respective sums of remainders on the right of \eqref{2.3} or \eqref{2.4}. 
In fact, the following identities hold.

\begin{lemma}\label{lem:6.2}
$(a)$ For any integer $\beta\geq 1$ we have
\begin{align}
\sum_{k=1}^{2^{2\beta-1}}{\rm Rem}\left(k^2\div 2^{2\beta}\right)
&= 2^{2\beta-2}\left(2^{2\beta}-3\cdot 2^{\beta}+3\right),\label{6.6}\\
\sum_{k=1}^{2^{2\beta}}{\rm Rem}\left(k^2\div 2^{2\beta+1}\right)
&= 2^{2\beta-1}\left(2^{2\beta+1}-4\cdot 2^{\beta}+3\right).\label{6.7}
\end{align}
$(b)$ If $p\equiv 1\pmod{4}$ is a prime, then
\begin{align}
\sum_{k=1}^{\frac{p^{2\beta}-1}{2}}{\rm Rem}\left(k^2\div p^{2\beta}\right)
&=\frac{1}{4}p^{3\beta}\left(p^{\beta}-1\right)\qquad(\beta\geq 1),\label{6.8}\\
\sum_{k=1}^{\frac{p^{2\beta+1}-1}{2}}{\rm Rem}\left(k^2\div p^{2\beta+1}\right)
&=\frac{1}{4}p^{3\beta+1}\left(p^{\beta+1}-1\right)\qquad(\beta\geq 0).\label{6.9}
\end{align}
$(c)$ If $p\equiv 3\pmod{4}$ is a prime, then for $\beta\geq 1$, resp.\
$\beta\geq 0$,
\begin{align}
\sum_{k=1}^{\frac{p^{2\beta}-1}{2}}{\rm Rem}\left(k^2\div p^{2\beta}\right)
&=\frac{1}{4}p^{2\beta}\left(p^{\beta}-1\right)
\left(p^{\beta}-\frac{2}{p-1}h^*(-p)\right),\label{6.10}\\
\sum_{k=1}^{\frac{p^{2\beta+1}-1}{2}}{\rm Rem}\left(k^2\div p^{2\beta+1}\right)
&=\frac{1}{4}p^{2\beta+1}\left(p^{\beta+1}-1\right)
\left(p^{\beta}-\frac{2}{p-1}h^*(-p)\right).\label{6.11}
\end{align}
\end{lemma}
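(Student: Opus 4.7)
The plan is to prove all six identities by induction on $\beta$, using a recursive decomposition of each sum obtained by splitting $k$ according to divisibility by $p$ (parts (b), (c)) or by $2$ (part (a)); the base cases are verified directly.

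For parts (b) and (c), set $\widetilde R_\alpha:=\sum_{k=1}^{(p^\alpha-1)/2}{\rm Rem}(k^2\div p^\alpha)$ and split the sum into the unit contribution ($\gcd(k,p)=1$) and the multiples-of-$p$ contribution. For the unit part, the squaring map on the cyclic group $(\mathbb{Z}/p^\alpha\mathbb{Z})^{\ast}$ is $2$-to-$1$, and by Hensel's lemma an integer coprime to $p$ is a QR modulo $p^\alpha$ iff it is one modulo $p$; hence this contribution equals
\[
S_\alpha^{\mathrm{QR}}:=\frac{1}{2}\sum_{\substack{1\leq a<p^\alpha\\ \gcd(a,p)=1}}\!a\;+\;\frac{1}{2}\sum_{a=1}^{p^\alpha-1}\!a\!\left(\frac{a}{p}\right).
\]
The first sum evaluates to $p^{2\alpha-1}(p-1)/2$; the second, stratified by residue class modulo $p$, collapses via $\sum_{r=1}^{p-1}(r/p)=0$ to $p^{\alpha-1}\sum_{r=1}^{p-1}r(r/p)$, which is $0$ for $p\equiv 1\pmod 4$ by \eqref{2.25} and equals $-p\cdot h^{*}(-p)$ for $p\equiv 3\pmod 4$ by Corollary~\ref{cor:3.3} (with $h^{*}(-3)=1/3$ absorbing $p=3$). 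Setting $k=pj$ in the remaining contribution yields ${\rm Rem}((pj)^2\div p^\alpha)=p^2{\rm Rem}(j^2\div p^{\alpha-2})$ with $j$ ranging over $[1,(p^{\alpha-1}-1)/2]$; this range consists of $(p-1)/2$ complete periods of length $p^{\alpha-2}$ plus a final half-period $[1,(p^{\alpha-2}-1)/2]$, so the $j\leftrightarrow p^{\alpha-2}-j$ symmetry delivers a total of $p\cdot\widetilde R_{\alpha-2}$. Combining gives the recursion $\widetilde R_\alpha=S_\alpha^{\mathrm{QR}}+p^{3}\widetilde R_{\alpha-2}$, with base cases $\widetilde R_0=0$ and $\widetilde R_1$ supplied by Lemma~\ref{lem:2.3} together with \eqref{2.25} or Corollary~\ref{cor:3.3}; induction on $\beta$ then verifies \eqref{6.8}--\eqref{6.11} by direct algebraic computation.

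Part (a) follows the same strategy for $\widetilde R^{(2)}_\alpha:=\sum_{k=1}^{2^{\alpha-1}}{\rm Rem}(k^2\div 2^\alpha)$, split by parity. For $\alpha\geq 4$, the even-$k$ contribution reduces (via $k=2j$ and the $j\leftrightarrow 2^{\alpha-2}-j$ symmetry) to $8\widetilde R^{(2)}_{\alpha-2}$. For the odd-$k$ part, the structure $(\mathbb{Z}/2^\alpha\mathbb{Z})^{\ast}\cong\{\pm 1\}\times\mathbb{Z}/2^{\alpha-2}\mathbb{Z}$ gives, for $\alpha\geq 3$, a $4$-to-$1$ squaring map with image $\{1+8\ell:0\leq\ell<2^{\alpha-3}\}$, and the $k\leftrightarrow 2^\alpha-k$ symmetry shows each odd QR is hit twice by odd $k\in[1,2^{\alpha-1}]$; this yields
\[
\sum_{\substack{k\text{ odd}\\ 1\leq k\leq 2^{\alpha-1}}}{\rm Rem}(k^2\div 2^\alpha)\;=\;2\sum_{\ell=0}^{2^{\alpha-3}-1}(1+8\ell)\;=\;2^{2\alpha-3}+2^{\alpha-2}-2^\alpha,
\]
and hence the recursion $\widetilde R^{(2)}_\alpha=8\widetilde R^{(2)}_{\alpha-2}+2^{2\alpha-3}+2^{\alpha-2}-2^\alpha$. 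With base cases $\widetilde R^{(2)}_2=1$ and $\widetilde R^{(2)}_3=6$ computed directly, induction establishes \eqref{6.6} and \eqref{6.7}.

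The main obstacle is the careful bookkeeping of the partial-period contributions when decomposing the $k=pj$ sum, and, for $p=2$, keeping the small-$\alpha$ boundary cases compatible with the general recursion (which holds only for $\alpha\geq 4$). Once the recursions are established, the inductive algebraic step, though lengthy, is mechanical.
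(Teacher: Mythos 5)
Your proof is correct, and it shares the paper's overall architecture --- a two-step induction on the exponent, with the sum split according to whether $k$ is divisible by $p$ (resp.\ by $2$), the divisible part reducing to the induction hypothesis two levels down --- but you evaluate the coprime block by a genuinely different mechanism. The paper's key tool there is an elementary distinctness lemma (Lemma~\ref{lem:6.3}(b)): for fixed $r$ with $1\leq r\leq p-1$ the remainders ${\rm Rem}((jp+r)^2\div p^{\alpha+2})$ are pairwise distinct and all congruent to $a_r={\rm Rem}(r^2\div p)$ modulo $p$, hence form the full arithmetic progression $\{a_r+jp\}$, whose sum is computed directly; the class number then enters only through $\sum_r a_r$ via \eqref{4.11}. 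You instead invoke Hensel's lemma and the $2$-to-$1$ structure of squaring on $(\mathbb{Z}/p^\alpha\mathbb{Z})^{\ast}$ to identify the coprime contribution with $\sum_{a\in QR(p^\alpha)}a$, which you evaluate by the character sum $\sum_a a(\frac{a}{p})$; similarly for $p=2$ you use the standard fact that the odd squares modulo $2^\alpha$ are exactly the residues $\equiv 1\pmod 8$, where the paper derives the same conclusion from distinctness (Lemma~\ref{lem:6.3}(b),(d)) plus a counting argument. Both routes feed $h^*(-p)$ in through the same identity \eqref{3.7}/\eqref{4.11}, and I have checked that your recursions $\widetilde R_\alpha=S^{\mathrm{QR}}_\alpha+p^3\widetilde R_{\alpha-2}$ and $\widetilde R^{(2)}_\alpha=8\widetilde R^{(2)}_{\alpha-2}+2^{2\alpha-3}+2^{\alpha-2}-2^\alpha$, together with your stated base cases, do reproduce \eqref{6.6}--\eqref{6.11}, including the $p\equiv 3\pmod 4$ case that the paper only sketches. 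What your version buys is uniformity: a single closed-form recursion handles both congruence classes of $p$ and makes the appearance of the class number transparent at every level, at the cost of importing slightly heavier (though entirely standard) structure theory of the unit groups; the paper's argument is more self-contained. The only places where you should add a line of care are the degenerate step $\alpha=2$ of the recursion (where the ``period length'' is $p^{0}=1$ and the multiples-of-$p$ block vanishes, consistently with $\widetilde R_0=0$) and the explicit verification that \eqref{3.7} holds for $p=3$ with $h^*(-3)=\tfrac13$; neither is a gap.
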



With this lemma it is now straightforward, though tedious, to obtain 
Proposition~\ref{prop:6.1}.

\begin{proof}[Proof of Proposition~\ref{prop:6.1}]
(a) By definition of $\overline{Q}_n$ in Corollary~\ref{cor:2.2} we have
\[
\overline{Q}_{2^{2\beta}} = \overline{Q}_{2^{2\beta+1}} = 2^{\beta-1}.
\]
We substitute this and \eqref{6.6}, resp.\ \eqref{6.7}, into \eqref{2.3a} and 
obtain \eqref{6.1}, resp.\ \eqref{6.2}.

(b) When $n=p^{2\beta}$, resp.\ $n=p^{2\beta+1}$, we have 
$\nu=(p^{2\beta}-1)/4$, resp.\ $\nu=(p^{2\beta+1}-1)/4$, and $Q_n=p^{\beta}$
in both cases. Then \eqref{2.2} and \eqref{1.4}, together with \eqref{6.8},
resp.\ \eqref{6.9}, give both parts of \eqref{6.3}.

(c) This is very similar to part (b), except that for $n=p^{2\beta+1}$ we have
$\nu=(p^{2\beta+1}-3)/4$. We therefore require the remainder
\begin{equation}\label{6.11a}
{\rm Rem}\left(\left(\frac{p^{2\beta+1}-1}{2}\right)^2\div p^{2\beta+1}\right)
=\frac{1}{4}\left(p^{2\beta+1}+1\right)\qquad (\beta\geq 0),
\end{equation}
which can be verified by considering the difference
\[
\left(\frac{p^{2\beta+1}-1}{2}\right)^2-\frac{p^{2\beta+1}+1}{4}
=p^{2\beta+1}\cdot\frac{1}{4}\left(p^{2\beta+1}-3\right)
\equiv 0\pmod{p^{2\beta+1}},
\]
valid for $p\equiv 3\pmod{4}$. (We note in passing that this generalizes the 
congruence \eqref{3.9}.) Subtracting \eqref{6.11a} from \eqref{6.11}, we get
\[
\sum_{k=1}^{2\nu}{\rm Rem}\left(k^2\div p^{2\beta+1}\right)
=\sum_{k=1}^{\frac{p^{2\beta+1}-1}{2}}{\rm Rem}\left(k^2\div p^{2\beta+1}\right)
-\frac{p^{2\beta+1}+1}{4}.
\]
The identities \eqref{6.4} and \eqref{6.5} then follow again after some 
routine manipulations.
\end{proof}

For the proof of Lemma~\ref{lem:6.2} we require some auxiliary results, 
collected in the following lemma.

\begin{lemma}\label{lem:6.3}
Let $p$ be a prime. $(a)$ For any integers $k\geq 1$ and $\alpha\geq 1$ we have
\begin{equation}\label{6.12}
{\rm Rem}\left((kp)^2\div p^{\alpha+2}\right)
=p^2\cdot{\rm Rem}\left(k^2\div p^{\alpha}\right).
\end{equation}
$(b)$ For fixed integers $r$, $1\leq r\leq p-1$, and $\alpha\geq 0$, the 
following remainders are distinct:
\begin{equation}\label{6.13}
{\rm Rem}\left((jp+r)^2\div p^{\alpha+2}\right),\qquad
0\leq j\leq p^{\alpha+1}-1.
\end{equation}
$(c)$ For $\alpha\geq 2$ we have
\begin{equation}\label{6.14}
{\rm Rem}((2^{\alpha-1}-k)^2\div 2^\alpha)={\rm Rem}(k^2\div 2^\alpha),\quad
0\leq k\leq 2^{\alpha-1}.
\end{equation}
$(d)$ For any integers $\ell\geq 0$ and $\alpha\geq 1$ we have
\begin{equation}\label{6.15}
{\rm Rem}((2\ell+1)^2\div 2^{\alpha+2})\equiv 1\pmod{8}.
\end{equation}
\end{lemma}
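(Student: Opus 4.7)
Each of the four parts of the lemma follows from a short direct calculation in modular arithmetic, so my plan is to handle them one by one rather than try to unify them.

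For part (a), I would write $k^2 = qp^{\alpha} + r$ with $0 \le r < p^{\alpha}$, multiply through by $p^2$ to get $(kp)^2 = qp^{\alpha+2} + rp^2$, and observe that $rp^2 < p^{\alpha+2}$, so $rp^2$ is the canonical remainder. For part (b) --- which I read as applying to odd primes $p$, since the congruences $1^2 \equiv 3^2 \equiv 5^2 \equiv 7^2 \pmod{8}$ already show the claim fails for $p=2$, and in any event the lemma will only be invoked later for odd primes --- I would prove the injectivity of $j \mapsto (jp+r)^2 \bmod p^{\alpha+2}$. Assuming $(j_1p+r)^2 \equiv (j_2p+r)^2 \pmod{p^{\alpha+2}}$ and factoring the difference of squares yields
\[
p(j_1 - j_2)\bigl(p(j_1+j_2) + 2r\bigr) \equiv 0 \pmod{p^{\alpha+2}}.
\]
The second factor is coprime to $p$ since $p$ is odd and $1 \le r \le p-1$, so $p^{\alpha+1} \mid j_1 - j_2$. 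Combined with $0 \le j_1, j_2 \le p^{\alpha+1}-1$, this forces $j_1 = j_2$.

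For part (c), I would just expand $(2^{\alpha-1}-k)^2 = 2^{2\alpha-2} - 2^\alpha k + k^2$ and note that the first two terms are divisible by $2^\alpha$ whenever $\alpha \ge 2$, giving the desired congruence; since the associated remainders lie in $[0, 2^\alpha)$, the congruence is actually an equality of remainders. For part (d), I would use the elementary identity $(2\ell+1)^2 = 4\ell(\ell+1)+1$ together with the fact that $\ell(\ell+1)$ is even to conclude $(2\ell+1)^2 \equiv 1 \pmod{8}$; since $8 \mid 2^{\alpha+2}$ for $\alpha \ge 1$, reducing modulo $2^{\alpha+2}$ preserves this class, so the remainder itself is $\equiv 1 \pmod{8}$.

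The only real obstacle I anticipate is the $p=2$ asymmetry in part (b), which I would resolve simply by restricting the hypothesis there to odd primes; the rest is bookkeeping with the division algorithm and the elementary factorization of $a^2 - b^2$.
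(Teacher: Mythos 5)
Your proof is correct and follows essentially the same route as the paper's for all four parts: division with remainder for (a), the difference-of-squares factorization with $\gcd((j+k)p+2r,\,p)=1$ for (b), and the two elementary expansions for (c) and (d). Your restriction of part (b) to odd primes is a genuine and correct sharpening: the paper's own argument tacitly uses $p\nmid 2r$, which fails for $p=2$, $r=1$, and your counterexample modulo $8$ shows the statement of \eqref{6.13} is indeed false as written in that case. One caveat: your justification that the lemma ``will only be invoked for odd primes'' is not accurate --- the paper does appeal to part (b) with $p=2$ when proving \eqref{6.6}, for the remainders ${\rm Rem}((2\ell+1)^2\div 2^{2\beta+2})$. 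That application survives only because the range used there, $0\le\ell\le 2^{2\beta-1}-1$, is half of the range in \eqref{6.13}; on this shorter range distinctness follows from $2^{2\beta}\mid(\ell_1-\ell_2)(\ell_1+\ell_2+1)$ together with the observation that exactly one of these two factors is even and each has absolute value below $2^{2\beta}$. So if you restrict (b) to odd $p$, you should record this small supplementary fact for the $p=2$ application.
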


\begin{proof}
(a) By division with remainder we have
\[
{\rm Rem}(k^2\div p^{\alpha})
=k^2-\left\lfloor\frac{k^2}{p^{\alpha}}\right\rfloor\cdot 2^{\alpha}
\]
for any integer $k$. Using this twice, we get
\begin{align*}
{\rm Rem}((kp)^2\div p^{\alpha+2})
&= (kp)^2-\left\lfloor\frac{(kp)^2}{p^{\alpha+2}}\right\rfloor
\cdot 2^{\alpha+2} \\
&=p^2\cdot\left(k^2-\left\lfloor\frac{k^2}{p^{\alpha}}\right\rfloor
\cdot 2^{\alpha}\right) = p^2\cdot{\rm Rem}(k^2\div p^{\alpha}),
\end{align*}
as claimed.

(b) Suppose that two of the terms in \eqref{6.13} are identical. Then 
$p^{\alpha+2}$ divides
\[
(jp+r)^2-(kp+r)^2 = ((j+k)p+2r)\cdot(j-k)\cdot p,
\]
which means that
\[
p^{\alpha+1}\left| ((j+k)p+2r)\cdot(j-k). \right.
\]
Since $1\leq r\leq p-1$, we have gcd$((j+k)p+2r,p)=1$, and so 
$p^{\alpha+1}\mid j-k$. This, by the condition in \eqref{6.13}, can only happen
when $j=k$, which completes the proof of (b).

(c) We expand $\left(2^{\alpha-1}-k\right)^2=2^{2\alpha-2}-2^\alpha\cdot k+k^2
\equiv k^2\pmod{2^\alpha}$,
where the congruence holds when $2\alpha-2\geq 2$, which is equivalent to 
$\alpha\geq 2$. This proves part~(c).

(d) We use the fact that $(2\ell+1)^2=4\ell(\ell+1)+1\equiv1\pmod{8}$. But 
also, division with remainder gives
\[
(2\ell+1)^2 = \left\lfloor\frac{(2\ell+1)^2}{2^{\alpha+2}}\right\rfloor
\cdot 2^{\alpha+2}+{\rm Rem}((2\ell+1)^2\div 2^{\alpha+2}).
\]
Since $\alpha\geq1$, this implies the congruence \eqref{6.15}.
\end{proof}

\begin{proof}[Proof of Lemma~\ref{lem:6.2}]
(a) We prove \eqref{6.6} by induction on $\beta$. When $\beta=1$, then 
$\sum_{k=1}^2{\rm Rem}(k^2\div 4)=1+0$, while on the right of \eqref{6.6} we
have $2^0(2^2-3\cdot 2^1+3)=1$; this is the induction beginning. 

For the induction step, we use symmetry (Lemma~\ref{lem:6.3}(c)) to obtain
\begin{align}
&\sum_{k=1}^{2^{2\beta+1}}{\rm Rem}(k^2\div 2^{2\beta+2})
= 2\sum_{k=1}^{2^{2\beta}}{\rm Rem}(k^2\div 2^{2\beta+2})\label{6.16}\\
&\qquad=2\sum_{\ell=1}^{2^{2\beta-1}}{\rm Rem}((2\ell)^2\div 2^{2\beta+2})
+ 2\sum_{\ell=0}^{2^{2\beta-1}-1}{\rm Rem}((2\ell+1)^2\div 2^{2\beta+2}).\nonumber
\end{align}
We deal with the sums in the second row of \eqref{6.16} separately. First, by
\eqref{6.12} we have
\begin{equation}\label{6.17}
\sum_{\ell=1}^{2^{2\beta-1}}{\rm Rem}((2\ell)^2\div 2^{2\beta+2})
= 4\sum_{\ell=1}^{2^{2\beta-1}}{\rm Rem}(\ell^2\div 2^{2\beta})
= 2^{2\beta}\left(2^{2\beta}-3\cdot 2^{\beta}+3\right),
\end{equation}
where we have used \eqref{6.6} as induction hypothesis.

To deal with the second sum on the right of \eqref{6.16}, we note that by parts 
(b) and (d) of Lemma~\ref{lem:6.3} the $2^{2\beta-1}$ summands are distinct 
positive integers of the form $8j+1$, $0\leq j\leq 2^{2\beta-1}-1$. Since
$8j+1 < 8\cdot 2^{2\beta-1} = 2^{2\beta+2}$, these summands are not reduced
modulo $2^{2\beta+2}$, and we have
\begin{align}
\sum_{\ell=0}^{2^{2\beta-1}-1}{\rm Rem}((2\ell+1)^2\div 2^{2\beta+2})
&= \sum_{\ell=0}^{2^{2\beta-1}-1}(8j+1) \label{6.18}\\
&= 2^{2\beta-1}+8\cdot\frac{1}{2}\left(2^{2\beta-1}-1\right)2^{2\beta-1}\nonumber \\
&= 2^{2\beta-1}\left(2^{2\beta+1}-3\right).\nonumber
\end{align}
Finally, substituting \eqref{6.17} and \eqref{6.18} into \eqref{6.16}, we get 
after some straightforward manipulations,
\[
\sum_{k=1}^{2^{2\beta+1}}{\rm Rem}(k^2\div 2^{2\beta+2})
= 2^{2\beta}\left(2^{2\beta+2}-3\cdot 2^{\beta+1}+3\right).
\]
This completes the proof of \eqref{6.6} by induction. The proof of \eqref{6.7}
is analogous; we leave the details to the reader.

(b) We begin with \eqref{6.9}, using again induction on $\beta$. For $\beta=0$,
\eqref{6.9} reduces to 
\begin{equation}\label{6.18a}
\sum_{k=1}^{\frac{p-1}{2}}{\rm Rem}(k^2\div p)=\frac{1}{4}p(p-1),
\end{equation}
which is true by \eqref{2.24} and \eqref{2.25}. We now assume that \eqref{6.9}
holds for some $\beta\geq 0$. Using the fact that for any integer $\alpha\geq 1$
we have the symmetry relation
\[
\left(p^{\alpha}-k\right)^2 = p^{2\alpha}-2kp^{\alpha}+k^2
\equiv k^2\pmod{p^{\alpha}},
\]
we can write
\begin{align}
2\sum_{k=1}^{\frac{p^{2\beta+3}-1}{2}}{\rm Rem}\left(k^2\div p^{2\beta+3}\right)
&=\sum_{k=1}^{p^{2\beta+3}-1}{\rm Rem}\left(k^2\div p^{2\beta+3}\right)\label{6.19}\\
&=\sum_{r=0}^{p-1}\sum_{j=0}^{p^{2\beta+2}-1}
{\rm Rem}\left((jp+r)^2\div p^{2\beta+3}\right).\nonumber
\end{align}
We will now evaluate the inner sum on the right of \eqref{6.19}. When $r=0$,
then \eqref{6.12} gives
\begin{equation}\label{6.20}
\sum_{j=0}^{p^{2\beta+2}-1}{\rm Rem}\left((jp)^2\div p^{2\beta+3}\right)
=p^2\sum_{j=0}^{p^{2\beta+2}-1}{\rm Rem}\left(j^2\div p^{2\beta+1}\right).
\end{equation}
We split $j$ as $j=\ell\cdot p^{2\beta+1}+s$, with $0\leq\ell\leq p-1$ and
$0\leq s\leq p^{2\beta+1}-1$ and note that
\[
{\rm Rem}\left((\ell\cdot p^{2\beta+1}+s)^2\div p^{2\beta+1}\right)
={\rm Rem}\left(s^2\div p^{2\beta+1}\right).
\]
Then
\begin{align*}
\sum_{j=0}^{p^{2\beta+2}-1}{\rm Rem}\left(j^2\div p^{2\beta+1}\right)
&=\sum_{\ell=0}^{p-1}\sum_{s=0}^{p^{2\beta+1}-1}
{\rm Rem}\left(s^2\div p^{2\beta+1}\right) \\
&=2p\sum_{s=0}^{\frac{p^{2\beta+1}-1}{2}}
{\rm Rem}\left(s^2\div p^{2\beta+1}\right) \\
&= 2p\cdot\frac{1}{4}p^{3\beta+1}\left(p^{\beta+1}-1\right),
\end{align*}
where we have used symmetry and then \eqref{6.9} as induction hypothesis.
With \eqref{6.20} we now get
\begin{equation}\label{6.21}
\sum_{j=0}^{p^{2\beta+2}-1}{\rm Rem}\left((jp)^2\div p^{2\beta+3}\right)
= \frac{1}{2}p^{3\beta+4}\left(p^{\beta+1}-1\right).
\end{equation}
Next we consider $1\leq r\leq p-1$ in the inner sum in \eqref{6.19}; for 
greater generality, we consider integers $\alpha\geq 0$. To simplify notation,
we set
\begin{equation}\label{6.22}
a_r := {\rm Rem}(r^2\div p),
\end{equation}
and note that $(jp+r)^2\equiv a_r\pmod{p}$. By Lemma~\ref{lem:6.3}(b), the
terms ${\rm Rem}((jp+r)^2\div p^{\alpha+2})$ are all distinct for
$0\leq j\leq p^{\alpha+1}-1$, they are all congruent to $a_r\pmod{p}$, and lie
between 0 and $p^{\alpha+2}$. So they are $a_r+jp$, 
$0\leq j\leq p^{\alpha+1}-1$, in some order. Hence, for $1\leq r\leq p-1$,
\begin{align*}
S_r&:=\sum_{j=0}^{p^{\alpha+1}-1}{\rm Rem}((jp+r)^2\div p^{\alpha+2})
=\sum_{j=0}^{p^{\alpha+1}-1}(a_r+jp) \\
&= p^{\alpha+1}a_r + \frac{p}{2}\left(p^{\alpha+1}-1\right)p^{\alpha+1}
= p^{\alpha+1}\left(a_r+ \frac{p}{2}\left(p^{\alpha+1}-1\right)\right).
\end{align*}
Next we use the fact that by \eqref{6.22} and \eqref{6.18a} we have
$\sum_{r=1}^{p-1}a_r=\frac{1}{2}p(p-1)$, so that for $\alpha\geq 0$,
\begin{equation}\label{6.23}
\sum_{r=1}^{p-1}S_r
=p^{\alpha+1}\left(\frac{p(p-1)}{2}
+(p-1)\cdot\frac{p}{2}\left(p^{\alpha+1}-1\right)\right)
=\frac{p-1}{2}\cdot p^{\alpha+3}.
\end{equation}
Finally, upon setting $\alpha=2\beta+1$ in \eqref{6.23} and substituting this
and \eqref{6.21} into \eqref{6.19}, we get
\[
\sum_{k=1}^{\frac{p^{2\beta+3}-1}{2}}{\rm Rem}\left(k^2\div p^{2\beta+3}\right)
= \frac{1}{4}\cdot p^{3\beta+4}\left(p^{\beta+2}-1\right).
\]
Comparing this with \eqref{6.9}, we see that the proof by induction is complete.

To prove \eqref{6.8}, we first note that \eqref{6.23} with $\alpha=0$ gives
\[
\sum_{r=1}^{p-1}\sum_{j=0}^{p-1}{\rm Rem}((jp+r)^2\div p^2)
= \frac{p-1}{2}\cdot p^3,
\]
while for $r=0$ we have ${\rm Rem}((jp)^2\div p^2)=0$. Hence
\[
\sum_{k=1}^{p^2-1}{\rm Rem}(k^2\div p^2) = \frac{1}{2}\cdot p^3(p-1),
\]
which is the induction beginning for $\beta=1$ if we take symmetry into 
account. The remainder of the proof of \eqref{6.8} is completely analogous to
that of \eqref{6.9}.

(c) The proofs of the identities \eqref{6.10} and \eqref{6.11} are similar to
those of \eqref{6.8} and \eqref{6.9}, and we leave the details to the 
interested reader. However, the case $p=3$ requires some attention. Rather than
dealing with the details of Dirichlet's class number formula \eqref{3.2}, we
verify that \eqref{3.7} holds for $p=3$ if $h(-3)$ is replaced by 
$\frac{1}{3}=h^*(-3)$. Similarly, \eqref{4.11} holds for $p=3$ with $h(-3)$
replaced by $h^*(-3)$.

This last identity is then the induction beginning, with $\beta=0$, in the
proof of \eqref{6.11}. One other difference between the proofs of \eqref{6.11}
and \eqref{6.9} is that for summing the terms $a_r$ we need to use \eqref{4.11}
again, with the appropriate change for $p=3$. The proof of \eqref{6.10} is
again similar.
\end{proof}

\section{Some conjectures and remarks}\label{sec:7}

The results in this paper so far give rise to the question: What can we say
about $f(n)$ when $n$ has two or more distinct prime factors, at least one of
which is of the form $q\equiv 3\pmod{4}$? With Propositions~\ref{prop:5.1} 
and~\ref{prop:6.1} in mind, the next step would be the case 
$n=p^{\alpha}q^{\beta}$, where one or both of $p$ and $q$ are $\equiv 3\pmod{4}$
and $\alpha, \beta\geq 1$.

Extensive computations led us to formulate the following conjectures. For the
sake of simplicity and in view of Proposition~\ref{prop:2.1}, we state some
conjectured identities only for the sum
\[
S_n:=\frac{1}{n}\sum_{k=1}^{n-1}{\rm Rem}(k^2\div n).
\]
As we did in Proposition~\ref{prop:6.1}, we set $h^*(-p)=h(-p)$ when 
$p\geq 7$, and $h^*(-3)=1/3$.

\begin{conjecture}\label{conj:7.1}
Let $n=p^{\alpha}q^{\beta}$ with $p\equiv 1\pmod{4}$ and $q \equiv 3\pmod{4}$
both primes and $\alpha, \beta\geq 1$. Then
\begin{align}
S_n &= 
\frac{n-p^{\lfloor\frac{\alpha}{2}\rfloor}q^{\lfloor\frac{\beta}{2}\rfloor}}{2}
-\frac{q^{\lfloor\frac{\beta+1}{2}\rfloor}-1}{q-1}\label{7.1}\\
&\quad\times\left(\left(\frac{p^{\lfloor\frac{\alpha+2}{2}\rfloor}-1}{p-1}
-\frac{p^{\lfloor\frac{\alpha}{2}\rfloor}-1}{p-1}\left(\frac{p}{q}\right)\right)
h^*(-q)-\frac{p^{\lfloor\frac{\alpha+1}{2}\rfloor}-1}{p-1}h^*(-pq)\right).\nonumber
\end{align}
\end{conjecture}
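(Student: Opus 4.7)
My plan is a two-variable induction on $(\alpha,\beta)$ combining the scaling arguments of Section~\ref{sec:6} with a CRT--character-sum evaluation at the base case and Dirichlet's class-number formula (Theorem~\ref{thm:3.1}). The identities of Lemma~\ref{lem:6.3}(a) extend without change to mixed moduli:
\[
{\rm Rem}\bigl((kp)^2\div p^{\alpha+2}q^\beta\bigr) = p^2\,{\rm Rem}(k^2\div p^\alpha q^\beta),
\]
and analogously for $q$. Writing $T(\alpha,\beta) := \sum_{k=1}^{p^\alpha q^\beta -1}{\rm Rem}(k^2\div p^\alpha q^\beta)$ and splitting by the divisibility of $k$ by each of $p$ and $q$ (with inclusion--exclusion on multiples of $pq$) produces a recursion that reduces $T(\alpha+2,\beta)$ to $T(\alpha,\beta)$, $T(\alpha+2,\beta-2)$, $T(\alpha,\beta-2)$, and a ``primitive'' contribution over $k$ coprime to $pq$. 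By Proposition~\ref{prop:2.1}(a), an exact formula for $T(\alpha,\beta)$ delivers $S_n = T(\alpha,\beta)/n$.

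\emph{Base case via characters.} Since squaring is $4$-to-$1$ on $(\mathbb{Z}/n)^*$ by Hensel lifting, and $\mathbf{1}_{\mathrm{QR}(n)}(j) = \tfrac14(1+\chi_p(j))(1+\chi_q(j))$ with $\chi_p = (\tfrac{\cdot}{p})$, $\chi_q = (\tfrac{\cdot}{q})$, the primitive sum $W(\alpha,\beta) := \sum_{\gcd(k,pq)=1,\,1\le k<n}{\rm Rem}(k^2\div n)$ expands as a linear combination of four character sums $\sum_{\gcd(j,pq)=1,\,0\le j<n} j\,\psi(j)$ for $\psi\in\{1,\chi_p,\chi_q,\chi_p\chi_q\}$. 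Each $\psi$ is periodic mod $pq$, so grouping $j$ by residue reduces every such sum to $p^{\alpha-1}q^{\beta-1}$ times the corresponding sum over $(\mathbb{Z}/pq\mathbb{Z})^*$. The trivial piece is elementary; the $\chi_p$-piece vanishes by \eqref{2.25} since $p\equiv 1\pmod{4}$; the $\chi_q$-piece, after excluding multiples of $p$ and using Corollary~\ref{cor:3.3}, produces a multiple of $h^*(-q)$ bearing a factor $\chi_q(p)=(\tfrac{p}{q})$; and the mixed piece, for $\gcd(j,pq)=1$, coincides with the Kronecker symbol $\bigl(\tfrac{-pq}{j}\bigr)$ attached to the (fundamental) discriminant $-pq$ --- note $pq\equiv 3\pmod{4}$ forces $-pq\equiv 1\pmod{4}$ --- and is evaluated by Theorem~\ref{thm:3.1} as a multiple of $h(-pq)$.

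\emph{Iteration.} Feeding these evaluations back through the recursion, the geometric partial sums $1+p^2+\cdots+p^{2k}$ and their $q$-analogues assemble into the factors $(p^{\lfloor\cdot\rfloor}-1)/(p-1)$ and $(q^{\lfloor\cdot\rfloor}-1)/(q-1)$ of \eqref{7.1}, the floor functions reflecting the parities of $\alpha$ and $\beta$ (each inductive step is of size $2$). The factor $(\tfrac{p}{q})$ decorating $h^*(-q)$ is the one produced above, surviving only against an odd residual $p$-power in the modulus (since $\chi_q(p^{2i})=1$ while $\chi_q(p)=(\tfrac{p}{q})$). The degenerate case $q=3$ is handled by the convention $h^*(-3)=1/3$ with \eqref{4.11} adjusted accordingly, exactly as in the proof of Lemma~\ref{lem:6.2}(c).

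\emph{Main obstacle.} The principal difficulty is the sign- and coefficient-tracking needed to match the parity-dependent geometric sums and placements of $(\tfrac{p}{q})$ in \eqref{7.1} to the output of the character evaluation after iteration. A subtle point is the behaviour of $\bigl(\tfrac{-pq}{j}\bigr)$ on \emph{even} $j$ coprime to $pq$, which has to be verified case-by-case via $-pq\pmod{8}$ to confirm the identification with $\chi_p(j)\chi_q(j)$ throughout $(\mathbb{Z}/pq\mathbb{Z})^*$; only then does the mixed character sum genuinely equal a multiple of $h(-pq)$ on the nose, allowing the class-number contribution to appear cleanly in \eqref{7.1}.
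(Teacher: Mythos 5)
First, a point of order: Conjecture~\ref{conj:7.1} is exactly that --- a conjecture. The paper offers no proof of \eqref{7.1} (it is stated on the strength of ``extensive computations''), so there is no argument of the authors' to compare yours against. Judged on its own, your proposal is a plausible strategy outline rather than a proof: the base case via the factorization $\mathbf{1}_{\mathrm{QR}}=\tfrac14(1+\chi_p)(1+\chi_q)$ and Theorem~\ref{thm:3.1} is sound in outline, but the entire inductive engine --- the precise recursion relating $T(\alpha+2,\beta)$ to smaller cases, the analogue of Lemma~\ref{lem:6.3}(b) for mixed moduli, the contribution of $k$ divisible by exactly one of $p,q$, and the assembly of the parity-dependent geometric sums and of the stray factor $(\tfrac{p}{q})$ --- is deferred under ``sign- and coefficient-tracking.'' That deferred bookkeeping \emph{is} the theorem; nothing in the sketch certifies that it lands on \eqref{7.1}. (Your worry about even $j$ in $(\tfrac{-pq}{j})$ is, by contrast, a non-issue: both $(\tfrac{-pq}{\cdot})$ and $\chi_p\chi_q$ are characters modulo the odd number $pq$ and agree on odd representatives, hence everywhere on $(\mathbb{Z}/pq\mathbb{Z})^*$.)

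More seriously, if you actually execute your base case you do not get \eqref{7.1}. Carrying out your own character decomposition at $\alpha=\beta=1$ (splitting $k$ by $\gcd(k,pq)$, using \eqref{2.25} for the $\chi_p$-piece, Corollary~\ref{cor:3.3} for the $\chi_q$-piece, and Theorem~\ref{thm:3.1} for the mixed piece) yields
\[
S_{pq}=\frac{pq-1}{2}-h^*(-q)-h^*(-pq),
\]
since $\sum_{j=1}^{pq-1}j\bigl(\tfrac{-pq}{j}\bigr)=-pq\,h(-pq)$ enters with a \emph{negative} sign. Formula \eqref{7.1} at $\alpha=\beta=1$ instead reads $\tfrac{pq-1}{2}-h^*(-q)+h^*(-pq)$. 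Direct computation confirms the minus sign: for $n=35$ one finds $\sum_{k=1}^{34}\mathrm{Rem}(k^2\div 35)=490$, so $S_{35}=14=17-h(-7)-h(-35)$, whereas \eqref{7.1} predicts $18$; the cases $n=15$ and $n=39$ behave the same way. So either the conjecture as printed carries a sign error on the $h^*(-pq)$ term or your identification of the mixed character sum must somehow produce the opposite sign --- it cannot, since Dirichlet's sum is genuinely negative. Either way, your proposal as written cannot establish the statement as literally given, and a completed proof would have to begin by resolving this discrepancy.
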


\begin{conjecture}\label{conj:7.2}
Let $n=p^{\alpha}q^{\beta}$ with $p\equiv q\equiv 3\pmod{4}$ distinct primes and $\alpha, \beta\geq 1$. Then
\begin{align}
S_n =
\frac{n-p^{\lfloor\frac{\alpha}{2}\rfloor}q^{\lfloor\frac{\beta}{2}\rfloor}}{2}
&-\left(\frac{q^{\lfloor\frac{\beta}{2}\rfloor}-1}{q-1}\left(\frac{p}{q}\right)
+\frac{q^{\lfloor\frac{\beta}{2}\rfloor+1}-1}{q-1}\right)\cdot
\frac{p^{\lfloor\frac{\alpha+1}{2}\rfloor}-1}{p-1}h^*(-p) \label{7.2}\\
&-\left(\frac{p^{\lfloor\frac{\alpha}{2}\rfloor}-1}{p-1}\left(\frac{q}{p}\right)
+\frac{p^{\lfloor\frac{\alpha}{2}\rfloor+1}-1}{p-1}\right)\cdot
\frac{q^{\lfloor\frac{\beta+1}{2}\rfloor}-1}{q-1}h^*(-q). \nonumber
\end{align}
\end{conjecture}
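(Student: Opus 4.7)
The plan is to use the Chinese Remainder Theorem to reduce $S_n$ to a combination of prime-power sums governed by Lemma~\ref{lem:6.2}(c), and then to evaluate the resulting cross-interaction using Corollary~\ref{cor:3.3} and quadratic reciprocity.

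Set $a(k_1) := \mathrm{Rem}(k_1^2 \div p^\alpha)$ and $b(k_2) := \mathrm{Rem}(k_2^2 \div q^\beta)$. Since $\gcd(p^\alpha,q^\beta)=1$, the map $k \mapsto (k \bmod p^\alpha,\,k \bmod q^\beta)$ is a bijection from $\{0,\ldots,n-1\}$ onto $\{0,\ldots,p^\alpha-1\}\times\{0,\ldots,q^\beta-1\}$, and $\mathrm{Rem}(k^2\div n)$ is the unique CRT lift of $(a(k_1),b(k_2))$. Writing that lift as $a(k_1) + p^\alpha\,t(a(k_1),b(k_2))$ with $t(a,b) := \mathrm{Rem}\bigl((b-a)(p^\alpha)^{-1}\div q^\beta\bigr)$ yields
\[
nS_n \;=\; q^\beta\sum_{k_1=0}^{p^\alpha-1}a(k_1) \;+\; p^\alpha\sum_{k_1,k_2} t(a(k_1),b(k_2)),
\]
together with the symmetric identity obtained by instead writing the lift as $b + q^\beta\,s(a,b)$. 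Lemma~\ref{lem:6.2}(c) evaluates $\sum_{k_1}a(k_1)$ in closed form, producing (after multiplication by $q^\beta$) the bulk of the leading $\tfrac{n-p^{\lfloor\alpha/2\rfloor}q^{\lfloor\beta/2\rfloor}}{2}$ term together with one of the two class-number contributions; Lemma~\ref{lem:6.2}(c) on the $q$-side supplies the companion $h^*(-q)$-piece.

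The heart of the matter is the cross sum $T := \sum_{k_1,k_2} t(a(k_1),b(k_2))$. My approach would be to use the splitting $k_1 = jp + r$ from the proof of Lemma~\ref{lem:6.2} together with Lemma~\ref{lem:6.3}(b) in order to replace the opaque $a(k_1)$ by explicit arithmetic progressions, and symmetrically for $b(k_2)$. The resulting nested sums, once the inner reduction modulo $q^\beta$ is unwound, should decompose into sums of the shape $\sum_{r=1}^{p-1}r\bigl(\tfrac{r}{p}\bigr)$ attached to $\bigl(\tfrac{q}{p}\bigr)$-factors (together with the symmetric objects for $q$), which by Corollary~\ref{cor:3.3} convert directly into $h^*(-p)$ and $h^*(-q)$. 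The geometric-series factors $\tfrac{p^{\lfloor(\alpha+1)/2\rfloor}-1}{p-1}$ and $\tfrac{q^{\lfloor(\beta+1)/2\rfloor}-1}{q-1}$ appearing in the stated formula should emerge naturally from the same telescoping/inductive structure already seen in the derivation of \eqref{6.10} and \eqref{6.11}.

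The main obstacle will be to verify that no class number of $\mathbb{Q}(\sqrt{-pq})$ (discriminant $-4pq$, since $pq \equiv 1 \pmod 4$) or of the real field $\mathbb{Q}(\sqrt{pq})$ survives the analysis: the conjectured formula involves \emph{only} $h^*(-p)$ and $h^*(-q)$. A naive CRT expansion naturally produces a term attached to the composite character $\bigl(\tfrac{\cdot}{pq}\bigr)$, and the conjecture predicts that this piece must collapse entirely into the stated expression. The cancellation should be driven by the quadratic-reciprocity identity $\bigl(\tfrac{p}{q}\bigr)\bigl(\tfrac{q}{p}\bigr)=-1$, valid precisely in the $p\equiv q\equiv 3\pmod 4$ regime; tracking this collapse through the bookkeeping of the geometric-series factors is the delicate step. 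As in Proposition~\ref{prop:6.1}(c) and at the end of the proof of Lemma~\ref{lem:6.2}, the case $p=3$ or $q=3$ should be absorbed by the convention $h^*(-3)=1/3$, but will require separate verification using the modified analogue of \eqref{3.7}.
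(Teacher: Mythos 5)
This statement is labelled a \emph{conjecture} in the paper: the authors explicitly state that it arose from ``extensive computations'' and they offer no proof of \eqref{7.2}, so there is no argument of theirs to compare yours against. Your submission must therefore stand on its own, and as written it does not: it is a strategy outline in which every decisive step is deferred. The CRT bookkeeping in your first display is correct (the lift $a+p^{\alpha}t(a,b)$ is indeed the remainder modulo $n$, and the sum over $k$ does split as you say), and reducing $\sum_{k_1}a(k_1)$ to Lemma~\ref{lem:6.2}(c) is reasonable --- though note that even that lemma's part (c) is only sketched in the paper (``we leave the details to the interested reader''), so you would need to supply those details as well. But the entire difficulty of the problem is concentrated in the cross sum $T=\sum_{k_1,k_2}t(a(k_1),b(k_2))$, which you name ``the heart of the matter'' and then do not evaluate. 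The function $t(a,b)$ depends jointly on both residues through a reduction modulo $q^{\beta}$, and summing it is not a routine unwinding of arithmetic progressions: it is precisely where the character sums $\sum_j j(\tfrac{j}{\cdot})$ for the \emph{composite} modulus enter, and hence where $h^*(-pq)$ (and in principle data from the real field ${\mathbb Q}(\sqrt{pq})$) would appear.

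That last point is not a technicality you can wave at. The companion Conjecture~\ref{conj:7.1}, for $p\equiv 1$, $q\equiv 3\pmod 4$, \emph{does} contain a surviving $h^*(-pq)$ term, so the claim that this contribution collapses entirely in the $p\equiv q\equiv 3\pmod 4$ case is a substantive assertion, not a formality; invoking $(\tfrac{p}{q})(\tfrac{q}{p})=-1$ gives a plausible mechanism but no proof. Until you (i) evaluate $T$ explicitly, (ii) exhibit the cancellation of the composite-modulus class-number contribution, and (iii) check that the resulting geometric-series coefficients match the precise floors $\lfloor\tfrac{\alpha}{2}\rfloor$, $\lfloor\tfrac{\alpha+1}{2}\rfloor$, $\lfloor\tfrac{\beta}{2}\rfloor+1$, etc.\ appearing in \eqref{7.2}, this remains a research plan rather than a proof. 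A more modest but genuinely attainable goal along your lines would be the case $\alpha=\beta=1$, i.e.\ the identity \eqref{7.3}, where the prime-power induction disappears and only the CRT cross term must be handled.
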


In the special case $\alpha=\beta=1$, \eqref{7.2} becomes
\begin{equation}\label{7.3}
S_{pq} = \frac{pq-1}{2} - h^*(-p) - h^*(-q)
\qquad(p\equiv q\equiv 3\pmod{4}\quad\hbox{distinct}),
\end{equation}
and Proposition~\ref{prop:2.1} and \eqref{1.4} give, after some effort,
\begin{equation}\label{7.4}
f(pq) = -\frac{h^*(-p)+h^*(-q)}{2}
\qquad(p\equiv q\equiv 3\pmod{4}\quad\hbox{distinct}).
\end{equation}

The next conjecture concerns all squarefree odd positive integers and can be
seen as the opposite extreme of Proposition~\ref{prop:6.1}.

\begin{conjecture}\label{conj:7.3}
Let $n$ be an odd squarefree positive integer.\\
$(a)$ If $n\equiv 1\pmod{4}$, then
\begin{equation}\label{7.5}
f(n) = -\frac{1}{2}\sum_{\substack{d|n\\d\equiv 3 (\rm{mod}\,4)}}h^*(-d).
\end{equation}
$(b)$ If $n\equiv 3\pmod{4}$, then
\begin{equation}\label{7.6}
f(n) = \frac{1-n}{4} 
-\frac{1}{2}\sum_{\substack{d|n\\d\equiv 3 (\rm{mod}\,4)}}h^*(-d).
\end{equation}
\end{conjecture}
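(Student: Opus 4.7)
My plan is to unify the two cases by showing that both reduce to a single identity for
\[
S_n := \frac{1}{n} \sum_{k=1}^{n-1} {\rm Rem}(k^2 \div n).
\]
I would apply Proposition~\ref{prop:2.1}(a) with $Q_n = 1$ (since $n$ is squarefree), together with \eqref{1.4}, to express $f(n)$ in terms of $\sum_{k=1}^{2\nu} {\rm Rem}(k^2 \div n)$. Using the symmetry ${\rm Rem}((n-k)^2 \div n) = {\rm Rem}(k^2 \div n)$, that truncated sum equals $\frac{n}{2} S_n$ in case~(a), and $\frac{n}{2} S_n - \frac{n+1}{4}$ in case~(b), where the correction term arises from ${\rm Rem}\bigl(\bigl(\tfrac{n-1}{2}\bigr)^2 \div n\bigr) = \frac{n+1}{4}$ for $n \equiv 3 \pmod{4}$ (exactly the calculation behind \eqref{3.9}). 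A short, routine rearrangement then shows that both \eqref{7.5} and \eqref{7.6} are equivalent to the uniform identity
\[
S_n = \frac{n-1}{2} - \sum_{\substack{d \mid n \\ d \equiv 3\,(\mathrm{mod}\,4)}} h^*(-d).
\]

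The heart of the argument is a Chinese Remainder Theorem computation of $\sum_{k=0}^{n-1} {\rm Rem}(k^2 \div n)$. For $n = p_1 \cdots p_s$ squarefree odd, the number of $k \in \{0, 1, \ldots, n-1\}$ with $k^2 \equiv r \pmod{n}$ equals $\prod_i \bigl(1 + \bigl(\tfrac{r}{p_i}\bigr)\bigr)$ (with the convention $\bigl(\tfrac{0}{p}\bigr) = 0$), and this product expands as the Jacobi-symbol sum $\sum_{d \mid n} \bigl(\tfrac{r}{d}\bigr)$. Swapping the order of summation yields
\[
\sum_{k=0}^{n-1} {\rm Rem}(k^2 \div n) = \sum_{d \mid n} \sum_{r=0}^{n-1} r \left(\frac{r}{d}\right).
\]
The $d = 1$ term is $\frac{n(n-1)}{2}$. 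For each $d > 1$, writing $r = t + sd$ with $0 \le t < d$ and $0 \le s < n/d$, and using $\sum_{t=0}^{d-1} \bigl(\tfrac{t}{d}\bigr) = 0$, the inner sum collapses to $\frac{n}{d} T(d)$, where $T(d) := \sum_{t=0}^{d-1} t \bigl(\tfrac{t}{d}\bigr)$.

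It remains to evaluate $T(d)$ for $d > 1$ odd squarefree. A count of prime factors shows $\bigl(\tfrac{-1}{d}\bigr) = +1$ exactly when $d \equiv 1 \pmod{4}$; in that case $\bigl(\tfrac{\cdot}{d}\bigr)$ is an even character, so the change of variable $t \mapsto d-t$ (combined with the vanishing of $\sum \bigl(\tfrac{t}{d}\bigr)$) gives $T(d) = 0$. When $d \equiv 3 \pmod{4}$, both $\bigl(\tfrac{\cdot}{d}\bigr)$ and the Kronecker symbol $\bigl(\tfrac{-d}{\cdot}\bigr)$ are primitive real characters modulo $d$, hence coincide; a reciprocity computation mirroring the one in the proof of Corollary~\ref{cor:3.3}, but now applied to arbitrary squarefree $d$, confirms this. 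Dirichlet's formula \eqref{3.2} then gives $T(d) = -d\, h^*(-d)$, with the convention $h^*(-3) = 1/3$ of Lemma~\ref{lem:6.2} absorbing the case $d = 3$. Collecting contributions produces $\sum_{k=0}^{n-1} {\rm Rem}(k^2 \div n) = \frac{n(n-1)}{2} - n \sum_{d \mid n,\ d \equiv 3\,(4)} h^*(-d)$, and dividing by $n$ gives the formula for $S_n$.

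The main obstacle is verifying the Jacobi-Kronecker identification $\bigl(\tfrac{\cdot}{d}\bigr) = \bigl(\tfrac{-d}{\cdot}\bigr)$ at the full squarefree level (rather than merely the prime level of Corollary~\ref{cor:3.3}), and carrying the two different reductions in cases (a) and (b) cleanly through the initial steps. Conceptually, the result becomes transparent once one sees that the parity $\bigl(\tfrac{-1}{d}\bigr)$ is the sole discriminator: even real Dirichlet characters contribute nothing to the first moment, while odd ones are precisely those attached to imaginary quadratic fields and are measured by class numbers.
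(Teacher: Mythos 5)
The paper offers no proof of this statement: it is stated as Conjecture~\ref{conj:7.3} and left open, supported only by special cases (Proposition~\ref{prop:1.1}, \eqref{5.1a}, \eqref{7.4}) and numerical evidence. So there is nothing in the paper to compare your argument against; instead I have checked your argument on its own terms, and it appears to be a correct and essentially complete proof of the conjecture. The reduction of $f(n)$ to $S_n$ via Proposition~\ref{prop:2.1}(a) with $Q_n=1$ works exactly as you say: carrying the algebra through gives $f(n)=\frac{1-n}{4}+\frac{1}{2}S_n$ when $n\equiv 1\pmod 4$ and $f(n)=\frac{1-n}{2}+\frac{1}{2}S_n$ when $n\equiv 3\pmod 4$ (the extra $-\frac{n+1}{4n}$ coming from the term $k=\frac{n-1}{2}$, as in \eqref{3.9}), and both displays then reduce to your uniform identity $S_n=\frac{n-1}{2}-\sum_{d\mid n,\,d\equiv 3\,(4)}h^*(-d)$. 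The CRT count $\#\{k:k^2\equiv r\}=\prod_i\bigl(1+\bigl(\tfrac{r}{p_i}\bigr)\bigr)=\sum_{d\mid n}\bigl(\tfrac{r}{d}\bigr)$ is right (including $p_i\mid r$ with the convention $\bigl(\tfrac{0}{p}\bigr)=0$), the collapse of the inner sum to $\frac{n}{d}T(d)$ uses only that $\bigl(\tfrac{\cdot}{d}\bigr)$ is a nontrivial character mod $d$ for squarefree $d>1$, and the dichotomy $T(d)=0$ versus $T(d)=-d\,h^*(-d)$ according to $\bigl(\tfrac{-1}{d}\bigr)=\pm1$ follows from the evenness argument and from Dirichlet's formula \eqref{3.2} once one knows $\bigl(\tfrac{t}{d}\bigr)=\bigl(\tfrac{-d}{t}\bigr)$; the $w(-3)=6$ factor is exactly what forces $h^*(-3)=\frac13$. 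I verified the end result numerically for $n=15$ (both sides equal $-\frac{14}{3}$) and $n=21$ (both sides equal $-\frac23$).

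Two points deserve tightening when you write this up. First, the sentence ``both are primitive real characters modulo $d$, hence coincide'' is not a proof as stated; it happens to be valid for odd squarefree $d$ (the only real character mod $d=p_1\cdots p_s$ of full conductor is $\prod_i\bigl(\tfrac{\cdot}{p_i}\bigr)$), but you should either prove that uniqueness or, as you suggest, run the Jacobi-reciprocity computation $\bigl(\tfrac{-d}{t}\bigr)=\bigl(\tfrac{-1}{t}\bigr)\bigl(\tfrac{d}{t}\bigr)=(-1)^{\frac{t-1}{2}}\cdot(-1)^{\frac{t-1}{2}\frac{d-1}{2}}\bigl(\tfrac{t}{d}\bigr)=\bigl(\tfrac{t}{d}\bigr)$ for odd $t$ coprime to $d$, extended to even $t$ by multiplicativity and periodicity. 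Second, your $d$ ranges over \emph{all} divisors $>1$ of $n$, not only the prime ones, so the appeal to Dirichlet's formula is at the level of general fundamental discriminants $-d$, not just $-p$ as in Corollary~\ref{cor:3.3}; this is fine but should be said explicitly. Your approach is genuinely different in spirit from the paper's machinery (which builds up from primes and prime powers via Lemmas~\ref{lem:4.1}, \ref{lem:5.2} and \ref{lem:6.2}): the character-sum decomposition handles all squarefree odd $n$ at once and, as a bonus, immediately explains formula \eqref{7.6d} as M\"obius inversion of the divisor sum you derived.
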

Special cases of this conjecture include Proposition~\ref{prop:1.1}, the first 
identity of \eqref{5.1a}, and \eqref{7.4}.
The two identities \eqref{7.5} and \eqref{7.6} can be written jointly as
\begin{equation}\label{7.6a}
\frac{1-n}{4}\cdot\delta(n)-f(n)=\sum_{d|n}h^*(-d)\cdot\frac{\delta(d)}{2},
\end{equation}
where $n$ is a squarefree odd positive integer and
\begin{equation}\label{7.6b}
\delta(n)=\begin{cases}
1 & \hbox{if}\;n\equiv 3\pmod{4},\\
0 & \hbox{otherwise}.
\end{cases}
\end{equation}
We now extend the conjectured identity \eqref{7.6a} to all positive integers
$n$ by defining $\mathcal{F}(n)$ to be the left-hand side of \eqref{7.6a} when
$n$ is squarefree and odd, and to be the right-hand side of \eqref{7.6a} 
otherwise. Using the M\"obius inversion formula, we then get
\begin{equation}\label{7.6c}
h^*(-n)\cdot\frac{\delta(n)}{2}
=\sum_{d|n}\mu(\tfrac{n}{d})\mathcal{F}(d).
\end{equation}
When $n$ is squarefree and odd, then so are all divisors $d$ of $n$ 
and for $\mathcal{F}(d)$ we can
use the left-hand side of \eqref{7.6a}. The identity \eqref{7.6c} therefore
shows that the truth of Conjecture~\ref{conj:7.3} implies that the following
is also true.

\begin{conjecture}\label{conj:7.4}
Let $n$ be an odd squarefree positive integer. Then
\begin{equation}\label{7.6d}
h^*(-n)\cdot\delta(n)
=\sum_{d|n}\mu(\tfrac{n}{d})\left(\frac{1-d}{2}\cdot\delta(d)-2f(d)\right),
\end{equation}
where $\delta(n)$ is as defined in \eqref{7.6b}.
\end{conjecture}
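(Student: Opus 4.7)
The plan is to derive Conjecture~\ref{conj:7.4} as a conditional consequence of Conjecture~\ref{conj:7.3} via Möbius inversion, following the skeleton already sketched in the paragraph preceding the statement. The key observation is that if \eqref{7.6a} holds for every squarefree odd $n$, then the quantity
\[
\mathcal{F}(n) := \sum_{d\mid n} h^*(-d)\cdot\frac{\delta(d)}{2}
\]
agrees with $\frac{1-n}{4}\cdot\delta(n)-f(n)$ whenever $n$ is squarefree and odd. Since $\mathcal{F}$ is defined as a sum of an arithmetic function $g(d):=h^*(-d)\delta(d)/2$ over divisors of $n$, classical Möbius inversion gives
\[
g(n) = h^*(-n)\cdot\frac{\delta(n)}{2} = \sum_{d\mid n}\mu(n/d)\,\mathcal{F}(d),
\]
valid for every positive integer $n$.

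Now I would restrict to the case in which $n$ is odd and squarefree. In this situation every divisor $d$ of $n$ is again odd and squarefree, so Conjecture~\ref{conj:7.3} applies to each $d\mid n$, allowing the substitution $\mathcal{F}(d) = \frac{1-d}{4}\cdot\delta(d)-f(d)$ inside the Möbius sum. This yields
\[
h^*(-n)\cdot\frac{\delta(n)}{2}
=\sum_{d\mid n}\mu(n/d)\left(\frac{1-d}{4}\cdot\delta(d)-f(d)\right),
\]
and multiplying through by $2$ produces precisely \eqref{7.6d}.

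There is really no hard step here: once Conjecture~\ref{conj:7.3} is granted, the argument is a one-line application of Möbius inversion. The only subtle point worth emphasizing in the write-up is that the identification $\mathcal{F}(d)=\frac{1-d}{4}\delta(d)-f(d)$ is only guaranteed on squarefree odd $d$, which is exactly the range over which the divisor sum runs when $n$ itself is squarefree and odd — so no extension of Conjecture~\ref{conj:7.3} beyond its stated hypothesis is required. The genuine obstacle, of course, lies upstream: proving Conjecture~\ref{conj:7.3} itself, for which the present conditional derivation of Conjecture~\ref{conj:7.4} offers no help.
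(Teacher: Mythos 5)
Your derivation is correct and is essentially identical to the paper's own: both define the divisor-sum function $\mathcal{F}$, invoke M\"obius inversion to recover $h^*(-n)\delta(n)/2$, and then use the fact that every divisor of a squarefree odd $n$ is again squarefree and odd to substitute the conjectured expression $\frac{1-d}{4}\delta(d)-f(d)$ from Conjecture~\ref{conj:7.3}. Like the paper, you obtain only a conditional implication (Conjecture~\ref{conj:7.3} implies Conjecture~\ref{conj:7.4}), which is all that is claimed.
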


This means that, conjecturally, all class numbers $h^*(-n)$ for odd squarefree
integers $n>0$ can be written in terms of the sum \eqref{2.1}. 
Proposition~\ref{prop:1.2} is a special case.

\medskip
Finally, if we plot $f(n)$, as defined in \eqref{1.4}, some striking 
distributions become apparent; see Figure~1. Upon closer examination, this 
leads to the following conjecture.

\bigskip
\begin{center}
\includegraphics[scale=0.4]{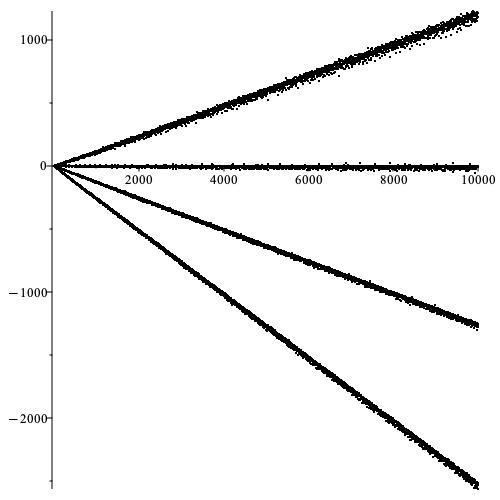}

\medskip
{\bf Figure~1}: $f(n)$ for $1\leq n\leq 10\,000$.
\end{center}
\medskip

\begin{conjecture}\label{conj:7.5}
The sequence $f(n)$ satisfies the following limits:
\begin{align}
\lim_{n\to\infty}\frac{f(4n)}{4n} & = \frac{1}{8},\label{7.7}\\
\lim_{n\to\infty}\frac{f(4n+1)}{4n+1} & = 0,\label{7.8}\\
\lim_{n\to\infty}\frac{f(4n+2)}{4n+2} & = -\frac{1}{8},\label{7.9}\\
\lim_{n\to\infty}\frac{f(4n+2)}{4n+3} & = -\frac{1}{4}.\label{7.10}
\end{align}
\end{conjecture}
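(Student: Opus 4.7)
The plan is to isolate the dominant contribution to $f(n)$ and reduce the whole conjecture to a single equidistribution statement for the values $k^2\bmod n$. Writing $n=4\nu+r$ with $r\in\{0,1,2,3\}$, a direct expansion of the explicit polynomial portions of \eqref{2.2} (for $r=1,2,3$) and \eqref{2.3a} (for $r=0$) in powers of $n$, together with the trivial bounds $Q_n\le\sqrt{n}$ and $\overline{Q}_n\le\tfrac{1}{2}\sqrt{n}$, produces
\[
f(n)\;=\;-\frac{n(1+r)}{8}\;+\;\frac{1}{n}\sum_{k=1}^{2\nu}{\rm Rem}(k^2\div n)\;+\;O(\sqrt{n}).
\]
Dividing by $n$, each of the four limits reduces to the single claim
\[
\frac{1}{n^2}\sum_{k=1}^{2\lfloor n/4\rfloor}{\rm Rem}(k^2\div n)\;\longrightarrow\;\frac{1}{4}\qquad(n\to\infty),
\]
since then $f(n)/n\to\tfrac{1}{4}-\tfrac{1+r}{8}=\tfrac{1-r}{8}$, which matches $1/8$, $0$, $-1/8$, $-1/4$ for $r=0,1,2,3$ respectively. (We remark in passing that \eqref{7.10} appears to contain a typographical slip: the numerator should read $f(4n+3)$, so that the denominator matches.)

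To prove the key estimate, I would first use the symmetry $(n-k)^2\equiv k^2\pmod{n}$ to rewrite the partial sum as one half of the complete sum $\sum_{k=0}^{n-1}{\rm Rem}(k^2\div n)$ plus boundary corrections of size $O(n)$, which are negligible after division by $n^2$. The problem then becomes
\[
\sum_{k=0}^{n-1}\Bigl(\Bigl\{\tfrac{k^2}{n}\Bigr\}-\tfrac{1}{2}\Bigr)\;=\;o(n),
\]
i.e.\ that the average fractional part of $k^2/n$ tends to $1/2$. I would establish this via a Fourier expansion of the sawtooth $\{x\}-\tfrac{1}{2}$, which reduces everything to bounds on the quadratic Gauss sums $G(h,n):=\sum_{k=0}^{n-1}e^{2\pi ihk^2/n}$. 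The classical estimate $|G(h,n)|\le\sqrt{n\gcd(h,n)}$ (with equality when $\gcd(h,n)=1$ and $n$ is odd), combined with the $1/h$ decay of the Fourier coefficients, a dyadic truncation at height $H=\sqrt{n}$, and the divisor bound $d(n)=n^{o(1)}$, yields an overall error of $O(n^{1/2+\varepsilon})$, comfortably $o(n)$.

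The principal obstacle is the Fourier-tail analysis: the sawtooth is discontinuous, and the $\sqrt{\gcd(h,n)}$ growth of Gauss sums for composite or even $n$ must be weighed carefully against the $1/h$ decay of the Fourier coefficients. This is cleanly handled by the Erd\H{o}s--Tur\'an discrepancy inequality (or its refinement due to Vaaler), combined with a dyadic partition in $h$ and the divisor bound to absorb the contribution from nontrivial common divisors. Substituting the resulting equidistribution estimate back into the displayed formula for $f(n)$ immediately yields each of the four limits \eqref{7.7}--\eqref{7.10} and completes the proof of Conjecture~\ref{conj:7.5}.
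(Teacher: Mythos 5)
This statement is left as a conjecture in the paper: the authors do not prove it, but only support the four limits by the exact evaluations obtained for special families ($p$, $p^\alpha$, $2p$, $4p$, products of primes $\equiv 1\pmod 4$, etc.) together with the bound $h^*(-n)=O(\sqrt{n}\log n)$. Your argument is of a genuinely different character and, as far as I can see, actually settles the conjecture in full generality. Your reduction is correct: expanding \eqref{2.2} with $\nu=(n-r)/4$ gives $2\nu^2-\frac{\nu}{3n}(8\nu^2+6\nu+1)=\frac{n^2}{12}-\frac{(1+r)n}{8}+O(1)$, and together with $Q_n\le\sqrt n$, $\overline{Q}_n\le\frac12\sqrt n$ this yields exactly your displayed formula $f(n)=-\frac{(1+r)n}{8}+\frac1n\sum_{k\le 2\nu}{\rm Rem}(k^2\div n)+O(\sqrt n)$, so all four limits do collapse to the single claim that the mean of $\{k^2/n\}$ over a period tends to $\frac12$ (and you are right that \eqref{7.10} has a typo in the numerator). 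The equidistribution step is standard but the details deserve one sentence of care: for $d=\gcd(h,n)$ one has $G(h,n)=d\,G(h/d,n/d)$ and hence $|G(h,n)|\le\sqrt{2dn}$, so that $\sum_{h\le H}|G(h,n)|/h\ll\sqrt n\,\log H\sum_{d\mid n}d^{-1/2}\ll n^{1/2+\varepsilon}\log H$; Erd\H{o}s--Tur\'an with $H=n$ then gives discrepancy $O(n^{-1/2+\varepsilon})$, and Koksma's inequality (total variation $1$ for $x\mapsto x$) converts this into $\sum_{k=0}^{n-1}(\{k^2/n\}-\tfrac12)=O(n^{1/2+\varepsilon})$, which also absorbs the $O(\sqrt n)$ terms with $n\mid k^2$ where the sawtooth series misbehaves. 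The boundary corrections in passing from the half-period sum to the full period are $O(n)$ and harmless. In short: where the paper offers only consistency checks against its exact formulas, your route proves the statement, and with the quantitative error $O(n^{-1/2+\varepsilon})$ it even gives a rate of convergence; it would be worth writing it up as a theorem rather than leaving it as a conjecture.
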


The four limits in Conjecture~\ref{conj:7.5} are supported by the following 
proven or conjectured identities:

\begin{enumerate}
\item[(7.11):] \eqref{4.4}, \eqref{6.1}, \eqref{6.2};
\item[(7.12):] \eqref{1.5}, \eqref{5.1a}(i), \eqref{6.3}, \eqref{6.4}, 
\eqref{7.4}, \eqref{7.5};
\item[(7.13):] \eqref{5.1a}(ii), \eqref{4.3};
\item[(7.14):] \eqref{1.6}, \eqref{6.5}, \eqref{7.6}.
\end{enumerate}

In several of these cases we need the well-know fact that 
$h^*(-n) = O(\sqrt{n}\log{n})$; see, e.g., \cite[p.~138]{Co1}.


\end{document}